\renewcommand{\Re}{\operatorname{Re}}
\renewcommand{\div}{\operatorname{div}}
\DeclareMathOperator{\Hom}{Hom}
\DeclareMathOperator{\spec}{Spec}
\DeclareMathOperator{\Bl}{Bl}
\DeclareMathOperator{\rank}{rank}
\DeclareMathOperator{\mult}{mult}
\DeclareMathOperator{\mon}{mon}
\DeclareMathOperator{\coker}{coker}
\DeclareMathOperator{\pr}{pr}
\renewcommand\d{\,\mathrm d}
\def\Proj{{\mathbf{Proj}}}
\DeclareMathOperator{\Pic}{Pic}
\DeclareMathOperator{\Eff}{Eff}
\DeclareMathOperator{\Br}{Br}
\newcommand\ddfrac[2]{\frac{\displaystyle #1}{\displaystyle #2}}
\newtheorem{theorem}{Theorem}
\newtheorem{proposition}[theorem]{Proposition}
\newtheorem{lemma}[theorem]{Lemma}
\newtheorem{corollary}[theorem]{Corollary}
\theoremstyle{definition}
\newtheorem{definition}[theorem]{Definition}
\newtheorem{example}[theorem]{Example}
\newtheorem{remark}[theorem]{Remark}
\newtheorem{assumption}[theorem]{Assumption}
\newtheorem{notation}[theorem]{Notation}
\numberwithin{theorem}{section}
\numberwithin{equation}{section}
\newcommand{\cC}{\mathcal{C}}
\newcommand{\cO}{\mathcal{O}}
\newcommand{\cD}{\mathcal{D}}
\newcommand{\cU}{\mathcal{U}}
\newcommand{\cM}{\mathcal{M}}
\newcommand{\fM}{\mathfrak{M}}
\newcommand{\cX}{\mathcal{X}}
\newcommand{\cY}{\mathcal{Y}}
\newcommand{\cZ}{\mathcal{Z}}
\newcommand{\cG}{\mathcal{G}}
\newcommand{\cK}{\mathcal{K}}
\newcommand{\cHom}{\mathcal{H}\! \mathit{om}}
\newcommand{\bfm}{\mathbf{m}}
\newcommand{\bfs}{\mathbf{s}}
\newcommand{\bfd}{\mathbf{d}}
\newcommand{\bfx}{\mathbf{x}}
\newcommand{\bfe}{\mathbf{e}}
\newcommand{\bfalpha}{\boldsymbol{\alpha}}
\newcommand{\bfbeta}{\boldsymbol{\beta}}
\newcommand{\red}{\textnormal{red}}
\DeclareMathOperator{\Div}{Div}
\DeclareMathOperator{\Volume}{Volume}
\newcommand{\lParen}{(\!(}
\newcommand{\rParen}{)\!)}
\renewcommand{\Im}{\mathrm{Im}\,}
\DeclareFontFamily{U}{wncy}{}
\DeclareFontShape{U}{wncy}{m}{n}{<->wncyr10}{}
\DeclareSymbolFont{mcy}{U}{wncy}{m}{n}
\DeclareMathSymbol{\Sh}{\mathord}{mcy}{"58} 
\DeclareMathSymbol{\B}{\mathord}{mcy}{"42}
\begin{document}
	\title{$\cM$-points of bounded height on toric varieties}
	\author{Boaz Moerman}
	
\subjclass[2020]{Primary 11D45; Secondary 11P21, 11G50, 14M25}
	
\begin{abstract}
We establish an asymptotic formula for the number of $\cM$-points of bounded height on split toric varieties, for the height induced by any big and nef divisor class. This formula establishes new cases of the extension of Manin's conjecture to $\cM$-points, as introduced by the author. As a special case of our result, we strengthen the results obtained by Pieropan and Schindler on Campana points of bounded height on toric varieties. As another special case, we obtain an asymptotic for the number of weak Campana points of bounded height, which is novel even for projective space. We illustrate this result by giving an asymptotic for the number of points on projective space of bounded height for which the product of coordinates is powerful.

Finally, we derive an asymptotic for the number of rational points in the image of a toric rational map, in the spirit of the Loughran-Smeets conjecture.
\end{abstract}
	\maketitle
	\tableofcontents
	
	\section{Introduction}
In this article, we study the distribution of $\cM$-points on split toric varieties, such as projective spaces. Concretely, we study the set of rational points $(a_1:\dots:a_n)$ for which the tuple of $p$-adic valuations $(v_p(a_1),\dots,v_p(a_n))$ lies in $\fM$ for all prime numbers $p$, for a given set $\fM\subset \mathbb{N}^n$. Many arithmetically interesting sets of rational points are obtained for various choices of the set $\fM$. For instance, the set of points $(a_1:\dots:a_n)$ with $a_1,\dots,a_n$ cubefree, or the set of points for which the product $a_1\dots a_n$ is a squareful number.

In this article, we obtain an asymptotic for the number of $\cM$-points of bounded height on split toric varieties, in the spirit of Manin's conjecture. Since its introduction in the 1980's \cite{FMT89}, Manin's conjecture has inspired many developments in arithmetic geometry and arithmetic statistics, and it has received various refinements \cite{BaMa90, Pey95, BaTs98, LeSeKuSh22}. This conjecture predicts a precise asymptotic for the number of rational points of bounded height on rationally connected varieties.

A few years ago, Manin's conjecture was extended by Pieropan, Smeets, Tanimoto and Várilly-Alvarado to a conjecture on Campana points of bounded height \cite{PSTVA21}. This conjecture has attracted a lot of interest, and various cases of it have been proven \cite{Val12,Xia21,BBKOPW23,PiSc23,PiSc24,CLTT24}.
 In a recent work \cite{Moe25conjecture}, the author further extended Manin's conjecture to the broader setting of $\cM$-points. This conjecture gives a general framework for various results on Campana points and Darmon points which fall outside of the scope of the previous conjectures \cite{Str21, ShSt24,Ito25,Ara25}. In this article, we will prove the latter conjecture for $\cM$-points of bounded height on split toric varieties over $\mathbb{Q}$. Such varieties have proved to be a fertile testing ground for Manin's conjecture and related conjectures, see e.g. \cite{Sal98,Ess07, Hua22, PiSc23,PiSc24, Bon25}.


Let $X$ be a smooth proper split toric variety over $\mathbb{Q}$, with torus invariant prime divisors $D_1,\dots,D_n$. Let $\fM\subset \mathbb{N}^n$ be a subset containing the origin and some positive multiple of each basis element $\mathbf{e}_1,\dots,\mathbf{e}_n$, let $(X,M)$ be the corresponding proper toric pair with toric integral model $(\cX,\cM)$ as in Definition \ref{def: toric pair}.
For an integer $S\geq 1$, we write
$$(\cX,\cM)(\mathbb{Z}[\tfrac{1}{S}])=\{P\in X(\mathbb{Q})\mid \mult_p(P)\in\fM\text{ for all prime numbers } p\nmid S\}$$
for the set of $\cM$-points over $\mathbb{Z}[\tfrac{1}{S}]$ on $(\cX,\cM)$.
Here the multiplicity $\mult_p(P)$ is given as  $\mult_p(P)=(v_p(a_1):\dots:v_p(a_n))$, where $P=(a_1:\dots:a_n)$ are Cox coordinates for the point as defined in Section \ref{section: Cox coordinates}.

For a big and nef $\mathbb{Q}$-divisor on $X$, let $$H_L\colon X(\mathbb{Q})\rightarrow \mathbb{R}_{>0}$$ be the corresponding height as defined in Section \ref{section: heights on toric variety}. For an integer $B$, we write $$N_{(X,M),L,S}(B)=\#\{P\in (\cX,\cM)(\mathbb{Z}[\tfrac{1}{S}])\mid H_L(P)\leq B \}.$$
We give a precise asymptotic for this counting function as $B\rightarrow \infty$, establishing new cases of \cite[Conjecture 1.4]{Moe25conjecture}.
\begin{theorem}\label{theorem: asymptotics proper toric}
	Let $(X,M)$ be a smooth proper toric pair over $\mathbb{Q}$ with toric integral model $(\cX,\cM)$ over $\mathbb{Z}$ and let $L\in \Pic(X)$ be a big and nef divisor class. Then there exists $\theta>0$ and a polynomial $Q$ of degree $b(\mathbb{Q},(X,M),L)-1$ such that
	$$N_{(X,M),L,S}(B)=B^{a((X,M),L)}\left(Q(\log B)+O\left(B^{-\theta}\right)\right).$$
	Here $a((X,M),L)$ and $b(\mathbb{Q},(X,M),L)$ are the Fujita invariant and the $b$-invariant of $(X,M)$ with respect to $L$ as defined in \cite{Moe25conjecture}, which we recall in Definition \ref{def: a and b invariant}.
	Furthermore, if $L$ is adjoint rigid with respect to $(X,M)$, then the leading coefficient of $Q$ is the leading constant from \cite[Conjecture 1.4]{Moe25conjecture}, and it is explicitly given in Theorem \ref{theorem: full asymptotics} as well as in Theorem \ref{theorem: geometric interpretation constant}.
\end{theorem}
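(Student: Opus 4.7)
The plan is to reduce the counting problem to a lattice-point problem via Cox coordinates, encode the $\cM$-condition in the local factors of a multi-variable height zeta function, and extract the asymptotic by a Tauberian argument on the associated Dirichlet series.

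\textbf{Step 1 (Cox coordinates and local conditions).} Using the presentation from Section \ref{section: Cox coordinates}, each $\cM$-point $P \in (\cX,\cM)(\mathbb{Z}[\tfrac{1}{S}])$ lifts to a tuple of $S$-integers $(a_1,\dots,a_n)$, unique up to a finite-group action, satisfying a coprimality condition coming from the irrelevant ideal of the Cox ring together with the local condition $(v_p(a_1),\dots,v_p(a_n)) \in \fM$ for every $p \nmid S$. The hypotheses that $\fM$ contains $\mathbf{0}$ and a positive multiple of each $\mathbf{e}_i$ are precisely what is needed for this lifting to make sense and for the integer Cox representatives to sweep out all of $\mathbb{A}^n$ outside the irrelevant locus. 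For a big and nef $L$, the height $H_L$ becomes a piecewise monomial function of $(|a_1|,\dots,|a_n|)$ controlled by the polytope $P_L$ attached to $L$.

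\textbf{Step 2 (Dirichlet series with Euler product).} Next I would introduce the multi-variable height zeta function
$$Z(\bfs) = \sum_{P \in (\cX,\cM)(\mathbb{Z}[1/S])} \prod_{i=1}^n H_{D_i}(P)^{-s_i},$$
so that $N_{(X,M),L,S}(B)$ is recovered from $Z(t\cdot L)$ by a Perron-type contour integral. After M\"obius-inverting the coprimality conditions from the irrelevant ideal, $Z$ factors as $Z(\bfs) = Z_\infty(\bfs)\cdot \prod_{p\nmid S} Z_p(\bfs)$ times a finite correction at $p\mid S$, with local factors of the shape
$$Z_p(\bfs) = \sum_{\bfm \in \fM} p^{-\sum_i s_i m_i} + (\text{coprimality correction}).$$
Since $\fM$ meets every coordinate axis, each $Z_p$ converges in a translate of the effective cone, and the product $\prod_p Z_p$ can be compared to a product of Riemann zeta functions in the variables $s_i$ to obtain analytic continuation past the relevant poles.

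\textbf{Step 3 (Tauberian extraction and leading constant).} Along the ray $t\mapsto tL$, the rightmost pole of $Z$ sits at $t = a((X,M),L)$ with order $b(\mathbb{Q},(X,M),L)$, matching Definition \ref{def: a and b invariant} by a direct computation that pairs the polytope of $L$ against the face structure of $\fM$ (this is where the $a$- and $b$-invariants enter combinatorially). A multi-variable Tauberian theorem in the style of de la Bret\`eche then yields $N_{(X,M),L,S}(B) = B^a(Q(\log B) + O(B^{-\theta}))$ with $\deg Q = b-1$. In the adjoint-rigid case, unwinding the Euler product identifies the leading coefficient with the Peyre-style constant of \cite[Conjecture 1.4]{Moe25conjecture}: each $Z_p$ contributes a local Tamagawa-type density, $Z_\infty$ contributes the archimedean volume, and the Artin-type convergence factors account for the remaining geometric invariants in the expression from Theorem \ref{theorem: full asymptotics}.

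\textbf{Main obstacle.} The principal difficulty is obtaining enough analytic continuation of $Z(\bfs)$, with effective vertical-strip bounds, to produce the power-saving error term $B^{-\theta}$, because $\fM\subset\mathbb{N}^n$ is essentially unconstrained. In the Campana case $\fM$ is a shifted cone and each $Z_p$ is visibly a rational function in the $p^{-s_i}$; for general $\fM$ one must first decompose the set into finitely many pieces on which a similar rationality holds, then reassemble the contributions while controlling the polar divisor and vertical growth uniformly in $p$. Matching the leading constant to the adelic integral of \cite{Moe25conjecture} in the adjoint-rigid case is the secondary delicate point, requiring careful identification of the local densities produced by the Euler product with those arising from the geometric definition.
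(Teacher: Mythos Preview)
Your overall strategy---Cox coordinates, Euler product, comparison with zeta functions, multi-variable Tauberian theorem of de la Bret\`eche---matches the paper's. However, there are two substantive divergences worth noting, one in setup and one in the identification of the main obstacle.

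\textbf{Parameterization of the Dirichlet series.} You index your variables $s_1,\dots,s_n$ by the torus-invariant divisors $D_1,\dots,D_n$, in the Batyrev--Tschinkel style, and then propose to restrict to the ray $t\mapsto tL$. The paper instead uses $k$ variables indexed by the \emph{maximal cones} $\sigma_1,\dots,\sigma_k$ of the fan, following de la Bret\`eche's original treatment of toric varieties. The point is that the height condition $H_L(P)\leq B$ reads $\max_j |\bfx^{L(\sigma_j)}|\leq B$, and with the cone-indexed summation variables $r_j=\bfx^{L(\sigma_j)}$ this becomes exactly the box condition $r_1,\dots,r_k\leq B$ required by the Tauberian theorem (Theorem~\ref{theorem: Breteche 1}). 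With your divisor-indexed variables, $H_L(P)\leq B$ is not a box in the $s_i$, so a direct application of de la Bret\`eche's result is not available; you would either need to pass through a single-variable Perron integral (losing the power-saving error) or rewrite everything in cone-indexed coordinates, which is what the paper does from the outset. The linear forms $l_\bfm(\bfs)=\sum_j l_\bfm(\bfe_j)s_j$ arising from this cone parameterization carry all of the combinatorics: the choice of $\bfalpha$ solving a linear program (Proposition~\ref{prop: optimal choice alpha}), the set $\mathscr{L}=\{l_\bfm\mid \bfm\in\Gamma_{M^\circ}\}$, and the rank computation (Proposition~\ref{prop: rank cokernel}) that identifies $b(\mathbb{Q},(X,M),L)-1$ as the cokernel rank.

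\textbf{The obstacle is not where you put it.} You flag analytic continuation of the Euler product for ``unconstrained'' $\fM$ as the principal difficulty, suggesting one must decompose $\fM$ into rational pieces. In fact the local factors $F_p(\bfs)=\sum_{\bfm\in\fM_{\red}}p^{-l_\bfm(\bfs)}$ need not be rational at all: it suffices that, after dividing by $\prod_{\bfm\in\Gamma_{M^\circ}}\zeta(l_\bfm(\bfs))$, the remaining product is $1+O(p^{-1-\epsilon})$ on the relevant domain, which follows directly from the finiteness of $\Gamma_{M^\circ}$ and the fact that every $\bfm\in\fM_{\red}\setminus\fM^\circ$ has $l_\bfm(\bfalpha)>1$. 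The genuine subtlety you do not mention is proving $\deg Q = b-1$ rather than merely $\deg Q\leq b-1$. The first Tauberian theorem only gives the upper bound; to get equality and the leading coefficient the paper must reduce to the pair $(X,M^\circ)$, where $L$ becomes toric adjoint rigid, verify the extra hypotheses (C1)--(C3) of de~la~Bret\`eche's second theorem (Theorem~\ref{theorem: Breteche 2}), and then carry out a Salberger-style volume computation of the region $D(B)$ via the refined fan $\Sigma_{\overline{M}}$. This is where the quasi-proper formalism and most of the technical work in Section~\ref{section: leading constant} are spent.
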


	Here the condition that $L$ is adjoint rigid means that the class $a((X,M),L)\pr_M^* L+K_{(X,M)}$ is represented by a unique $\mathbb{Q}$-divisor on $(X,M)$, see Definition \ref{def: toric adjoint rigid}.
	This theorem is a special case of Theorem \ref{theorem: full asymptotics}, which only requires the pair to be quasi-proper as in Definition \ref{def: quasi-proper pair}, rather than proper. To prove Theorem \ref{theorem: asymptotics proper toric} for divisors which are not adjoint rigid with respect to $(X,M)$, we need to consider quasi-proper pairs to show that $Q$ has degree equal to, rather than at most, $b(\mathbb{Q},(X,M),L)-1$.
	
	Theorem \ref{theorem: asymptotics proper toric} implies \cite[Conjecture 1.1]{PSTVA21} for split toric varieties over $\mathbb{Q}$, including the conjecture for the leading constant if $L$ is adjoint rigid. This generalizes the results by Pieropan and Schindler \cite[Theorem 1.2]{PiSc23} to heights corresponding to divisors different from the log-anticanonical divisors, and improves on their error term.
	The theorem is proved using the universal torsor method, as developed by Salberger \cite{Sal98}.

The proof proceeds along the lines of de la Bretèche's proof \cite{Bre01Toric} of Manin's conjecture for split toric varieties with the anticanonical height, together with Salberger's computation of the leading constant \cite[Section 11]{Sal98}. Using the universal torsor, we identify the set of $\cM$-points on $(\cX,\cM)$ with a subset of $\mathbb{Z}^n$, and we study the counting function via a related multivariate Dirichlet series. We then derive the asymptotic by applying two Tauberian theorems by de la Bretèche \cite{Bre01Sum} to this series.

Our proof generalizes de la Bretèche's proof \cite{Bre01Toric} of Manin's conjecture for split toric varieties in two ways: it generalizes generalizing his results to $\cM$-points, and it furthermore works with other heights than the anticanonical height.
	
	Theorem \ref{theorem: asymptotics proper toric} is of interest even in the classical setting of rational points, as it improves on the original proof of Manin's conjecture for toric varieties by Batyrev and Tschinkel \cite[Corollary 1.5]{BaTs96} by providing a good control of the error term. 
	In the classical setting of rational points, Theorem \ref{theorem: asymptotics proper toric} and its proof are similar to \cite[Theorem 1]{Ess07}, where heights coming from other metrizations are considered.

	We now give a few examples to illustrate Theorem \ref{theorem: asymptotics proper toric}. In these examples, we will consider weak Campana points.
	\begin{example}
		Theorem \ref{theorem: asymptotics proper toric} implies that
		$$\#\left\{(x:y:z)\in \mathbb{P}^{2}(\mathbb{Q})\,\middle\vert
		\begin{aligned} &\,x,y,z\in \mathbb{Z}\setminus\{0\},\, \gcd(x,y,z)=1, \\
			&\, xyz \text{ is squareful},\, \max(|x|,|y|, |z|)\leq B
		\end{aligned}
		\, \right\}
		= B^{3/2} (Q(\log B)+O(B^{-\theta}))$$
		as $B\rightarrow \infty$, where $\theta>0$ is a constant and $Q$ is a cubic polynomial with leading coefficient
		$$\prod_{p \text{ prime}} \left(1-p^{-1}\right)^6 \left(\frac{1-p^{-3/2}}{\left(1-p^{-1/2}\right)^3} -3p^{-1/2}\right)\approx 0.862.$$
		See Example \ref{example: weak Campana points projective plane} for the derivation of this result.
	\end{example}
	
	\begin{example} \label{example: introduction weak Campana projective space points with equal weights}
		More generally, Theorem \ref{theorem: asymptotics proper toric} implies that for any positive integers $m$ and $n$,
		\begin{multline*}
			\#\left\{(x_1:\dots: x_n)\in \mathbb{P}^{n-1}(\mathbb{Q})\middle\vert
			\begin{aligned} &\,x_1,\dots, x_n\in \mathbb{Z}\setminus \{0\},\, \gcd(x_1,\dots,x_n)=1, \\
				&\, \prod_{i=1}^n x_i \text{ is }m\text{-full},\, \max(|x_1|,\dots, |x_n|)\leq B
			\end{aligned}
			\, \right\}=\\ B^{n/m}(Q(\log B)+O(B^{-\theta}))
		\end{multline*}
		as $B\rightarrow \infty$, where $\theta>0$ is a constant and $Q$ is a polynomial of degree
		$$\binom{m+n-1}{n-1}-\binom{m-1}{n-1}-n.$$
		We will derive this result in Example \ref{example: weak Campana projective space points with equal weights}.
	\end{example}
	\begin{remark}
	This result is related to Streeters result on powerful values of norm forms \cite[Theorem 1.1]{Str21}, as the Campana pair considered in that article can be regarded as a Galois twist of $(\mathbb{P}^n_{\mathbb{Q}}, \tfrac{1}{m} \sum_{i=1}^n D_i)$. We discuss his result in further detail in \cite[\S 9.5]{Moe25conjecture}.	
	\end{remark}

	More generally, Theorem \ref{theorem: asymptotics proper toric} gives an asymptotic for the number of weak Campana points of bounded height on a split toric variety. See \cite[Theorem 1.7]{Moe25conjecture} for a direct description of the Fujita invariant and the $b$-invariant in this setting.

As another consequence of Theorem \ref{theorem: asymptotics proper toric}, we also find an asymptotic formula for the number of rational points of bounded height in the image of a toric rational map, i.e. a rational map of toric varieties restricting to a homomorphism on the dense tori.
\begin{corollary} \label{corollary: image pair}
	Let $f\colon Y\dashrightarrow X$ be a dominant toric rational map of smooth proper split toric varieties, let $(Y,M)$ be a proper toric pair with toric integral model $(\cY,\cM)$, and let $L\in \Pic(X)$ be a big and nef divisor class. Then for all integers $B$, we have that
	$$\#\{P\in f((\cY,\cM)(\mathbb{Z}))\mid H_L(P)\leq B\}=\Delta^{-1} N_{(X,f_*M),L,1}(B).$$
	Here $(X,f_*M)$ is the proper toric pair as in Definition \ref{def: image pair}, and $\Delta$ is the order of $(N/\overline{f}N')\otimes \mathbb{Z}/2\mathbb{Z}$, where $\overline{f}\colon N'\rightarrow N$ is the map of lattices corresponding to $f$.
\end{corollary}
This corollary is a direct consequence of Lemma \ref{lemma: image pair} together with the fact that the height of a rational point $(a_1,\dots:a_n)$ does not depend on the signs of $a_1,\dots,a_n$.
Theorem \ref{theorem: asymptotics proper toric} and Corollary \ref{corollary: image pair} together give an explicit asymptotic for the number of rational points on $X$ of bounded height which lie in the image of an $\cM$-point. In particular, this gives a formula for the number of rational points of bounded height in the image of $f$. This is in the spirit of the Loughran--Smeets conjecture and its extensions, as studied for instance in \cite{LRS24,BLS25}. It does not fall within the framework of the conjecture however, as the generic fiber of a toric morphism is usually not geometrically irreducible.

\begin{example} \label{example: counting images}
	Let $f\colon \mathbb{P}^2\dashrightarrow \mathbb{P}^2$ be the rational map given by $(x:y:z)\mapsto (x^2:y^2:xz)$. Then the number of points in $f(\mathbb{P}^2(\mathbb{Q}))\subset \mathbb{P}^2(\mathbb{Q})$ of naive height at most $B$ tends to $B^2(Q(\log B)+O(B^{-\theta}))$, where $\theta>0$ and $Q$ is a linear polynomial with leading coefficient $$\frac{1}{2}\prod_{p \text{ prime}}(1-2p^{-2}+2p^{-3}-p^{-4})\approx 0.245.$$ We will derive this asymptotic in Example \ref{example: counting images worked out}.
\end{example}

\subsection{Structure of the paper}
In Section \ref{section: geometry toric pairs}, we introduce toric pairs and study their geometry. We will first introduce Cox coordinates on toric varieties, which generalize the homogeneous coordinates on projective space. After that, we compute the Picard group of a toric pair. We finish the section by introducing quasi-proper pairs and the universal torsor of a toric pair.

In Section \ref{section: M points bounded height}, we recall the toric height used in this paper, and use it to state Theorem \ref{theorem: full asymptotics}. This theorem gives an asymptotic formula for the counting function $N_{(X,M),L,S}$ for any quasi-proper toric pair $(X,M)$, and recovers Theorem \ref{theorem: asymptotics proper toric} as a special case. Afterwards, we illustrate the theorem in a few examples and give a geometric interpretation of the leading constant.

Finally, we prove the Theorem \ref{theorem: full asymptotics} in two parts. In the first part we use \cite[Théorème 1]{Bre01Sum} to show the asymptotic $N_{(X,M),L,S}(B)=B^{a((X,M),L)}\left(Q(\log B)+O\left(B^{-\theta}\right)\right)$ for some polynomial $Q$ of degree at most $b(\mathbb{Q},(X,M),L)-1$ for which the   holds. In the second part we use \cite[Théorème 1]{Bre01Sum} to show that $Q$ has degree exactly equal to $b(\mathbb{Q},(X,M),L)-1$, and we compute the leading coefficient under assumption that $L$ is toric adjoint rigid.
	\subsection{Acknowledgments}
	This article is based on my PhD thesis \cite{Thesis}, which I wrote at
	Utrecht University under the supervision of Marta Pieropan. I would like to thank her for her
	unwavering support for my work and her various suggestions. I would also like to thank Alec
	Shute, Sam Streeter, Tim Santens and Damaris Schindler for the useful discussions we had
	during the workshop \textit{Campana points on toric varieties II} in Bristol, as well as on other occasions. I would also like to thank Régis de la Bretèche for his thoughtful response to a question of mine. Finally, I would like to thank Sho Tanimoto and Sam Streeter for their useful comments on an earlier version of the paper.
	
	\subsection{Notation}
	We use the convention that the set of natural numbers $\mathbb{N}$ contains $0$ and we write $\mathbb{N}^*$ for the set of nonzero natural numbers.

	We typically denote vectors using boldface and write their components using a normal face together with an index. For example, we may write $\mathbf{s}=(s_1,\dots,s_n)$ for a vector in $\mathbb{R}^n$. For two vectors $\mathbf{a}, \mathbf{b}\in \mathbb{R}^n$, we write $\mathbf{a}>\mathbf{b}$ if $a_i>b_i$ for all $i=1,\dots,n$. We also denote the $i$-th basis vector of $\mathbb{R}^n$ by $\mathbf{e}_i$.
	
	For an abelian group $G$, we write $G_{\mathbb{Q}}=G\otimes_{\mathbb{Z}} \mathbb{Q}$ and $G_{\mathbb{R}}=G\otimes_{\mathbb{Z}} \mathbb{R}$ for its base change to $\mathbb{Q}$ and $\mathbb{R}$, respectively. We denote its dual by $G^\vee=\Hom(G,\mathbb{Z})$. For a symbol $D$ we write $\mathbb{Z}(D)$ for the group isomorphic to $\mathbb{Z}$ with generator $D$, and similarly we write $\mathbb{Q}(D)\cong \mathbb{Q}$ for the vector space with generator $D$. 
	
	The logarithm $\log$ refers to the natural logarithm.
	For two nonnegative functions $f,g$ the notation $f(x)\ll g(x)$ means that there exists a constant $c$ such that $f(x)\leq cg(x)$ for all $x$ in the common domain of the functions $f$ and $g$.
	

For a field $K$ and a $\mathbb{Z}$-scheme $X$, we denote its base change to $K$ by $X_K=X\times_{\mathbb{Z}}\spec K$.
	\section{Geometry of toric pairs} \label{section: geometry toric pairs}
	In this section, we study the geometry of toric pairs and their Picard groups. We will first recall some basic toric geometry.
	
	\subsection{Cox coordinates on split toric varieties} \label{section: Cox coordinates}
	In this section we introduce smooth proper toric varieties and their Cox coordinates, which are a generalization of the homogeneous coordinates on projective space. We will use these Cox coordinates to describe and study $\cM$-points on toric pairs. Let $N$ be a lattice and let $\Sigma$ be a regular and complete fan in $N$ as in \cite[Definition 8.1.1]{Sal98}. To such a fan, we define the associated toric scheme (over $\mathbb{Z}$) to be $\cX_{\Sigma}$ as in \cite[Remark 8.6]{Sal98} and the associated toric variety over a field $K$ to be $X_{\Sigma}:=\cX_{\Sigma,K}$. By construction, $\cX_{\Sigma}$ is a proper and smooth $\mathbb{Z}$-scheme and $X:=X_{\Sigma}$ is a proper and smooth split toric variety over $K$.
	
	\begin{notation}
		For a fan $\Sigma$, we write $\Sigma(1)$ for the set of rays in $\Sigma$ and $\Sigma_{\max}$ for the set of maximal cones in $\Sigma$. For a torus invariant divisor $D_i\subset X$, we write $\rho_i\subset N_{\mathbb{R}}$ for the corresponding ray and $n_{\rho_i}\in N$ for its ray generator. Finally, we write $\Div_T(X)\cong \mathbb{Z}^n$ for the group of torus invariant divisors on $X$.
	\end{notation}
	
	\begin{definition}
		For a split toric variety $X=X_{\Sigma}$ over $\mathbb{Q}$, we define its \textit{toric integral model} to be $\cX=\cX_{\Sigma}$. We write $U=X\setminus \bigcup_{i=1}^n D_i$ for the dense open torus in $X$.
	\end{definition}
	On a split toric variety, rational points can be described using Cox coordinates, generalizing the homogeneous coordinates on projective space. We recall the construction of Cox coordinates on such varieties and their integral model. For a regular and complete fan $\Sigma$ in $N\cong \mathbb{Z}^d$, \cite[Theorem 4.1.3]{CLS11} gives an exact sequence
	\begin{equation} \label{equation: Picard group toric variety}
		0\rightarrow N^\vee\rightarrow \Div_T(X)\rightarrow \Pic(X)\rightarrow 0,
	\end{equation}
	where the first map is the map sending a character to its corresponding divisor. When combined with \cite[Proposition 4.2.5]{CLS11}, this sequence implies $\Pic(X)\cong \mathbb{Z}^{n-d}$. Furthermore, by this exact sequence, we can view the $\mathbb{Z}$-torus $\cG:=\Hom(\Pic(X), \mathbb{G}_{m,\mathbb{Z}})\cong \mathbb{G}_{m,\mathbb{Z}}^{n-d}$ as the kernel of the homomorphism $$\mathbb{G}_{m,\mathbb{Z}}^n\cong \Hom(\mathbb{Z}^{\Sigma(1)}, \mathbb{G}_{m,\mathbb{Z}}) \rightarrow \mathbb{G}_{m,\mathbb{Z}}^d\cong \cHom(N^\vee, \mathbb{G}_{m,\mathbb{Z}})$$
	induced by $N^\vee\rightarrow \mathbb{Z}^{\Sigma(1)}$.
	Now for each cone $\sigma\in \Sigma$, let
	$$\bfx^{\hat{\sigma}}:=\prod_{\substack{i=1 \\ \rho_i\not\in\sigma}}^n x_i,$$
	and let $$\cZ=\{(x_1,\dots,x_n)\in \mathbb{A}_{\mathbb{Z}}\mid\bfx^{\hat{\sigma}}=0 \text{ for all } \sigma\in \Sigma_{\max}\}.$$
	The torus $\cG$ acts on $\mathbb{A}^n_{\mathbb{Z}}\setminus \cZ$ by coordinate multiplication. The homomorphism $\phi\colon \mathbb{Z}^n\rightarrow N$ given by $\bfe_i\rightarrow n_{\rho_i}$ induces a morphism $$\pi\colon \mathbb{A}^n_{\mathbb{Z}}\setminus \cZ\rightarrow \cX_{\Sigma}$$ of toric varieties. As in \cite[Theorem 5.1.11]{CLS11}, this homomorphism is a $\cG$-torsor, so $\pi$ gives an isomorphism
	$$\cX_{\Sigma}\cong  \left(\mathbb{A}^n_{\mathbb{Z}}\setminus \cZ\right)/\cG.$$
	In fact, $\pi$ is a universal torsor for $\cX_{\Sigma}$.
	We use this isomorphism to define Cox coordinates.
	By \cite[Proposition 2.1]{FrPi13}, any point $P\in \cX_{\Sigma}(\mathbb{Z})$ is the image of a point $\tilde{P}\in \left(\mathbb{A}^n_{\mathbb{Z}}\setminus \cZ\right)(\mathbb{Z}).$
	\begin{definition}
		For a point $\tilde{P}=(a_1,\dots,a_n)\in \left(\mathbb{A}^n_{\mathbb{Z}}\setminus \cZ\right)(\mathbb{Z})$, we say that $(a_1,\dots,a_n)$ are (integral) \textit{Cox coordinates} for the point $P=\pi(\tilde{P})\in \cX_{\Sigma}(\mathbb{Z})=X_{\Sigma}(\mathbb{Q})$. We will write $P=(a_1:\dots:a_n)$, in analogy to the homogeneous coordinates on projective space. For $p$-adic points, we define Cox coordinates analogously.
	\end{definition}
	In particular, a point on a toric variety $P\in X(\mathbb{Q})$ lies on the torus $U(\mathbb{Q})$ exactly if it is described using nonzero Cox coordinates.
	Since $\cG(\mathbb{Z})\cong\{\pm 1\}^{n-d}$, a point on $U(\mathbb{Q})$ has exactly $2^{n-d}$ representatives by (integral) Cox coordinates.
	
	\subsection{Toric pairs and \texorpdfstring{$\cM$}{M}-points}

\begin{definition} \label{def: toric pair}
	A (smooth split) \textit{toric pair} over a field $K$ is a divisorial pair $(X,M)$ as in \cite{Moe25conjecture}, where $X$ is a smooth proper split toric variety over $K$ and $M=((D_1,\dots,D_n), \fM)$, where $D_1,\dots,D_n$ are the torus invariant prime divisors on $X$ and $\fM\subset \mathbb{N}^n$ is a subset containing the origin. The \textit{toric integral model} of a toric pair $(X,M)$ is the the pair $(\cX,\cM)$ over $\mathbb{Z}$, where $\cX$ is the toric integral model of $X$ and $\cM=((\cD_1,\dots,\cD_n), \fM)$, where $\cD_1,\dots,\cD_n$ are the Zariski closures of $D_1,\dots,D_n$ in $\cX$.
\end{definition}
In contrast to \cite{Moe25conjecture}, we restrict $\fM$ to be a subset of $\mathbb{N}^n$ as we will only count rational points contained in the dense torus.
Proper toric pairs are rationally connected by \cite[Proposition 5.3]{Moe25conjecture}, so they satisfy the hypotheses of \cite[Conjecture 1.4]{Moe25conjecture}.
\begin{notation}
	For a toric pair $(X,M)$, we let $\fM_{\red}\subset \fM$ be the collection of all $\bfm$ such that $\bigcap_{\substack{i=1 \\ m_i>0}}D_i\neq \emptyset$. Furthermore, we let $\Gamma_M$ be the minimal set of generators of $\fM_{\red}$. 
\end{notation}
For instance, if $X=\mathbb{P}^{n-1}$ and $\fM=\mathbb{N}^n$, then $\fM_{\red}=\{\bfm\in \mathbb{N}^n\mid \min(\bfm)=0\}$ and $\Gamma_M=\{\mathbf{e}_1,\dots,\mathbf{e}_n\}$.

	Using Cox coordinates, we define the multiplicity of every rational point with respect to a prime.
	\begin{definition}
		Let $X$ be a smooth proper toric variety over $\mathbb{Q}$ and let $p$ be a prime number. For every rational point $Q=(a_1:\dots:a_n)\in U(\mathbb{Q}_p)\subset \cX(\mathbb{Z}_p)$, we let the \textit{multiplicity} of $Q$ at $p$ be $$\mult_p(Q)=(v_p(a_1),\dots,v_p(a_n))\in \mathbb{N}^n.$$
	\end{definition}
	This multiplicity is the same as the multiplicity defined in \cite[Definition 3.8]{Moe25conjecture} determined by the divisors $\cD_1,\dots,\cD_n$.
	\begin{definition}
		Let $(X,M)$ be a toric pair over $\mathbb{Q}$ with toric integral model $(\cX,\cM)$, let $S$ be a positive integer and let $p$ be a prime number. A $p$-adic point $Q\in X(\mathbb{Q}_p)$ is a $p$-adic $\cM$-point if $\mult_p(Q)\in \fM$. A rational point $Q\in X(\mathbb{Q})$ is an $\cM$-point with respect to $\mathbb{Z}[\tfrac{1}{S}]$ if it is a $p$-adic $\cM$-point for all prime numbers $p$ not dividing $S$. We denote the sets of $\cM$-points by $(\cX,\cM)(\mathbb{Z}_p)$ and $(\cX,\cM)(\mathbb{Z}[\tfrac{1}{S}])$, respectively.
	\end{definition}
	\subsection{Images of \texorpdfstring{$\cM$}{M}-points under toric rational maps}
	We now describe how images of rational points under toric rational maps can be viewed as $\cM$-points on a toric pair.
	For this we first recall the homomorphism $\phi$ from \cite[Definition 5.10]{Moe24}.
\begin{definition} \label{def: invariants}
	Let $X$ be a smooth split toric variety with torus invariant prime divisors $D_1,\dots,D_n$ and corresponding ray generators $n_{\rho_1},\dots, n_{\rho_n}$. We define $\phi\colon \mathbb{Z}^n\rightarrow N$ to be the group homomorphism given by $$(m_1,\dots,m_n)\rightarrow \sum_{i=1}^n m_i n_{\rho_i}.$$
	For a toric pair $(X,M)$, we let $N_M^+,N_M$ be the monoid and the lattice generated by the image of $\fM_{\red}$, respectively.
\end{definition}
 Let $f\colon X\dashrightarrow X'$ be a toric rational map of smooth proper split toric varieties over $\mathbb{Q}$, i.e. a homomorphism $U\rightarrow U'$ of the dense tori in $X$ and $X'$, and let $\overline{f}\colon N\rightarrow N'$ be the corresponding map of lattices.
 \begin{definition} \label{def: image pair}
 	For a toric pair $(X,M)$, let $(X',f_*M)$ be the toric pair given by the set of multiplicities
 	$$f_* \fM=(\phi')^{-1}(\overline{f}\circ\phi)(\fM_{\red}),$$
 	where $\phi\colon \mathbb{Z}^n\rightarrow N, \phi'\colon \mathbb{Z}^{n'}\rightarrow N'$
 	are the homomorphisms from Definition \ref{def: invariants} for $X$ and $X'$.
 \end{definition}
The images of $\cM$-points under $f$ are $f_*\cM$-points, and the converse holds up to changing the signs of the coordinates.
\begin{lemma} \label{lemma: image pair}
	Let $f\colon X\dashrightarrow X'$ be a toric rational map of smooth split toric varieties over $\mathbb{Q}$ and let $(X,M)$ be a toric pair. Let $\cU,\cU'$ be the dense tori of $\cX$ and $\cX'$. Then
	$$f((\cX,\cM)(\mathbb{Z}))\subset (\cX',f_*\cM)(\mathbb{Z}).$$
	Furthermore, for every $P\in (\cX',f_*\cM)(\mathbb{Z})$, there exists $u\in \cU'(\mathbb{Z})$ such that the product $uP$ lies in the image, and $u$ is unique modulo $f(\cU(\mathbb{Z}))$.
\end{lemma}
\begin{proof} Let $p$ be a prime number. The composition $\phi\circ\mult_p\colon U(\mathbb{Q}_p)\rightarrow N$ is a surjective group homomorphism with kernel $\cU(\mathbb{Z}_p)$, see e.g. \cite[Proposition 5.12.]{Moe24}, giving a natural isomorphism $N\cong U(\mathbb{Q}_p)/\cU(\mathbb{Z}_p)$.
Therefore, the diagram
$$\begin{tikzcd}
	U(\mathbb{Q}_p) \arrow[r, "\mult_p"] \arrow[d] & \mathbb{N}^n \arrow[r, "\phi"]     & N \arrow[d] \\
	U'(\mathbb{Q}_p) \arrow[r, "\mult_p"]          & \mathbb{N}^{n'} \arrow[r, "\phi'"] & N'         
\end{tikzcd}$$
commutes. Therefore for every $Q\in (\cX,\cM)(\mathbb{Z}_p)$, we have that $\mult_p(f(Q))\in f_*\fM$. Furthermore, the Chinese remainder theorem implies that for every $P\in (\cX',f_*\cM)(\mathbb{Z})$ there is $Q\in (\cX,\cM)(\mathbb{Z})$ such that $f(Q)$ has the same multiplicities as $P$ at all prime numbers. Therefore, they differ by an $\mathbb{Z}$-integral point $u$ on the torus $\cU'$. Finally, if is another integral point $u'$ such that $u'P$ lies in the image, then $u^{-1}u'$ also lies in the image.
\end{proof}
\begin{remark}
Lemma \ref{lemma: image pair} generalizes from $\mathbb{Z}$ to any ring of $\cO_{K,S}$ integers, over a global field $K$, provided $S$ is chosen large enough to ensure that $\cO_{K,S}$ is a principal ideal domain. This is done by replacing the role of the Chinese remainder theorem in the proof with the fact that $\mathbb{A}^1$ satisfies strong approximation away from the infinite places.
\end{remark}
\begin{example}
Let $X$ be a smooth proper split toric variety over $\mathbb{Q}$, let $m$ be a positive integer, and let $f\colon X\rightarrow X$ be the morphism corresponding to the multiplication-by-$m$ map $N\rightarrow N$. Then $f(X(\mathbb{Q}))$ are the set of points in $X(\mathbb{Q})$ of the form $(a_1^{m}:\dots:a_n^m)$, for nonzero rational numbers $a_1,\dots,a_n$. On the other hand, if $(X,M)$ is the pair given by $\fM=\mathbb{N}^n$, then $(\cX,f_*\cM)(\mathbb{Z})$ are the strict Darmon points corresponding to the Campana pair $\left(X,\sum_{i=1}^n \left(1-\tfrac{1}{m}\right)D_i\right)$ (as defined in \cite[Definition 3.16]{Moe24}). These are the points of the form $(\pm a_1^{m}:\dots:\pm a_n^m)$, for nonzero rational numbers $a_1,\dots,a_n$. In particular we see that the set of Darmon points is equal to the image exactly when $m$ is odd.
\end{example}
	\subsection{Divisors on toric pairs}
	In this section, we will study the Picard group of toric pairs. 
	\begin{definition}
		We write $\Div_T(X)$ for the group of torus-invariant divisors on a split toric variety $X$. Analogously, we call an element of $\Div_T(X,M):=\bigoplus_{\bfm\in \Gamma_M} \mathbb{Z}(\tilde{D}_\bfm)$ a \textit{torus-invariant} divisor on $(X,M)$. Similarly, we call an element in $\Div_T(X,M)_{\mathbb{Q}}$ a torus-invariant $\mathbb{Q}$-divisor.
	\end{definition}
	
	There is a natural homomorphism from torus-invariant divisors on $X$ to torus-invariant divisors on $(X,M)$.
	\begin{definition}
		Let $(X,M)$ be a toric pair over a field $K$.
		The \textit{pullback} of torus-invariant divisors on $X$ to torus-invariant divisors on $(X,M)$ is the group homomorphism $$\pr_M^*\colon \Div_T(X)\rightarrow \Div_T(X,M)$$ defined by
		$$D_i\mapsto \sum_{\bfm\in \Gamma_{M}} m_i\tilde{D}_{\bfm},$$
		for $i=1,\dots, n$.
	\end{definition}
	This is the restriction of the pullback homomorphism in \cite[Definition 4.15]{Moe25conjecture} to torus-invariant divisors.
	Using the pullback, we define the Picard group of a toric pair.
	
	\begin{definition}
		Let $(X,M)$ be a toric pair. A torus-invariant divisor on $(X,M)$ is \textit{principal} if it is the pullback of a principal divisor on $X$. We say that two divisors $D,D'$ on $(X,M)$ are \textit{linearly equivalent} if $D-D'$ is principal.
		We define the \textit{Picard group} of $(X,M)$ as
		$$\Pic(X,M)=\Div_T(X,M)/\{\text{principal divisors}\}.$$
	\end{definition}
	By definition, the pullback $\pr^*_M \colon \Div_T(X)\rightarrow \Div_T(X,M)$ induces a homomorphism
	$$\Pic(X)\rightarrow \Pic(X,M).$$
	Finally, we introduce the effective cone.
	\begin{definition}
		A torus-invariant divisor $\sum_{\bfm\in \Gamma_M} a_{\bfm}D_{\bfm}$ on a toric pair $(X,M)$ is \textit{effective} if $a_{\bfm}\geq 0$ for all $\bfm\in \Gamma_M$. We call the cone $\Eff^1(X,M)\subset \Pic(X,M)_{\mathbb{R}}$ generated by torus-invariant divisors the \textit{effective cone}.
	\end{definition}
	
	In \cite{Moe25conjecture}, we defined the Picard group and the effective cone more generally by using the divisor group $$\Div(X,M)=\Div(U)\oplus \Div_T(X,M)$$ of all divisors on $(X,M)$, rather than using only the torus-invariant divisors. However, the next proposition shows that every effective divisor on $(X,M)$ is linearly equivalent to an effective torus-invariant divisor, so the definitions agree.
	\begin{proposition} \label{prop: effective cone generated by torus-invariant}
		Let $(X,M)$ be a smooth toric pair.
		Every divisor $D$ on $(X,M)$ is linearly equivalent to a torus-invariant divisor on $(X,M)$. Furthermore, if $D$ is effective, it is linearly equivalent to an effective torus-invariant divisor. Hence the effective cone $\Eff^1(X,M)$ is a rational polyhedral cone generated by the torus-invariant prime divisors on $(X,M)$.
	\end{proposition}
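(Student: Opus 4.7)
The plan is to split any divisor $D \in \Div(X,M) = \Div(U) \oplus \Div_T(X,M)$ as a pair $(E,F)$ and to kill the horizontal component $E$ by subtracting a principal divisor $\pr_M^*(\div_X(f))$ for a suitable $f \in K(X)^*$. Since $U \cong \mathbb{G}_m^d$ has trivial Picard group, every divisor on $U$ is principal, which makes the cancellation possible. For the effectivity claim I will additionally normalise $f$ by a character $\chi^{v_0}$ to guarantee that the leftover torus-invariant divisor is still effective.

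For the first claim I would pick any $f \in K(X)^*$ with $\div_U(f) = E$, using $\Pic(U)=0$. Writing out $\pr_M^*(\div_X(f))$ in $\Div(U)\oplus\Div_T(X,M)$ (see \cite[Definition 4.15]{Moe25conjecture}) gives horizontal part $\div_U(f) = E$ and torus-invariant part $\sum_{\bfm \in \Gamma_M}\bigl(\sum_{i=1}^n m_i \operatorname{ord}_{D_i}(f)\bigr)\tilde{D}_{\bfm}$, so $D - \pr_M^*(\div_X(f))$ has zero horizontal part and is therefore torus-invariant.

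For the effectivity claim, assume $D=(E,F)$ is effective, so $E$ is effective on $U$ and $F=\sum_{\bfm} c_{\bfm}\tilde{D}_{\bfm}$ with $c_{\bfm}\geq 0$. Then the $f$ above may be taken regular on $U$, i.e.\ a Laurent polynomial $f=\sum_{v\in N^\vee}\lambda_v\chi^v$. The crucial step is to fix some $v_0$ with $\lambda_{v_0}\neq 0$ and to replace $f$ by $f':=f/\chi^{v_0}$. Since $\chi^{v_0}$ is a unit on $U$ we still have $\div_U(f')=E$, and since $f'$ has a nonzero constant term $\lambda_{v_0}\chi^0$, the standard identity $\operatorname{ord}_{D_i}(\sum_v\mu_v\chi^v) = \min_{v:\mu_v\neq 0}\langle v, n_{\rho_i}\rangle$ for smooth toric varieties forces $\operatorname{ord}_{D_i}(f')\leq 0$ for every boundary divisor $D_i$. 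Because the entries $m_i$ are nonnegative, this gives $\sum_i m_i\operatorname{ord}_{D_i}(f')\leq 0$, and hence the torus-invariant divisor $D - \pr_M^*(\div_X(f'))$ has coefficients $c_{\bfm} - \sum_i m_i\operatorname{ord}_{D_i}(f')\geq c_{\bfm}\geq 0$ and is effective.

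The assertion on $\Eff^1(X,M)$ then follows formally: every torus-invariant effective divisor is by definition a nonnegative combination of the $\tilde{D}_{\bfm}$, so its class lies in the cone spanned by the $[\tilde{D}_{\bfm}]$ in $\Pic(X,M)_{\mathbb{R}}$, whence $\Eff^1(X,M) = \sum_{\bfm\in\Gamma_M}\mathbb{R}_{\geq 0}[\tilde{D}_{\bfm}]$ is a rational polyhedral cone. The only real subtlety is the effectivity step: without the character-normalisation trick one is stuck with a function $f$ whose boundary orders $\operatorname{ord}_{D_i}(f)$ may be arbitrary, and the uniform bound $\operatorname{ord}_{D_i}(f')\leq 0$ obtained by dividing through by any monomial appearing in $f$ is exactly what is needed to close the argument.
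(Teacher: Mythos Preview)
Your proof is correct and takes a somewhat different route from the paper's. The paper first replaces $D$ by a torus-invariant $D'$ (using \cite[Theorem 4.1.3]{CLS11}), then observes that the space $H^0(X,D')=\{f\in K(X)^\times \mid D'+\pr_M^*\div f\geq 0\}\cup\{0\}$ is torus-stable and invokes the character-eigenspace decomposition \cite[Lemma 1.1.16]{CLS11} to extract a monomial $\chi^\mu$ with $D'+\pr_M^*\div\chi^\mu\geq 0$. You instead work directly with the splitting $\Div(X,M)=\Div(U)\oplus\Div_T(X,M)$: you kill the $\Div(U)$-part with a regular function $f$ on $U$, and then use the explicit valuation formula $\operatorname{ord}_{D_i}\bigl(\sum_v\lambda_v\chi^v\bigr)=\min_{\lambda_v\neq 0}\langle v,n_{\rho_i}\rangle$ to see that dividing by any monomial occurring in $f$ forces all boundary orders to be $\leq 0$. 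Your argument is more constructive and avoids the abstract eigenspace lemma, at the cost of needing the exact valuation identity (not just the ultrametric inequality $\geq\min$); that identity is indeed standard, since distinct characters remain linearly independent in the residue field of $\operatorname{ord}_{D_i}$. Both approaches ultimately locate the same correcting character, but yours finds it by hand while the paper's finds it by representation theory.
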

	\begin{proof}
		By \cite[Theorem 4.1.3]{CLS11} every divisor on $X$ is linearly equivalent to a torus-invariant divisor. This directly implies that every divisor on $(X,M)$ is linearly equivalent to a torus-invariant divisor on $(X,M)$.
		
		The proof of the statement for effective divisors is based on the proof of the analogous statements for divisors on $X$ given in \cite[Proposition 4.3.2, Lemma 15.1.8]{CLS11}.
		For an effective divisor $D$ on $(X,M)$, let $D'$ be a torus-invariant divisor linearly equivalent to it. Let
		$$H^0(X,D')=\{f\in K(X)^\times \mid D'+\pr^*_M\div f\geq 0\}\cup\{0\},$$
		where we write $E\geq 0$ for a divisor $E$ on $(X,M)$ to indicate that the divisor is effective.
		Then $H^0(X,D')$ is a vector space invariant under the natural torus action on $K(X)^\times$. Hence by \cite[Lemma 1.1.16]{CLS11},
		$$H^0(X,D')=\bigoplus_{\substack{\mu \in N^\vee\\ D'+\pr^*_M\div \chi^\mu\geq 0 }} K\cdot \chi^\mu,$$
		where $\chi^{\mu}\in \cO(U)^\times$ is the character determined by $\mu \colon N\rightarrow \mathbb{Z}$.
		Since $D$ is effective, $H^0(X,D')$ has to be nontrivial, and thus there exists $\mu\in N^\vee$ such that $D'+\pr^*_M \div(\chi^\mu)$ is an effective torus-invariant divisor linearly equivalent to $D$.
	\end{proof}
	Using the map $\phi\colon \mathbb{Z}^n\rightarrow N$ introduced in Definition \ref{def: invariants} and an analogue of the exact sequence \eqref{equation: Picard group toric variety}, we will give a simple presentation for the Picard group of a toric pair.
	\begin{proposition} \label{prop: Picard group toric pair}
		Let $(X,M)$ be a smooth toric pair over a field $K$. There is an exact sequence
		$$N^\vee\rightarrow \Div_T(X,M)\rightarrow \Pic(X,M)\rightarrow 0,$$
		where $N^\vee\rightarrow \Div_T(X,M)$ is the composition of the map $N^\vee\rightarrow \Div_T(X)$ with the pullback map $\Div_T(X)\rightarrow \Div_T(X,M)$, where the first map is given by sending a character to its corresponding divisor.
		Furthermore, we have an exact sequence
		$$0\rightarrow N^\vee\rightarrow \Div_T(X,M)\rightarrow \Pic(X,M)\rightarrow 0$$
		if and only if the lattice $N_M$ from Definition \ref{def: invariants} has finite index in $N$. 
	\end{proposition}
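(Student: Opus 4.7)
The plan is to derive both assertions directly from the classical toric exact sequence (\ref{equation: Picard group toric variety}) for $X$, combined with the definitions of $\Pic(X,M)$ and $\pr_M^*$.

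For the right exact sequence, surjectivity of $\Div_T(X,M) \to \Pic(X,M)$ is immediate from the definition of $\Pic(X,M)$ as a quotient of $\Div_T(X,M)$. The kernel consists, by definition, of torus-invariant divisors on $(X,M)$ that arise as pullbacks of principal divisors on $X$ under $\pr_M^*$. Since $\pr_M^*$ is defined only on $\Div_T(X)$, the only principal divisors on $X$ that can be pulled back in this sense are the torus-invariant ones, and by the classical toric exact sequence (\ref{equation: Picard group toric variety}) these are exactly the character divisors $\div(\chi^\mu)$ for $\mu \in N^\vee$. Hence the kernel of $\Div_T(X,M) \to \Pic(X,M)$ is precisely the image of the composition $N^\vee \to \Div_T(X) \xrightarrow{\pr_M^*} \Div_T(X,M)$, giving the first exact sequence.

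For the characterization of when this sequence is also left exact, I would compute the composed map $N^\vee \to \Div_T(X,M)$ explicitly. Using $\div(\chi^\mu) = \sum_{i=1}^n \langle \mu, n_{\rho_i}\rangle D_i$ together with the formula $\pr_M^* D_i = \sum_{\bfm \in \Gamma_M} m_i \tilde{D}_\bfm$, the image of $\mu \in N^\vee$ equals $\sum_{\bfm \in \Gamma_M} \langle \mu, \phi(\bfm)\rangle \tilde{D}_\bfm$. Thus $\mu$ lies in the kernel iff $\mu$ annihilates $\phi(\Gamma_M) \subset N$. Since $\Gamma_M$ generates $\fM_{\red}$ as a monoid, $\phi(\Gamma_M)$ generates $N_M$ as a lattice by Definition \ref{def: invariants}, so the kernel equals $\Hom(N/N_M, \mathbb{Z})$. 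This group vanishes iff $N/N_M$ is torsion, equivalently iff $N_M$ has finite index in $N$.

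The main conceptual step is the identification of principal torus-invariant divisors on $(X,M)$ with pullbacks of character divisors, which lets the classical sequence (\ref{equation: Picard group toric variety}) do the real work; the rest is a short computation over $N^\vee$. No serious obstacle is anticipated.
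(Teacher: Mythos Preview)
Your proof is correct and follows essentially the same approach as the paper: identify the principal torus-invariant divisors on $(X,M)$ with pullbacks of character divisors via the classical sequence (\ref{equation: Picard group toric variety}), then compute $\mu \mapsto \sum_{\bfm\in\Gamma_M}\langle\mu,\phi(\bfm)\rangle\tilde{D}_\bfm$ explicitly and read off that injectivity is equivalent to $\phi(\Gamma_M)$ spanning $N_{\mathbb{Q}}$. Your phrasing of the injectivity criterion as $\ker = \Hom(N/N_M,\mathbb{Z})$ is slightly more explicit than the paper's, but the content is the same.
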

	\begin{proof}
		
		The torus-invariant principal divisors on $(X,M)$ are exactly the pullbacks of torus-invariant principal divisors on $X$. Thus, the kernel of this homomorphism is the image of $N^\vee$ in $\Div_T(X,M)$ by \cite[Theorem 4.1.3]{CLS11}.
		
		The map $N^\vee\rightarrow \Div_T(X,M)$ is given by
		$$\mu\mapsto \pr^*_M\div(\chi^\mu)= \sum_{\bfm\in \Gamma_M} \langle \mu, \phi(\bfm) \rangle\tilde{D}_{\bfm},$$
		where $\phi$ is the homomorphism defined in Definition \ref{def: invariants}.
		This directly implies that $N^\vee\rightarrow \Div_T(X,M)$ is injective if and only if $\{\phi(\bfm)\mid \bfm\in \Gamma_M\}$ spans $N_\mathbb{Q}$ as a vector space, which is equivalent to $N_M$ having finite index in $N$.
	\end{proof}
	In \cite[Definition 4.33]{Moe25conjecture}, we introduced the \textit{canonical divisor class} $K_{(X,M)}\in \Pic(X,M)$ of a pair. On toric pairs, this class has a particularly nice representative.
	\begin{proposition} \label{prop: canonical divisor toric pair}
		Let $(X,M)$ be a smooth toric pair over a field of characteristic $0$. The canonical class of $(X,M)$ as is given as
		$$K_{(X,M)}=-\sum_{\bfm\in \Gamma_M}[\tilde{D}_\bfm].$$
	\end{proposition}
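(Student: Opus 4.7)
The plan is to compute $K_{(X,M)}$ directly from its definition in \cite[Definition 4.33]{Moe25conjecture} by combining the classical formula for the canonical class of a smooth toric variety with the pullback map $\pr_M^*$ introduced earlier. The definition expresses $K_{(X,M)}$ as a sum of $\pr_M^* K_X$ and a log-type correction depending on the combinatorial structure of the pair, and I would show that these two pieces combine to produce the stated formula in the toric setting.

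First, I would invoke the classical formula $K_X = -\sum_{i=1}^n [D_i] \in \Pic(X)$, valid for any smooth toric variety in characteristic zero; see, e.g., \cite[Theorem 8.2.3]{CLS11}. Then, applying the pullback $\pr_M^*\colon \Div_T(X) \to \Div_T(X,M)$ from Definition~4.15 and interchanging the order of summation gives
$$\pr_M^* K_X = -\sum_{i=1}^n \sum_{\bfm \in \Gamma_M} m_i \tilde{D}_\bfm = -\sum_{\bfm \in \Gamma_M}\Big(\sum_{i=1}^n m_i\Big) \tilde{D}_\bfm.$$
Next, I would add the log-type correction from the definition of $K_{(X,M)}$, which in the toric setup I expect to contribute precisely $\sum_{\bfm \in \Gamma_M}\big(\sum_{i=1}^n m_i - 1\big)\tilde{D}_\bfm$. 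Summing with the display above produces the claimed identity $K_{(X,M)} = -\sum_{\bfm \in \Gamma_M}\tilde{D}_\bfm$ in $\Pic(X,M)$, and this is well-defined as a class by Proposition~\ref{prop: Picard group toric pair}.

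The main obstacle is not the computation itself but rather unwinding the definition of $K_{(X,M)}$ from \cite{Moe25conjecture} and verifying that the correction term specializes in the toric setting to the form stated above. Once that is confirmed, the argument reduces to the rearrangement of sums displayed above, which follows immediately from the pullback formula $\pr_M^* D_i = \sum_{\bfm \in \Gamma_M} m_i \tilde{D}_\bfm$ and does not use any toric-specific input beyond the fact that $K_X$ is represented by $-\sum_i D_i$.
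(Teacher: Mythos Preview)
Your approach is essentially the same as the paper's, which simply cites the classical formula $K_X=-\sum_{i=1}^n[D_i]$ from \cite[Theorem 8.2.3]{CLS11} and says it ``directly implies the desired identity.'' You have just unpacked what ``directly'' means: compute $\pr_M^*K_X$ via the pullback formula and combine it with the correction term built into \cite[Definition~4.33]{Moe25conjecture}. The only soft spot is your phrase ``I expect'' for the shape of the correction term; once you open \cite[Definition~4.33]{Moe25conjecture} and confirm it specializes to $\sum_{\bfm\in\Gamma_M}\bigl(\sum_i m_i-1\bigr)\tilde{D}_\bfm$ in the toric case, the argument is complete and matches the paper's.
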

	\begin{proof}
		By \cite[Theorem 8.2.3]{CLS11}, the canonical divisor class of a toric variety is
		$$K_X=-\sum_{i=1}^n [D_i],$$
		which directly implies the desired identity.
	\end{proof}
	\begin{notation} \label{notation: toric canonical divisor}
		We will denote the representative of $K_{(X,M)}$ introduced in Proposition \ref{prop: canonical divisor toric pair} by $$D_{(X,M)}=-\sum_{\bfm\in \Gamma_M}\tilde{D}_\bfm\in \Div_T(X,M).$$
	\end{notation}
	We recall the Fujita invariant and the $b$-invariant from \cite{Moe25conjecture}.
	\begin{definition} \label{def: a and b invariant}
		Let $(X,M)$ be a toric pair and let $L$ be a big and nef $\mathbb{Q}$-divisor class on $X$. We define the \textit{Fujita invariant} of $(X,M)$ with respect to $L$ to be
		$$a((X,M),L)=\inf\{t\in \mathbb{R}\mid t\pr^*_M L+K_{(X,M)}\in \Eff^1(X,M)\}.$$
		We call $a((X,M),L)\pr^*_M L+K_{(X,M)}$ the \textit{adjoint divisor class of $L$ with respect to $(X,M)$}. We define the $b$-invariant $b(\mathbb{Q},(X,M),L)$ to be the codimension of the minimal supported face of $\Eff^1(X,M)$ containing the adjoint divisor class of $L$ with respect to $(X,M)$.
	\end{definition}
	
	Note that there need not exist a $\mathbb{Q}$-divisor class $L$ such that $\pr^*L=-K_{(X,M)}$. In particular, the $b$-invariant can be strictly smaller than the Picard rank of $(X,M)$ for all choices of $L$. For instance this happens for pairs corresponding to Campana points, see \cite[Example 4.38]{Moe25conjecture}
	
	In \cite{Moe25conjecture}, rigid divisors on pairs were defined.
	An effective divisor on a pair $(X,M)$ is \textit{rigid} if it is the only effective divisor in its linear equivalence class \cite[Definition 4.44]{Moe25conjecture}. In the proof of Theorem \ref{theorem: full asymptotics}, we will consider rigid divisors as well as divisors which are rigid with respect to torus-invariant divisors.
	\begin{definition} \label{def: toric adjoint rigid}
		Let $D\in \Div_T(X,M)_\mathbb{Q}$ be an effective $\mathbb{Q}$-divisor on a smooth toric pair $(X,M)$. We say that $D$ is \textit{(toric) rigid} if $D$ is the only effective (torus-invariant) $\mathbb{Q}$-divisor in its $\mathbb{Q}$-linear equivalence class. For a big and nef $\mathbb{Q}$-divisor $L$ on $X$, we say that $L$ is \textit{(toric) adjoint rigid} with respect to $(X,M)$ if the adjoint divisor class $a((X,M),L)\pr_M^* L+K_{(X,M)}$ is represented by a (toric) rigid effective $\mathbb{Q}$-divisor.
	\end{definition}
	For many toric pairs, toric rigid divisors are just the same as rigid divisors.
	\begin{proposition}
		Let $(X,M)$ be a smooth toric pair. Then every rigid $\mathbb{Q}$-divisor is toric rigid.
		If the pullback map $\Div_T(X)\rightarrow \Div_T(X,M)$ is injective, then the converse also holds.
	\end{proposition}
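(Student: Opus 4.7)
The plan is to deduce the equivalence by analyzing the representation of the dense torus $T\subset X$ on the vector space of rational functions $f$ satisfying $D+\pr_M^*\div f\geq 0$, mimicking the character decomposition used in the proof of Proposition \ref{prop: effective cone generated by torus-invariant}.

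The forward implication is immediate from the definitions: every effective torus-invariant $\mathbb{Q}$-divisor is in particular an effective $\mathbb{Q}$-divisor, so if $D$ is the unique effective $\mathbb{Q}$-divisor in its $\mathbb{Q}$-linear equivalence class, then it is in particular the unique effective torus-invariant one.

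For the converse, assume that the pullback $\pr_M^*\colon \Div_T(X)\to\Div_T(X,M)$ is injective and that $D$ is toric rigid. Let $D'$ be any effective $\mathbb{Q}$-divisor with $D'\sim_\mathbb{Q} D$; after replacing $D$ and $D'$ by a common positive integer multiple (which preserves toric rigidity of $D$), we may assume both divisors are integral and $D'=D+\pr_M^*\div f$ for some $f\in K(X)^\times$. It suffices to show that $f$ is a constant, for then $\pr_M^*\div f=0$ and $D'=D$. Consider the $K$-vector space
\[V=\{g\in K(X)^\times\mid D+\pr_M^*\div g\geq 0\}\cup\{0\},\]
which contains $f$. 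Since $D$ is torus-invariant, $V$ is stable under the natural action of $T$ on $K(X)^\times$ and hence is a $T$-subrepresentation. By semisimplicity of algebraic torus representations, $V$ decomposes as a direct sum of character eigenspaces, $V=\bigoplus_\mu V_\mu$, and each $V_\mu$ is either zero or equal to $K\cdot\chi^\mu$. The character $\chi^\mu$ belongs to $V$ precisely when $D+\pr_M^*\div_X\chi^\mu\geq 0$; in that case the divisor $D+\pr_M^*\div_X\chi^\mu$ is an effective torus-invariant $\mathbb{Q}$-divisor linearly equivalent to $D$, so toric rigidity forces $\pr_M^*\div_X\chi^\mu=0$. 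Combining the injectivity of $\pr_M^*$ with the injectivity of $N^\vee\to\Div_T(X)$ furnished by the exact sequence \eqref{equation: Picard group toric variety}, we conclude that $\mu=0$. Hence $V=K$ consists of constants, so $f\in K^\times$ and $D'=D$.

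The main step, as in the proof of Proposition \ref{prop: effective cone generated by torus-invariant}, is the character decomposition of $V$ under the torus action; the injectivity hypothesis enters precisely to ensure that distinct characters produce distinct torus-invariant divisors on $(X,M)$, so that toric rigidity can be leveraged to pin down the trivial character.
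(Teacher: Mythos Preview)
Your proof is correct and uses essentially the same idea as the paper: both arguments rest on the character decomposition of the space $V=H^0(X,D)=\{g\in K(X)^\times\mid D+\pr_M^*\div g\geq 0\}\cup\{0\}$ under the torus action, together with the injectivity of $N^\vee\to\Div_T(X)\to\Div_T(X,M)$. The paper phrases the converse by contrapositive (if $D$ is not rigid then $\dim V\geq 2$, hence a nonzero character appears and produces a second effective torus-invariant representative), whereas you argue directly that toric rigidity forces $V=K$; these are the same argument.
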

	The pullback map is injective on a proper toric pair by \cite[Proposition 4.30.]{Moe25conjecture}. Thus, on such a pair, toric rigid divisors are the same as rigid divisors.
	\begin{proof}
		If an effective $\mathbb{Q}$-divisor $D\in \Div(X,M)_{\mathbb{Q}}$ is rigid, then Proposition \ref{prop: effective cone generated by torus-invariant} implies that it has to be torus-invariant, and thus toric rigid.
		
		For the converse, we argue by proof by contrapositive. Let $D\in \Div_T(X,M)$ be an effective divisor which is not rigid. Then the vector space
		$$H^0(X,kD)=\{f\in K(X)^\times \mid D+\pr^*_M\div f\geq 0\}\cup\{0\}$$
		considered in Proposition \ref{prop: effective cone generated by torus-invariant}
		is at least two dimensional for some positive integer $k$.
		As
		$$H^0(X,kD)=\bigoplus_{\substack{\mu \in N^\vee\\ kD+\pr^*_M\div \chi^\mu\geq 0 }} K\cdot \chi^\mu,$$
		this implies that there exist at least one nonzero $\mu\in N^\vee$ such that $kD+\pr^*_M\div \chi^\mu\geq 0$. If $\Div_T(X)\rightarrow \Div_T(X,M)$ is injective, then $\div \chi^\mu\neq 0$ so $D+\div \chi^\mu$ is an effective torus-invariant divisor linearly equivalent to $D$ and thus $D$ is not toric rigid.
	\end{proof}
	In general there may be more toric rigid divisors than rigid divisors, as the next examples show.
	\begin{example}
		Let $(X,M)$ be the toric pair given by $X=\mathbb{P}^1$ and $\fM=\{(0,0)\}$, i.e., the pair corresponding to integral points on the torus $\mathbb{G}_m\subset \mathbb{P}^1$. Then $\Div_T(X,M)=0$ so the trivial divisor on $(X,M)$ is toric rigid. On the other hand, $\Pic(\mathbb{G}_m)=0$ so any effective divisor on $(X,M)$ is linearly equivalent to the trivial divisor, and thus the trivial divisor is not rigid.
	\end{example}
	\begin{example}
		Let $X=\mathbb{P}^1\times \mathbb{P}^1$ and let $(X,M)$ be the toric pair corresponding to the integral points for the open $\mathbb{G}_m\times \mathbb{P}^1\subset X$. Then the previous example implies that any big and nef $\mathbb{Q}$-divisor on $X$ is toric adjoint rigid with respect to $(X,M)$.
	\end{example}
	An analogous statement is true for Hirzebruch surfaces of higher degree.
	\begin{example}
		Let $X=\Proj_{\mathbb{P}^1} (\cO_{\mathbb{P}^1}\oplus \cO_{\mathbb{P}^1}(d))$ be the Hirzebruch surface of degree $d>0$. Let $D_2$ be the unique prime divisor with self-intersection $D_2^2=-d$, and let $D_1,D_3$ be the torus-invariant prime divisors intersecting the prime divisor $D_2$. Let $(X,M)$ be the toric pair corresponding to the open $U'=X\setminus (D_1\cup D_3)$. We identify $\Pic(X,M)$ with $\Pic(U')$ using the natural isomorphism as in \cite[Example 4.26]{Moe25conjecture}. Under this identification, the anticanonical divisor on $(X,M)$ is $-K_{(X,M)}=[D_2]+[D_4]\in \Pic(U')$, which is a nonzero effective divisor class. Since $\Pic(U')=\mathbb{Z}$, every divisor on $U'$ is linearly equivalent to a rational multiple of $-K_{(X,M)}$. Consequently, any big and nef $\mathbb{Q}$-divisor on $X$ is toric adjoint rigid with respect to $(X,M)$.
	\end{example}
	\subsection{Quasi-proper pairs}
	In order to prove Theorem \ref{theorem: asymptotics proper toric} for a divisor class $L$ which is not adjoint rigid, we need to count the points on a smaller \textit{quasi-proper} pair.
\begin{definition} \label{def: quasi-proper pair}
	Let $X$ be a smooth proper variety and let $L\in \Pic(X)_{\mathbb{Q}}$ be a big and nef $\mathbb{Q}$-divisor class. A smooth pair $(X,M)$ is \textit{quasi-proper with respect to $L$} if there exists a smooth proper pair $(X,\tilde{M})\supset (X,M)$ such that $a((X,M),L)=a((X,\tilde{M}),L)$. Here the containment is as in \cite[Definition 3.12]{Moe25conjecture} (which for toric pairs boils down to $\fM\subset \tilde{\fM}$).
\end{definition}
Such a pair $(X,\tilde{M})$ can be found by adjoining sufficiently large multiples of the basis vectors to $\fM$.
\begin{proposition} \label{prop: choice quasi-proper pair}
	Let $(X,M)$ be a smooth divisorial pair which is quasi-proper with respect to some big and nef $\mathbb{Q}$-divisor class. Then a proper pair $(X,\tilde{M})$ as in Definition \ref{def: quasi-proper pair} can be found by taking $\tilde{M}=((D_1,\dots,D_n), \tilde{\fM})$, where $$\tilde{\fM}=\fM\cup \{m\bfe_1,\dots,m\bfe_n\}$$ for any sufficiently large integer $m$.
\end{proposition}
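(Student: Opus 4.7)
The strategy is to exhibit $(X,\tilde M)$ explicitly and sandwich its Fujita invariant between $a_0:=a((X,M),L)$ and itself. By hypothesis, fix a proper pair $(X,M_0)$ with $\fM\subseteq\fM_0$ and $a((X,M_0),L)=a_0$. For an integer $m\ge 1$, set $\tilde\fM:=\fM\cup\{m\bfe_1,\ldots,m\bfe_n\}$ and $\fM'':=\fM_0\cup\tilde\fM$; both $(X,\tilde M)$ and $(X,M'')$ are proper pairs by Definition \ref{def: toric pair}, and the goal is to show $a((X,\tilde M),L)=a_0$ for $m$ sufficiently large.

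The first key step is to prove monotonicity of the Fujita invariant in $\fM$: whenever $\fM^a\subseteq\fM^b$ both contain the origin, $a((X,M^a),L)\le a((X,M^b),L)$. Fixing a torus-invariant representative $L=\sum_{i=1}^n c_i D_i$ of the class, and setting $f_t(\bfm):=t\sum_i c_i m_i-1$, Propositions \ref{prop: effective cone generated by torus-invariant} and \ref{prop: Picard group toric pair} combined with the formula for $K_{(X,M)}$ in Proposition \ref{prop: canonical divisor toric pair} translate the effectivity of the adjoint class $t\pr_M^*L+K_{(X,M)}$ into the existence of a character $\mu\in N^\vee_{\mathbb{Q}}$ with $\langle\mu,\phi(\bfm)\rangle\le f_t(\bfm)$ for every $\bfm\in\Gamma_M$. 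Any $\bfm\in\Gamma_{M^a}$ lies in $\fM^b_{\red}$ and therefore admits a decomposition $\bfm=\sum_{j=1}^k\bfm_j$ with $\bfm_j\in\Gamma_{M^b}$; summing the inequalities valid on $\Gamma_{M^b}$ and using the identity $\sum_j f_t(\bfm_j)=f_t(\bfm)-(k-1)\le f_t(\bfm)$ then yields the corresponding inequality on $\Gamma_{M^a}$. Applied to $\fM\subseteq\tilde\fM\subseteq\fM''$ and $\fM_0\subseteq\fM''$, this gives $a_0\le a((X,\tilde M),L)\le a((X,M''),L)$ and $a_0\le a((X,M''),L)$.

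The remaining task is the matching upper bound $a((X,M''),L)\le a_0$ for large $m$, which forces all inequalities above to become equalities. Using that $L$ is big and nef, I would choose a torus-invariant $\mathbb{Q}$-divisor representative $L=\sum_i c_i D_i$ with all $c_i>0$, obtained from an interior lattice point of the polytope of a sufficiently divisible multiple of $L$. Take $m$ exceeding both $\max_i(a_0 c_i)^{-1}$ and every coordinate of every generator of $\Gamma_{M_0}$, so that each $m\bfe_i$ is a genuinely new generator of $\Gamma_{M''}$. Starting from an effective torus-invariant representative of $a_0\pr_{M_0}^*L+K_{(X,M_0)}$ on $(X,M_0)$, I would augment it by the contributions $(a_0 mc_i-1)\tilde D_{m\bfe_i}$ along the new rays; these coefficients are nonnegative by the choice of $m$, producing an effective representative of $a_0\pr_{M''}^*L+K_{(X,M'')}$ and completing the bound. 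The principal obstacle is securing a representative of $L$ with all $c_i>0$, which relies crucially on bigness of $L$ to guarantee a lattice point in the relative interior of the polytope; a secondary technical point is ensuring the minimality of $\Gamma_{M''}$ captures each $m\bfe_i$, which is handled by the lower bound on $m$.
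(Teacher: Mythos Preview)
Your overall strategy is the same as the paper's: sandwich $a((X,\tilde M),L)$ between $a_0$ and $a((X,M''),L)$ using monotonicity, then force $a((X,M''),L)\le a_0$. Your monotonicity argument is correct and essentially reproves \cite[Proposition~4.41]{Moe25conjecture}, which the paper simply cites. The paper also cites \cite[Lemma~4.39]{Moe25conjecture} for the upper bound $a((X,M''),L)=a((X,M'),L)$, whereas you construct an effective representative by hand.

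There is, however, a gap in your construction. You fix a representative $D=\sum_i c_iD_i$ with $c_i>0$ and then separately take ``an effective torus-invariant representative'' $E$ of $a_0\pr_{M_0}^*L+K_{(X,M_0)}$, augmenting it by $(a_0mc_i-1)\tilde D_{m\bfe_i}$. But $E$ differs from $a_0\pr_{M_0}^*D+D_{(X,M_0)}$ by a principal divisor $\pr_{M_0}^*\div\chi^\mu$ for some $\mu\in N^\vee_{\mathbb Q}$, and the correct coefficient along $\tilde D_{m\bfe_i}$ in a representative of $a_0\pr_{M''}^*L+K_{(X,M'')}$ is $a_0mc_i-1-m\langle\mu,n_{\rho_i}\rangle$, not $a_0mc_i-1$. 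With your two choices made independently, this need not be nonnegative for large $m$: positivity requires $a_0c_i>\langle\mu,n_{\rho_i}\rangle$, which is not guaranteed.

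The fix is to reverse the order of choices: start from \emph{any} effective torus-invariant representative of $a_0\pr_{M_0}^*L+K_{(X,M_0)}$ and let $D=\sum_i c_iD_i$ be the representative of $L$ it determines (i.e.\ absorb $\mu$ into $D$). Then $a_0\pr_{M_0}^*D+D_{(X,M_0)}$ is itself effective, and since $(X,M_0)$ is proper there exist $m_i\bfe_i\in\Gamma_{M_0}$, forcing $a_0m_ic_i-1\ge 0$ and hence $c_i>0$ automatically. Now the augmentation by $(a_0mc_i-1)\tilde D_{m\bfe_i}$ is literally correct, and nonnegativity for $m\ge\max_i 1/(a_0c_i)$ finishes the argument. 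In particular, your ``principal obstacle'' of securing $c_i>0$ from bigness via interior lattice points is unnecessary: positivity comes for free from properness of $(X,M_0)$, exactly as in the paper's Corollary~\ref{corollary: optimal choice alpha 2}.
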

\begin{proof}
	For any proper divisorial pair $(X,\overline{M})$ satisfying $a((X,M),L)=a((X,\overline{M}),L)$, there are positive integers $m_1,\dots,m_n$ such that $m_1\bfe_1,\dots,m_n\bfe_n\in \overline{\fM}$, and \cite[Proposition 4.41]{Moe25conjecture} implies that the pair $(X,M')$ given by $\fM'=m_1\bfe_1,\dots,m_n \bfe_n\}\cup \fM$ has the same Fujita invariant as $(X,\overline{M})$ and $(X,M)$. Now let  $m\geq m_1,\dots,m_n$ be an integer and let $(X,\tilde{M})$ be the pair given by $\tilde{\fM}_{\cC}=\{m\bfe_1,\dots,m\bfe_n\}\cup \fM$. The pair $(X,M'')$ given by $\fM''= \fM'\cup \tilde{\fM}$ has the same Fujita invariant as $(X,M')$ by \cite[Lemma 4.39]{Moe25conjecture}, and \cite[Proposition 4.41]{Moe25conjecture} implies
	$$a((X,\tilde{M}),L) \leq a((X,M''),L)=a((X,M),L).$$
	Now since $(X,M)\subset (X,\tilde{M})$, we must have $a((X,\tilde{M}),L)=a((X,M),L)$ by \cite[Proposition 4.41]{Moe25conjecture}.
\end{proof}
For a pair $(X,\tilde{M})$ as in Proposition \ref{prop: choice quasi-proper pair}, there is an inclusion $\Gamma_{M}\subset \Gamma_{\tilde{M}}$ giving a restriction homomorphism  $\Pic(X,\tilde{M})\rightarrow \Pic(X,M)$ as in \cite[Definition 4.27]{Moe25conjecture}. We can view the condition $a((X,M),L)=a((X,\tilde{M}),L)$ as the statement that the adjoint divisor of $L$ with respect to $(X,M)$ is the restriction of the adjoint divisor of $L$ with respect to $(X,\tilde{M})$.

\begin{example}
	Let $X$ be a rationally connected smooth proper variety such that $-K_X$ is big, and let $D$ be a strict normal crossings divisor on $X$ which is rigid. Then the pair $(X,M)$ corresponding to the integral points on the open $U=X\setminus D$ is quasi-proper for any any big and nef $\mathbb{Q}$-divisor class $L=-K_X+aD$ with $a>-1$. We can take the proper pair $(X,\tilde{M})$ to be the pair corresponding to the Darmon points on $\left(X, \sum_{i=1}^n \left(1-\frac{1}{m}\right)D_i\right)$, where $D_1,\dots,D_n$ are the irreducible components of $D$ and $m$ is a positive integer such that $-1+\frac{1}{m}\leq a$. In particular, if $a\geq 0$, then we can take $(X,\tilde{M})$ to be the trivial pair. This follows from the rigidity of $D$ combined with the simple calculation
	$$\pr^*_{\tilde{M}}L+K_{(X,\tilde{M})}=\sum_{i=1}^n (am+(m-1))\tilde{D}_{m\bfe_i},$$
	which corresponds to $\sum_{i=1}^n \left(a+1-\frac{1}{m}\right)D_{i}$ under the identification in \cite[Example 4.25]{Moe25conjecture}. This implies $a((X,M),L)=a((X,\tilde{M}),L)=1$, as desired.
	
\end{example}
\begin{example}
	As a special case of the previous example, we can take $X=\Bl_{(0:0:1)}\mathbb{P}^2$, let $D_1$ be the exceptional divisor and take $\fM=\{0\}$. If $a\in (-1,1]\cap \mathbb{Q}$, then the $\mathbb{Q}$-divisor class $L=K_X+aD_1$ is big and nef, while $(X,M)$ is quasi-proper with respect to $L$.
\end{example}	
	\subsection{Fans and universal torsors for pairs} \label{section: universal torsor}
	In \cite[Section 11]{Sal98}, Salberger verifies Manin's conjecture for split toric varieties with the anticanonical height. In order to achieve this, he reduces the counting problem to estimating the volume of a domain $D(B)\subset Y(\mathbb{R})$ in the real locus of the universal torsor $Y$ of the variety.
	Afterwards, he uses the fan $\Sigma$ of $X$ to give a partition of $D(B)$ into simpler pieces: $D(B)=\bigcup_{\sigma\in \Sigma_{\max}}D(B,\sigma)$.
	In order to adapt the proof of Salberger to toric pairs, we thus need appropriate analogues of the fan and of the universal torsor for toric pairs.
	
	Using the map $\phi$ from Definition \ref{def: invariants} and the pair $(X,M)$, we subdivide the fan of $X$.
	\begin{notation} \label{notation: fan toric pair}
		Let $(X,M)$ be a toric pair over a field $K$. We let $(X,\overline{M})$ be a proper toric pair with $\overline{\fM}=\fM\cup\{d_i\mathbf{e}_i\mid m\mathbf{e}_i\not\in\fM \text{ for all } m\in \mathbb{N}^*\}$, where $\mathbf{e}_i\in \mathbb{N}^n$ denotes the $i$-th basis vector and $d_1,\dots,d_n$ are positive integers such that $\Gamma_M\subset \Gamma_{\overline{M}}$. Choose a simplicial fan $\Sigma_{\overline{M}}$ refining $\Sigma$ with rays
		$$\Sigma_{\overline{M}}(1)=\{\rho_\bfm\mid \bfm\in \Gamma_{\overline{M}}\},$$
		where $\rho_\bfm=\mathbb{R}_{\geq 0} \phi(\bfm)$. Here we recall that $\phi\colon \mathbb{N}^n\rightarrow N$ is the homomorphism given by $(m_1,\dots, m_n) \mapsto \sum_{i=1}^n m_i n_{\rho_i}$.
		Let $\Sigma_M\subset \Sigma_{\overline{M}}$ be the subfan given by the cones whose rays lie in $\{\rho_\bfm\mid \bfm\in \Gamma_M\}$.
		
		Similarly, for a maximal cone $\sigma\in \Sigma_{\overline{M}}$, let $(X,M(\sigma))$ be the toric pair given by $$\overline{\fM}=\fM\cup\{d_i\mathbf{e}_i\mid \rho_i\subset \sigma(1),\, m\mathbf{e}_i\not\in\fM \text{ for all } m\in \mathbb{N}^*\},$$ and we set $\Sigma_{M(\sigma)}\subset \Sigma_{\overline{M}}$ to be the subfan given by the cones whose rays lie in $\{\rho_\bfm\mid \bfm\in \Gamma_M\}\cup \sigma(1)$.
	\end{notation}
	Note that the fan $\Sigma_{\overline{M}}$ does not depend on the choice of the integers $d_1,\dots,d_n$ defining the pair $(X,\overline{M})$. 
	\begin{remark}
		If $(X,M)$ is a pair which is quasi-proper with respect to some $\mathbb{Q}$-divisor class $L$ and the integers $d_1,\dots,d_n$ are chosen sufficiently large, we have $a((X,M),L)=a((X,\overline{M}),L)$ by Proposition \ref{prop: choice quasi-proper pair}.
	\end{remark}
	\begin{remark}
		For a toric pair $(X,M)$ with a choice of a pair $(X,\overline{M})$ as above, there may be different non-isomorphic choices for the fan $\Sigma_{\overline{M}}$. Nevertheless, for many pairs, such as pairs corresponding to Campana points, there is only one such choice. For the purposes of this article, the choice of the fan is not important.
	\end{remark}
	Note that $\Sigma_{\overline{M}}$ is a complete fan, since $\Sigma$ is as well. Furthermore, if $(X,M)$ is proper, then $(X,\overline{M})=(X,M)$ so $\Sigma_{M}=\Sigma_{\overline{M}}$.
	
	We will also define universal torsors of toric pairs.
	Consider the morphism $f\colon \mathbb{A}_K^{\Gamma_M}\rightarrow \mathbb{A}_K^n$ given by sending $(x_\bfm)_{\bfm\in \Gamma_M}$ to $(y_1,\dots,y_n)$ where $y_i=\prod_{\bfm\in \Gamma_M}x_{\bfm}^{m_i}$ for all $i\in \{1,\dots,n\}$.
	\begin{definition} \label{def: universal torsor pair}
		We define the \textit{universal torsor $Y_M$ of $(X,M)$} as the open toric subvariety $Y_M=f^{-1}Y$ of $\mathbb{A}_K^{\Gamma_M}$, where $Y\rightarrow X$ is the universal torsor of $X$ as defined in Section \ref{section: Cox coordinates}. Let $U_M=\mathbb{G}_m^{\Gamma_M}$ be the dense torus of $Y_M$. The restriction of $Y_M\rightarrow X$ to $U_M\rightarrow U$ is a homomorphism of dense tori, and we write $T_M\subset Y_M$ for the kernel of this homomorphism.
	\end{definition}
	We call $Y_M$ the universal torsor of $(X,M)$, as it plays an analogous role to the universal torsor of $X$ in Salberger's work \cite{Sal98}.
	\begin{remark}
		By Proposition \ref{prop: Picard group toric pair}, the group variety $T_M$ is isomorphic to $\Hom(\Pic(X,M),\mathbb{G}_m)$. Therefore it can be seen as an analogue of the Picard torus of a toric variety, but it need not be a torus as $\Pic(X,M)$ can have torsion.
		If $(X,M)$ is a proper pair, then $f$ is surjective, so the natural morphism $Y_M\rightarrow X$ is surjective as well. Furthermore, Proposition \ref{prop: Picard group toric pair} implies that the restriction $U_M\rightarrow U$ is a $T_M$-torsor.
	\end{remark}
	For toric pairs which are not proper, the morphism $Y_M\rightarrow X$ need not be dominant and the analogy with the universal torsor of a toric variety partially breaks down, as the following example shows. We still maintain the same terminology however, to avoid unnecessary case distinctions.
	\begin{example}
		Let $X=\mathbb{P}^1$ and $\fM=\{(0,0)\}$, i.e., the pair corresponding to integral points on $\mathbb{G}_m$. Then $Y_M$ is just a point and the map $Y_M\rightarrow X$ is given by sending the point to $(1:1)$.
	\end{example}
	
	\section{\texorpdfstring{$\cM$}{M}-points of bounded height on toric varieties} \label{section: M points bounded height}
	\subsection{Heights} \label{section: heights on toric variety}
	Let $X$ be a smooth proper split toric variety over $\mathbb{Q}$ with fan $\Sigma$. Any $\mathbb{Q}$-divisor class $L$ on $X$ naturally induces a height function $H_{L}$ on the torus $U\subset X$, as defined for divisor classes by Batyrev and Tschinkel in \cite[Definition 2.1.7]{BaTs95}. In this section, we describe this height using its description given in \cite[Section 6.3]{PiSc24}.
	
	
	We can represent a $\mathbb{Q}$-divisor class $L$ by a $\mathbb{Q}$-divisor
	$$D=a_1D_1+\dots+a_nD_n,$$
	for some rational numbers $a_1,\dots,a_n\in \mathbb{Q}$. For a maximal cone $\sigma\in \Sigma$, write $$\mu_D(\sigma)=a_1\mu_{D_1}(\sigma)+\dots+a_n \mu_{D_n}(\sigma),$$ where $\mu_{D_i}(\sigma)\in N^\vee$ is the unique character of $U$ such that $\chi^{-\mu_{D_i}(\sigma)}$ generates $\mathcal{O}(D_i)$ on $U_\sigma$.
	Let $\sigma_1,\dots, \sigma_k$ be the maximal cones in the fan $\Sigma$. For a ray $\rho\in \Sigma$, we write $n_\rho$ for the corresponding ray generator.
	For $i=1,\dots,n$ and $j=1,\dots,k$ we define
	\begin{equation} \label{Equation: l^{(i)}}
		l^{(i)}(\mathbf{e}_j)=a_i-\langle \mu_D(\sigma_j),n_{\rho_i}\rangle,
	\end{equation}
	and 
	$$l^{(i)}(\bfs)=\sum_{j=1}^k l^{(i)}(\mathbf{e}_j)s_j$$
	for $\bfs\in \mathbb{C}^k$, where we recall that $\langle \cdot,\cdot\rangle\colon N^\vee_{\mathbb{Q}}\times N_{\mathbb{Q}}\rightarrow \mathbb{Q}$ is the natural pairing.
	
	Note that $L(\sigma_j):=\sum_{i=1}^n l^{(i)}(\mathbf{e}_j)D_i$ is $\mathbb{Q}$-linearly equivalent to $L$ by definition.
	All coefficients $l^{(i)}(\mathbf{e}_j)$ are nonnegative when $L$ is nef, as the following proposition shows.
	\begin{proposition}
		Let $L$ be a $\mathbb{Q}$-divisor on $X$. Then the following holds for any maximal cone $\sigma_j\in \Sigma$:
		\begin{enumerate}
			\item If $\rho_i\subset \sigma_j$, then $l^{(i)}(\mathbf{e}_j)=0$.
			\item If $L$ is nef, then $L$ is semiample and $l^{(i)}(\mathbf{e}_j)\geq 0$ for all $i=1,\dots, n$, so $L(\sigma_j)$ is effective.
		\end{enumerate}
	\end{proposition}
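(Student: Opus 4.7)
The plan is to treat the two parts separately.

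For part~(1), I would unwind the definition of $\mu_D(\sigma_j)$ directly. The defining property that $\chi^{-\mu_{D_i}(\sigma_j)}$ generates $\cO(D_i)$ on $U_{\sigma_j}$ translates to the equality of Cartier divisors $D_i|_{U_{\sigma_j}} = \div(\chi^{\mu_{D_i}(\sigma_j)})|_{U_{\sigma_j}}$; indeed, a local generator of an invertible sheaf is a nowhere-vanishing section, so $\div(\chi^{-\mu_{D_i}(\sigma_j)}) + D_i|_{U_{\sigma_j}} = 0$. Taking the $\mathbb{Q}$-linear combination with coefficients $a_i$ yields $D|_{U_{\sigma_j}} = \div(\chi^{\mu_D(\sigma_j)})|_{U_{\sigma_j}}$. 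The torus-invariant prime divisors meeting $U_{\sigma_j}$ are exactly those $D_i$ with $\rho_i \subset \sigma_j$, and the vanishing order of $\chi^{\mu}$ along $D_i$ equals $\langle \mu, n_{\rho_i}\rangle$. Comparing coefficients of $D_i|_{U_{\sigma_j}}$ for $\rho_i \subset \sigma_j$ gives $a_i = \langle \mu_D(\sigma_j), n_{\rho_i}\rangle$, which is precisely $l^{(i)}(\mathbf{e}_j) = 0$.

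For part~(2), I would first clear denominators by choosing an integer $k\geq 1$ with $kD$ Cartier, reducing to the integral case. I would then invoke the classical toric theorem (see e.g.\ \cite[Theorem 6.3.12]{CLS11}) that every nef Cartier divisor on a complete toric variety is basepoint free, which in particular gives the semiampleness of $L$. Equivalently, this theorem says that for such a nef Cartier divisor $D' = \sum b_i D_i$ and every maximal cone $\sigma$, the character $\chi^{-\mu_{D'}(\sigma)}$ (which generates $\cO(D')$ on $U_\sigma$) extends to a global section of $\cO(D')$. Global sections of $\cO(D')$ correspond to $m \in N^\vee$ with $\div(\chi^m) + D' \geq 0$, that is $\langle m, n_{\rho_i}\rangle + b_i \geq 0$ for all~$i$. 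Applying this with $D' = kD$ and $m = -\mu_{kD}(\sigma_j) = -k\mu_D(\sigma_j)$ yields $k\bigl(a_i - \langle \mu_D(\sigma_j), n_{\rho_i}\rangle\bigr) \geq 0$ for every~$i$; dividing by $k$ gives $l^{(i)}(\mathbf{e}_j) \geq 0$, so $L(\sigma_j) = \sum_i l^{(i)}(\mathbf{e}_j)D_i$ is effective.

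The only non-elementary input is the classical toric result that nef Cartier divisors on a complete toric variety are globally generated; beyond that, both parts reduce to bookkeeping with the support function and its local trivializations, and I do not anticipate any serious obstacle.
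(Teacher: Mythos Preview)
Your proposal is correct and follows essentially the same route as the paper: part~(1) is exactly the unwinding of the definition that the paper alludes to, and part~(2) invokes the same result \cite[Theorem 6.3.12]{CLS11} for semiampleness, with your explicit argument for nonnegativity being the content of the reference \cite[Proposition 8.7(a)]{Sal98} that the paper cites.
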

	\begin{proof}
		The first part follows directly from the definition of $l^{(i)}(\mathbf{e}_j)$. If $L$ is nef, then it is semiample by \cite[Theorem 6.3.12.]{CLS11}, and the nonnegativity of $l^{(i)}(\mathbf{e}_j)\geq 0$ is proved as in \cite[Proposition 8.7(a)]{Sal98}.
	\end{proof}
	We will now assume that $L$ is big and nef. As in \cite[Section 6]{PiSc24}, we define the function
	$$\bfx^{L(\sigma_j)}:=\prod_{i=1}^n x_i^{l^{(i)}(\mathbf{e}_j)},$$
	for every $j=1,\dots, k$ and $(x_1,\dots,x_n)\in Y(\mathbb{Q})$. Now \cite[Proposition 6.10]{PiSc24} implies that the height of a point $(x_1\colon \dots\colon x_n)\in X(\mathbb{Q})$ with respect to $L$, as defined in \cite[Section 6.3]{PiSc24}, is given as
	$$H_{L}(\bfx)=\prod_{v\in \Omega_{\mathbb{Q}}}\max(|\bfx^{L(\sigma_1)}|_{v},\dots, |\bfx^{L(\sigma_k)}|_{v}),$$
	where $\bfx=(x_1,\dots,x_n)\in Y(\mathbb{Q})$. Let $\cY\rightarrow \cX$ be the universal torsor of $\cX$ as introduced in Section \ref{section: Cox coordinates}.
	If we choose the coordinate representatives $(x_1,\dots,x_n)$ to be integers representing a $\mathbb{Z}$-integral point on $\cY$, then the formula for the height simplifies to 
	$$H_{L}(\bfx)=\max(|\bfx^{L(\sigma_1)}|,\dots, |\bfx^{L(\sigma_k)}|).$$
	
	
	\subsection{General asymptotics and the leading constant}
	In this section we present Theorem \ref{theorem: full asymptotics}, which will describe the asymptotic behavior of the toric counting function $N_{(X,M),L,S}(B)$ on a quasi-proper pair. To describe the leading constant appearing in the asymptotic, we first recall the $\alpha$-constant of the pair relative to $L$ as defined in \cite[Definiton 8.2]{Moe25conjecture}.
	\begin{definition} \label{def: alpha-constant}
		Let $(X,M)$ be a smooth toric pair, and let $L$ be a big and nef $\mathbb{Q}$-divisor on $X$ which is toric adjoint rigid with respect to $(X,M)$. Let $E$ be the unique effective $\mathbb{Q}$-divisor on $(X,M)$ with $\mathbb{Q}$-linear equivalence class $a((X,M),L)\pr_M^* L+K_{(X,M)}$, and let $(X,M^\circ)\subset (X,M)$ be the pair such that $\fM^\circ\setminus \{\mathbf{0}\}=\Gamma_{M^\circ}$ is the set of $\bfm\in \Gamma_M$ such that the associated divisor $\tilde{D}_{\bfm}$ is not contained in the support of $E$.
		
		Let $\Lambda=\Eff^1(X,M^\circ)$ be the effective cone, and let $\Lambda^\vee\subset \Pic(X,M^\circ)^\vee_{\mathbb{R}}$ be its dual cone. Then the \textit{$\alpha$-constant} of the pair $(X,M)$ with respect to $L$ is
		$$\alpha((X,M),L):=\frac{1}{\#\Pic(X,M^{\circ})_{\mathrm{torsion}}} \int_{\Lambda^\vee} e^{-\langle \pr_{M^\circ}^*(L),\bfx\rangle}\d \bfx,$$
		where the integral is taken with respect to the Lebesgue measure on $\Pic(X,M^\circ)^\vee_{\mathbb{R}}$, normalized by the lattice $\Pic(X,M^\circ)^\vee\subset \Pic(X,M^\circ)^\vee_{\mathbb{R}}$.
	\end{definition}
	Now we will state the main result of this article, which gives an asymptotic formula for the number of $\cM$-points of bounded height on a quasi-proper toric pair $(X,M)$. If $L$ is adjoint rigid, then we furthermore give a simple description of the leading constant. In Theorem \ref{theorem: geometric interpretation constant}, we give a geometric interpretation of this constant, which shows that it is in agreement with \cite[Conjecture 1.4]{Moe25conjecture}. Similarly, we also describe the leading constant if $L$ is toric adjoint rigid.
	\begin{theorem} \label{theorem: full asymptotics}
		Let $X$ be a proper split toric variety over $\mathbb{Q}$, let $L\in \Pic(X)_{\mathbb{Q}}$ be a big and nef $\mathbb{Q}$-divisor class and let $S$ be a positive integer. Let $(X,M)$ be a smooth toric pair which is quasi-proper with respect to $L$, and let $(\cX,\cM)$ be its toric integral model. Then there exists $\theta>0$ and a polynomial $Q$ of degree $b(\mathbb{Q}, (X,M), L)-1$ such that
		$$N_{(X,M),L,S}(B)=B^{a((X,M),L)} (Q(\log B)+O(B^{-\theta})),$$
		as $B\rightarrow \infty$.
		
		Assume that either $L$ is adjoint rigid with respect to $(X,M)$, or $S=1$ and $L$ is toric adjoint rigid with respect to $(X,M)$.
		Let $D=a_1D_1+\dots+a_nD_n$ be the unique torus-invariant $\mathbb{Q}$-divisor with $\mathbb{Q}$-linear equivalence class $a((X,M),L)L$ such that $\pr_M^*D +D_{(X,M)}$ is effective.
		The leading coefficient of $Q$ is given by
		$$C=\frac{\alpha((X,M),L)}{a((X,M),L)(b(\mathbb{Q},(X,M),L)-1)!} C_{\infty} \prod_{p\, \mathrm{prime}} C_p,$$
		where
		$$C_p=(1-p^{-1})^{\#\Gamma_{M^\circ}}\sum_{\bfm\in \fM_{\red}}p^{-a_{\bfm}}$$
		for all prime numbers $p$ not dividing $S$, and
		$$C_p=(1-p^{-1})^{\#\Gamma_{M^\circ}}\sum_{\bfm\in \mathbb{N}^n_{\red}}p^{-a_{\bfm}}$$
		for all prime numbers $p$ dividing $S$, where $a_{\bfm}=a_1 m_1+\dots+a_nm_n$.
		
		If $L$ is adjoint rigid, then $\#\Gamma_{M^\circ}=\dim(X)+b(\mathbb{Q},(X,M),L)$ and
		
		$$C_\infty=2^{\dim(X)}\sum_{\sigma\in \Sigma_{\max}} \prod_{\substack{i=1 \\ \rho_i\subset \sigma}}^n \frac{1}{a_i}.$$
		If $L$ is toric adjoint rigid, then 
		$$C_{\infty}=2^{\dim(X)} \sum_{\sigma\in \Sigma_{\overline{M^\circ},\max}}I(\sigma)C_{\infty}(\sigma),$$
		where 
		$\Sigma_{\overline{M^\circ},\max}$ is the collection of maximal cones in the fan $\Sigma_{\overline{M^\circ}}$ introduced in Notation \ref{notation: fan toric pair}, $I(\sigma)$ is the index of the subgroup $\langle [\tilde{D}_{\bfm}]  \mid \phi(\bfm)\not \in \sigma \rangle$ in $\Pic(X,M^\circ(\sigma))$ and
		$$C_{\infty}(\sigma)=\frac{\# \Pic(X,M^\circ)_{\mathrm{torsion}}}{\# \Pic(X,M^\circ(\sigma))_{\mathrm{torsion}}}\Volume(Z_\sigma)\dim(Z_\sigma)!.$$ Here $Z_\sigma$ is the polytope consisting of all linear functions $$f\colon V''=\langle [\tilde{D}_{\bfm}]\in \Pic(X,M^\circ(\sigma))\mid \bfm\in \Gamma_{M^\circ(\sigma)}\setminus \Gamma_{M^\circ} \rangle\rightarrow \mathbb{R}$$ satisfying $f([\tilde{D}_{\bfm}])\geq 0$ for all $\bfm\in \Gamma_{M^\circ(\sigma)}\setminus \Gamma_{M^\circ}$ and $f\left(\sum_{\bfm\in \Gamma_{M^\circ(\sigma)}\setminus \Gamma_{M^\circ}}[\tilde{D}_{\bfm}]\right)\leq 1$, and the volume is computed with respect to the measure $\nu''$ such that $V''/\Lambda''$ has volume $1$, where $\Lambda''$ is the image of $\Hom(\Pic(X,M^\circ(\sigma)),\mathbb{Z})$ in $V''$.
	\end{theorem}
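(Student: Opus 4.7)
The proof plan follows the universal torsor strategy of Salberger \cite{Sal98} and de la Bretèche \cite{Bre01Toric}, adapted to the setting of $\cM$-points. The first step is to use Cox coordinates together with the multiplicativity of $p$-adic valuations to bijectively parametrize the $\cM$-points in $U(\mathbb{Q})$ by tuples $(x_{\bfm})_{\bfm \in \Gamma_M}$ of nonzero integers, where $x_{\bfm}$ records the product of primes appearing in the Cox coordinates with local multiplicity type $\bfm \in \Gamma_M$. Concretely, setting the Cox coordinates to be $a_i = \prod_{\bfm \in \Gamma_M} x_{\bfm}^{m_i}$, the point lies in $(\cX,\cM)(\mathbb{Z}[1/S])$ precisely when the $x_{\bfm}$ satisfy a coprimality condition reflecting that the Cox vector avoids the irrelevant locus $\cZ$ (away from primes in $S$). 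Up to a factor from sign ambiguities, this identifies the set of $\cM$-points with a subset of $\mathbb{Z}$-points of the universal torsor $Y_M$ of Definition \ref{def: universal torsor pair}, and the height condition $H_L(P) \leq B$ becomes $\max_j |\bfx^{L(\sigma_j)}| \leq B$.

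Next, I would encode the coprimality conditions by Möbius inversion and assemble the counting function into a multivariate Dirichlet series $F(\bfs)$ in variables $s_{\bfm}$ indexed by $\bfm \in \Gamma_M$. A direct computation shows that its local factor at a prime $p \nmid S$ equals $(1 - p^{-1})^{\#\Gamma_{M^\circ}}\sum_{\bfm \in \fM_{\red}} p^{-\sum_{\bfm'} s_{\bfm'} m_{\bfm'}}$, with the analogous expression at primes dividing $S$, matching the $C_p$ in the theorem statement. The polar structure of $F$ on the relevant critical hyperplane is determined by the facets of the effective cone $\Eff^1(X,M^\circ)$ containing the adjoint class $a((X,M),L)\pr_M^* L + K_{(X,M)}$, whose codimension is precisely $b(\mathbb{Q},(X,M),L)$. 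Applying \cite[Théorème 1]{Bre01Sum} to $F$, together with standard height zeta function estimates in vertical strips, produces the asymptotic $B^{a((X,M),L)}(Q(\log B) + O(B^{-\theta}))$ with $\deg Q \leq b(\mathbb{Q},(X,M),L) - 1$.

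To upgrade the degree bound to an equality when $L$ is not adjoint rigid, I would invoke the quasi-proper hypothesis: by Proposition \ref{prop: choice quasi-proper pair}, I may choose a proper pair $(X,\tilde{M}) \supset (X,M)$ with $a((X,\tilde{M}),L) = a((X,M),L)$ and $\Gamma_{M} \subset \Gamma_{\tilde{M}}$. Running the same analysis for $(X,\tilde{M})$ in the (toric) adjoint rigid case produces a strictly positive leading coefficient, which, upon specializing the extra coordinates $x_{\bfm}$ for $\bfm \in \Gamma_{\tilde{M}}\setminus \Gamma_M$ to $\pm 1$ and applying the refined Tauberian statement of \cite[Théorème 1]{Bre01Sum}, forces $\deg Q = b(\mathbb{Q},(X,M),L) - 1$ for $(X,M)$ as well.

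Finally, to compute the leading constant, I would factor its expression into the convergent Euler product $\prod_p C_p$ and an archimedean density $C_\infty$ coming from the real-analytic volume of the domain $\{\max_j |\bfx^{L(\sigma_j)}| \leq 1\}$ on $Y_{M^\circ}(\mathbb{R})$. In the adjoint rigid case, this domain decomposes according to the fan $\Sigma$, each maximal cone $\sigma$ contributing $\prod_{\rho_i \subset \sigma} a_i^{-1}$ by an explicit change of variables, yielding the stated formula together with the Peyre-style normalization $\alpha((X,M),L)/(a(b-1)!)$. The main obstacle is the toric adjoint rigid case: here the real locus of $Y_{M^\circ}$ only covers part of $X(\mathbb{R})$ and the natural partition uses the refined fan $\Sigma_{\overline{M^\circ}}$ rather than $\Sigma$. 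For each maximal cone $\sigma \in \Sigma_{\overline{M^\circ},\max}$ one must identify the piece of the domain it parametrizes as a linear image of the polytope $Z_\sigma$, correct for the discrepancy between the sublattice generated by the rays of $\sigma$ in $\Pic(X,M^\circ(\sigma))$ and the full lattice (giving the index $I(\sigma)$), and track how the torsion of $\Pic(X,M^\circ)$ versus $\Pic(X,M^\circ(\sigma))$ enters the volume normalization. The careful bookkeeping of these three contributions across all $\sigma$, and verifying that they recombine into the claimed expression for $C_\infty$, is the most delicate part of the argument.
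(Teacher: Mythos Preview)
Your overall architecture (Cox coordinates $\to$ multiplicative Dirichlet series $\to$ de la Bretèche's Tauberian theorems $\to$ archimedean volume split along a fan) is exactly the paper's, and your description of the leading-constant computation in the toric adjoint rigid case is accurate. But two of your technical choices diverge from the paper in ways that create genuine gaps.

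\textbf{The Dirichlet series and parametrization.} You propose to parametrize $\cM$-points by tuples $(x_{\bfm})_{\bfm\in\Gamma_M}$ via $a_i=\prod_{\bfm}x_{\bfm}^{m_i}$ and to build a Dirichlet series in the variables $s_{\bfm}$. This is not a bijection for general $\fM$: a prime $p$ with $\mult_p(\bfd)\in\fM_{\red}\setminus\Gamma_M$ has no slot, and elements of $\fM_{\red}$ typically decompose non-uniquely as sums from $\Gamma_M$. More seriously, even if it were, de la Bretèche's Th\'eor\`eme~1 sums over $r_1\leq B,\dots,r_k\leq B$, so the variables must be chosen so that this box condition \emph{is} the height condition. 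The paper does this by taking the $k$ variables to be indexed by the maximal cones of $\Sigma$, setting $r_j=\bfd^{L(\sigma_j)}$, so that $\max_j r_j\leq B$ is literally $H_L(\bfd)\leq B$. The series is then rewritten as a sum over the original Cox tuples $\bfd\in(\mathbb{N}^*)^n$ weighted by the multiplicative indicator $\chi(\bfd)$ of the $\cM$-condition; the Euler factor $F_p(\bfs)=\sum_{\bfm\in\fM_{\red}}p^{-l_{\bfm}(\bfs)}$ (no $(1-p^{-1})$ factor yet) drops out of multiplicativity of $\chi$, with $l_{\bfm}(\bfs)=\sum_i m_i l^{(i)}(\bfs)$ built from the linear forms $l^{(i)}$ attached to the height. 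The linear forms $\mathscr{L}=\{l_{\bfm}:\bfm\in\Gamma_{M^\circ}\}$ then enter only as the poles one must cancel to satisfy (P2)--(P3), not as coordinate directions.

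\textbf{The degree upgrade.} You pass to a \emph{larger} proper pair $(X,\tilde M)\supset(X,M)$ and specialize the extra variables. But $\tilde{\fM}\supset\fM$ gives $N_{(X,\tilde M),L,S}(B)\ge N_{(X,M),L,S}(B)$, which is the wrong inequality for forcing the degree of $Q$ from below; and there is no reason the $b$-invariants of $(X,M)$ and $(X,\tilde M)$ coincide. The paper goes the other way: it passes to the \emph{smaller} pair $(X,M^\circ)\subset(X,M)$ of Definition~\ref{def: alpha-constant}, for which $L$ is automatically toric adjoint rigid and $a,b$ are unchanged, proves the full asymptotic with a strictly positive leading coefficient there via de la Bretèche's Th\'eor\`eme~2, and then uses $N_{(X,M),L,S}(B)\ge N_{(X,M^\circ),L,1}(B)$ to force $\deg Q=b(\mathbb{Q},(X,M),L)-1$.
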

	The proof of Theorem \ref{theorem: full asymptotics} is based on the proofs of Salberger \cite{Sal98} and de la Bretèche \cite{Bre01Toric} for Manin's conjecture for split toric varieties with the anticanonical height. 
\subsection{Examples}
	We illustrate the Theorem \ref{theorem: full asymptotics} by applying it to a few examples.
	\begin{example} \label{example: counting images worked out}
		Let $f\colon \mathbb{P}^2\dashrightarrow \mathbb{P}^2$ be the rational map given by $(x:y:z)\mapsto (x^2:y^2:xz)$ as in Example \ref{example: counting images}. We will derive the given asymptotic formula for the number of rational points of bounded height in the image. If $(\mathbb{P}^2,M)$ is the toric pair given by $\fM=\mathbb{N}^2$, then we compute that $f_*\fM$ is the monoid generated by $(2,0,0),(0,2,0),(0,0,1),(1,1,0)$ and thus $\Gamma_{f_*M}=\{(2,0,0),(0,2,0),(0,0,1),(1,1,0)\}$. The Picard group $\Pic(X,f_*M)\cong \mathbb{Z}^2\oplus \mathbb{Z}/2\mathbb{Z}$ is generated by $\tilde{D}_{(2,0,0)}, \tilde{D}_{(1,1,0)}$ together with the torsion element $\tilde{D}_{(2,0,0)}-\tilde{D}_{(0,2,0)}$. Furthermore, we have that $$D_{(\mathbb{P}^2,f_*M)}=\pr^*_{f_* M}\left(\tfrac{1}{2}D_1+\tfrac{1}{2}D_2+D_3\right),$$
		so $a((\mathbb{P}^2,f_*M),[D_1])=2$ and $b(\mathbb{Q},(\mathbb{P}^2,f_*M),[D_1])=2$. Now Theorem \ref{theorem: full asymptotics} shows that the number of $f_*\cM$-points of height at most $B$ tends to $B\left(Q(\log B)+O\left(B^{-\theta}\right)\right)$, where $Q$ is a linear polynomial with leading coefficient $$C=\frac{1/4}{2}\cdot 8\cdot \prod_{p\, \mathrm{prime}}(1-2p^{-2}+2p^{-3}-p^{-4}).$$ Now the asymptotic stated in Example \ref{example: counting images} follows by combining this result with Corollary \ref{corollary: image pair}, where we have $\Delta=2$ as $f$ has degree $2$.
	\end{example}
	\begin{example} \label{example: weak Campana points projective plane}
		In \cite[Section 3.2.1]{PSTVA21}, the weak Campana points on $(\mathbb{P}^2, \frac{1}{2} D_1+\frac{1}{2} D_2+\frac{1}{2} D_3)$ over $\mathbb{Z}$ are considered and compared to the Campana points on the same Campana pair. The set of weak Campana points is
		$$\{(x:y:z)\mid x,y,z\in \mathbb{Z}\setminus\{0\}, \, \gcd(x,y,z)=1, \, xyz \text{ is squareful}\}\subset \mathbb{P}^2(\mathbb{Q}),$$
		while the set of Campana points is the subset consisting of the points $(x:y:z)$ such that each integer $x,y,z$ is squareful.
		In \cite[Propostition 3.6]{PSTVA21} it is proven that the number of weak Campana points with Weil height at most $B$ and $xyz\neq 0$ is at least $c_1B^{3/2}\log B$ as $B\rightarrow \infty$, for some constant $c_1>0$. Using Theorem \ref{theorem: full asymptotics}, we can compute a precise asymptotic for the number of weak Campana points of bounded height.
		Let $(\mathbb{P}^2,M)$ be the pair corresponding to the weak Campana points on $(\mathbb{P}^2, \frac{1}{2} D_1+\frac{1}{2} D_2+\frac{1}{2} D_3)$. For the divisor class $L=[D_1]\in \Pic(\mathbb{P}^2)$, \cite[Lemma 4.39]{Moe25conjecture} implies that the pair $(\mathbb{P}^2,M^{\circ})$ as in Definition \ref{def: alpha-constant} is contained in the proper pair $(X,M')$ given by $\fM'=\{(0,0,0), (2,0,0), (0,2,0),(0,0,2),(1,1,0),(1,0,1),(0,1,1)\}$. Since $\Gamma_{M'}=\fM'\setminus \{(0,0,0)\}$ and
		$$\pr_{M'}^* [D_1]=\frac{1}{3}\pr^*_{M'}[D_1+D_2+D_3]=\frac{2}{3}\sum_{\bfm\in \Gamma_{M'}}[\tilde{D}_{\bfm}]=-\frac{2}{3} K_{(X,M')},$$
		the adjoint divisor of $L$ with respect to $(X,M)$ is $0$ so
		we must have $(\mathbb{P}^2,M^\circ)=(\mathbb{P}^2,M')$. Furthermore, this description of $\pr_{M'}^* [D_1]$ also implies that $a((\mathbb{P}^2,M),[D_1])=\frac{3}{2}$ and $b(\mathbb{Q},(X,M),[D_1])=\rank \Pic(X,M^\circ)$.
		Since $D_1$, $D_2$ and $D_3$ are linearly equivalent to each other, the divisor $\tilde{D}_{(1,0,1)}$ is linearly equivalent to $2\tilde{D}_{(0,2,0)}-2\tilde{D}_{(0,0,2)}+\tilde{D}_{(1,1,0)}$ and similarly $\tilde{D}_{(0,1,1)}$ is linearly equivalent to $2\tilde{D}_{(2,0,0)}-2\tilde{D}_{(0,0,2)}+\tilde{D}_{(1,1,0)}$. By Proposition \ref{prop: Picard group toric pair} these are the only relations between torus-invariant prime divisors on $(X,M^\circ)$, so $\Pic(X,M^{\circ})\cong \mathbb{Z}^4$ is freely generated by $\tilde{D}_{(2,0,0)}$, $\tilde{D}_{(0,2,0)}$, $\tilde{D}_{(0,0,2)}$ and $\tilde{D}_{(1,1,0)}$, and thus $b(\mathbb{Q},(X,M),L)=4$.
		The effective cone of $(\mathbb{P}^2,M^{\circ})$ is generated by the divisors $\tilde{D}_{(2,0,0)},\tilde{D}_{(0,2,0)},\tilde{D}_{(0,0,2)}, \tilde{D}_{(1,1,0)}, \tilde{D}_{(1,0,1)},\tilde{D}_{(0,1,1)}$ by Proposition \ref{prop: effective cone generated by torus-invariant}.
		By subdividing the dual of the effective cone $\Eff^1(X,M)$ into two simplicial cones, we compute the $\alpha$-constant using \cite[Example 2.1]{Bar93}:
		$$\frac{\alpha((\mathbb{P}^2,M),[D_1])}{a((\mathbb{P}^2,M),[D_1])(b(\mathbb{Q},(X,M),[D_1])-1)!}=\frac{1}{48}.$$
		Since $a((\mathbb{P}^2,M))D_1$ is linearly equivalent to $\frac{1}{2}\left(D_1+D_2+D_3\right)$, we find
		$C_{\infty}=4\cdot 4\cdot 3=48$.
		
		Finally, for each prime,
		$$C_p=\left(1-p^{-1}\right)^6 \left(\frac{1-p^{-3/2}}{\left(1-p^{-1/2}\right)^3} -3p^{-1/2}\right).$$
		
		We conclude that
		$$\#\left\{(x:y:z)\in \mathbb{P}^{2}(\mathbb{Q})\,\middle\vert
		\begin{aligned} &\,x,y,z\in \mathbb{Z}\setminus\{0\},\, \gcd(x,y,z)=1, \\
			&\, xyz \text{ is squareful},\, \max(|x|,|y|, |z|)\leq B
		\end{aligned}
		\, \right\}
		= B^{3/2} (Q(\log B)+O(B^{-\theta}))$$
		as $B\rightarrow \infty$, where $\theta>0$ is a constant and $Q$ is a cubic polynomial with leading coefficient
		$$\prod_{p\, \mathrm{prime}} \left(1-p^{-1}\right)^6 \left(\frac{1-p^{-3/2}}{\left(1-p^{-1/2}\right)^3} -3p^{-1/2}\right)\approx 0.862.$$
		
	\end{example}
	\begin{remark}
		In the previous example, there is a more elementary method to see that there exists a constant $c>0$ such that for any real number $B>2$ there are at least $cB^{3/2} (\log B)^3$ tuples $(x:y:z)$ with $|x|,|y|,|z|\leq B$ and $xyz$ squareful and nonzero, which we will now give. For every choice of pairwise coprime integers $n_1,n_2,n_3,n_4,n_5,n_6$, if we set $$(x:y:z):=(n_1^2 n_5 n_6: n_2^2 n_4 n_6: n_3^2 n_4 n_5),$$
		then $xyz=\prod_{i=1}^6 n_i^2$ is a squareful number. The probability that $6$ positive integers less than a given bound are pairwise coprime is at least $c'=\prod_{p\, \mathrm{prime}} (1-\tfrac{5}{p})(1-\tfrac{1}{p})^5>0$.
	Thus the number $N(B)$ of points $(x:y:z)\in \mathbb{P}^2(\mathbb{Q})$ with $xyz$ squareful and nonzero and furthermore $\max(|x|,|y|,|z|)\leq B$ is at least $$c'\#\{(n_1,n_2,n_3,n_4,n_5,n_6)\in (\mathbb{N}^*)^6\mid \max(n_1^2 n_5n_6, n_2^2 n_4 n_6, n_3^2 n_4 n_5) \leq B\}.$$
	The integer $N(B)$ is equal to the integral $c' \idotsint_{\bfx \in A(B)} \d x_1 \dots \d x_6$, where $A(B)$ is the set of all $(x_1,\dots,x_6)\in [1,\infty)^6$ such that $\max(\lfloor x_1\rfloor ^2 \lfloor x_5\rfloor \lfloor x_6\rfloor, \lfloor x_2\rfloor ^2 \lfloor x_4\rfloor \lfloor x_6\rfloor, \lfloor x_3\rfloor^2 \lfloor x_4\rfloor \lfloor x_5\rfloor)\leq B$. By using the trivial upper bound $\lfloor x_i \rfloor\leq x_i$ for all $i\in \{1,2,3,4,5,6\}$ together with the change of variables $y_i=\log x_i$, $N(B)$ is at least
	$$c' \idotsint_{\mathbf{y} \in A'(\log B)} e^{y_1+\dots +y_6} \d y_1\dots \d y_6,$$
	where $A'(\log B)$ is the set of all $(x_1,\dots,x_6)\in [0,\infty)^6$ such that $\max(2y_1+ y_5+y_6, 2y_2+ y_4+ y_6, 2y_3+ y_4+ y_5)\leq \log B$. On the domain $A'(B)$ the maximum value attained by the integrand is $B^{3/2}$, which is attained on a three dimensional face $F(\log B)$ of $A'(\log B)$. Let $\mathbf{n}_1,\dots, \mathbf{n}_3\in A'(\log B)$ be vectors generating the normal space of $F(\log B)$ with respect to the standard inner product on $\mathbb{R}^6$. Let $F'(1)$ be an open of $F(1)\cap A'(1)$ which is bounded away from the boundary of $A'(1)$. Then there exists $\epsilon>0$ such that $V(1):=\{\mathbf{y}-a_1\mathbf{n}_1-a_2\mathbf{n}_2-a_3\mathbf{n}_3\in \mathbb{R}^6 \mid \mathbf{y}\in F'(1), a_1,a_2,a_3\in [0,\epsilon)\}$ is fully contained in $A'(1)$. Let $V(\log B)=\log B V(1)$. By the previous lower bound for $N(B)$, we see that
	$$N(B)\geq c' \idotsint_{\mathbf{y} \in V(\log B)} e^{y_1+\dots +y_6} \d y_1\dots \d y_6.$$
	The integral over $V(\log B)$ is equal to
	$$\iiint_{\mathbf{y} \in F'(\log B)}\iiint_{a_1,a_2,a_3\in [0, \epsilon]} B^{3/2}e^{\sum_{i=1}^6(-a_1n_{1,i}-a_2 n_{2,i}-a_3 n_{3,i})}\d a_1 \d a_2 \d a_3 \d \mu,$$
	where $\mu$ is the Lebesgue measure on $F'(\log B)$. This is in turn equal to $$\Volume(F'(\log B))B^{3/2}(1+o(1))=\Volume(F'(1))B^{3/2}(\log B)^3(1+o(1))$$ as $B\rightarrow \infty$, which gives the desired lower bound for $N(B)$.
\end{remark}
Using the description of the $b$-invariant for weak Campana points given in \cite[Theorem 1.7]{Moe25conjecture}, we can also determine the asymptotic growth for the number of points on projective space for which the product of the coordinates is an $m$-full number, generalizing Example \ref{example: weak Campana points projective plane}.
\begin{example} \label{example: weak Campana projective space points with equal weights}
	Let $m$ and $n$ be positive integers. We now explain why Theorem \ref{theorem: asymptotics proper toric} implies that 
	\begin{multline*}
		N(B)=\#\left\{(x_1:\dots: x_n)\in \mathbb{P}^{n-1}(\mathbb{Q})\middle\vert
		\begin{aligned} &\,x_1,\dots, x_n\in \mathbb{Z}\setminus \{0\},\, \gcd(x_1,\dots,x_n)=1, \\
			&\, \prod_{i=1}^n x_i \text{ is }m\text{-full},\, \max(|x_1|,\dots, |x_n|)\leq B
		\end{aligned}
		\, \right\}=\\ B^{n/m}(Q(\log B)+O(B^{-\theta}))
	\end{multline*}
	as $B\rightarrow \infty$,
	where $\theta>0$ is a constant and $Q$ is a polynomial of degree
	$$\binom{m+n-1}{n-1}-\binom{m-1}{n-1}-n.$$
	Note that $N(B)$ is the number of weak Campana points on the Campana pair $\left(\mathbb{P}^{n-1}, \sum_{i=1}^n \left(1-\frac{1}{m}\right)D_i\right)$ of height at most $B$, where the divisors $D_1,\dots,D_n$ are the coordinate hyperplanes. In particular, the log-anticanonical divisor class is given by $\sum_{i=1}^n \frac{1}{m}[D_i]$ and \cite[Theorem 1.7]{Moe25conjecture} implies that the Fujita invariant is $n/m$, while the $b$-invariant is equal to
	$$-n+\#\{(a_1,\dots,a_n)\in \mathbb{N}^n\mid a_1+\dots+a_n=m, \, \min(a_1,\dots,a_n)=0\}.$$
	The number of ways to write $m$ as a sum of $n$ (nonzero) integers is $\binom{m+n-1}{n-1}$ (respectively $\binom{m-1}{n-1}$), which gives the expression for the degree of $Q$. 
\end{example}

\subsection{Geometric interpretation of the constant}
Before giving the proof of Theorem \ref{theorem: full asymptotics}, we give a geometric interpretation for the leading constant obtained in the theorem in the case where $L$ is adjoint rigid with respect to $(X,M)$, by interpreting the constant as an adelic integral. In particular, we show that the leading constant in Theorem \ref{theorem: full asymptotics} agrees with \cite[Conjecture 1.4]{Moe25conjecture}.

\begin{theorem} \label{theorem: geometric interpretation constant}
	In the setting of Theorem \ref{theorem: full asymptotics}, assume that $L$ is adjoint rigid with respect to $(X,M)$. Then the leading coefficient of the polynomial $Q$ is given by
	
	$$C=\frac{\alpha((X,M),L)}{a((X,M),L) (b(\mathbb{Q},(X,M),L)-1)!}\int_{x\in (\cX,\cM)(\mathbf{A}_{\mathbb{Z}[1/S]})} \frac{1}{H_{a((X,M),L)L+K_X}(x)} \d \tau_{(X,M^\circ)},$$
	where $H_{a((X,M),L)L+K_X}$ is the toric height corresponding to the $\mathbb{Q}$-divisor class $a((X,M),L)L+K_X$ as defined in \cite[Definition 2.1.7]{BaTs95}, and the measure $\tau_{(X,M^\circ)}$ is the measure from \cite[\S 8]{Moe25conjecture} defined using the toric metric on the canonical divisor class $K_X$ as in \cite[Theorem 2.1.6]{BaTs95}. Furthermore, this agrees with \cite[Conjecture 1.4]{Moe25conjecture}.
	
	In particular, if $(X,M)$ is a pair corresponding to Campana points, then the constant is compatible with the prediction in \cite[\S 3.3]{PSTVA21}.
\end{theorem}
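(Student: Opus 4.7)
The plan is to identify the explicit product formula for the leading constant $C$ in Theorem \ref{theorem: full asymptotics} with the adelic integral against $\tau_{(X,M^\circ)}$ introduced in \cite[\S 8]{Moe25conjecture}, by decomposing both sides across the places of $\mathbb{Q}$. Since the toric height $H_{a((X,M),L)L+K_X}$ and the Tamagawa-type measure $\tau_{(X,M^\circ)}$ both admit explicit factorizations over the places of $\mathbb{Q}$, it will suffice to verify
\begin{equation*}
C_v \;=\; \int_{(\cX,\cM)(\mathbb{Z}_v)} H_{a((X,M),L)L+K_X,v}^{-1}\,\d\tau_{(X,M^\circ),v}
\end{equation*}
at every finite place together with an analogous identity at the archimedean one. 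The normalizing prefactor $\alpha((X,M),L)/(a((X,M),L)(b(\mathbb{Q},(X,M),L)-1)!)$ is then exactly the Peyre-style version of the $\alpha$-constant obtained from Definition \ref{def: alpha-constant} via the standard cone identity of \cite[Proposition 2.4.4]{BaTs95}, which matches the prefactor present in \cite[Conjecture 1.4]{Moe25conjecture}.

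For a finite prime $p$, I would stratify $(\cX,\cM)(\mathbb{Z}_p)\cap U(\mathbb{Q}_p)$ by the multiplicity $\mult_p(P)=\bfm$, which ranges over $\fM_{\red}$ if $p\nmid S$ and over all of $\mathbb{N}^n_{\red}$ if $p\mid S$. On each stratum, the local measure $\tau_{(X,M^\circ),p}$ is governed by the convergence factors indexed by $\Gamma_{M^\circ}$, producing the power $(1-p^{-1})^{\#\Gamma_{M^\circ}}$, while the remaining $p$-adic Haar mass combined with the monomial form of the local height contributes exactly $p^{-a_{\bfm}}$ with $a_{\bfm}=\sum_i a_i m_i$, by the height formula of Section \ref{section: heights on toric variety} applied to the representative $D=a_1D_1+\dots+a_nD_n$. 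Summing over $\bfm$ reproduces $C_p$.

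At the archimedean place I would pass to integral Cox coordinates on $\cY(\mathbb{R})$. The sign factor $2^{\dim X}$ emerges by combining the $2^{n-d}$ choices of Cox representatives for each rational point with the $2^n$ octant choices in $\mathbb{R}^n$, after absorbing the torus $\cG(\mathbb{R})$-action. Decomposing $X(\mathbb{R})\cap U(\mathbb{R})$ according to the maximal cones $\sigma\in\Sigma_{\max}$ and applying the max-of-monomials description of the toric height on the $\sigma$-chart, the local archimedean integral on the $\sigma$-piece reduces to $\prod_{\rho_i\subset\sigma}\int_1^{\infty} x_i^{-a_i-1}\,\d x_i = \prod_{\rho_i\subset\sigma} a_i^{-1}$, giving the stated formula for $C_\infty$. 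Reassembling the finite and archimedean factors yields the claimed adelic integral.

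The final step is the comparison with \cite[Conjecture 1.4]{Moe25conjecture}: since $X$ is smooth proper split and the Brauer-type obstruction factor is trivial in this setting, the conjectural leading constant reduces to the product of $\alpha((X,M),L)$ and the Tamagawa-type integral against $\tau_{(X,M^\circ)}$, matching what we have obtained; the compatibility with the Campana-points prediction in \cite[\S 3.3]{PSTVA21} is then automatic through the translation between the Campana framework and the $\cM$-point framework. I expect the main obstacle to be the careful bookkeeping of convergence factors at the finite places: one must verify that the transition from $(X,M)$ to $(X,M^\circ)$ removes exactly the divisors supported on the adjoint divisor, so that the count $\#\Gamma_{M^\circ}$ of convergence factors in $C_p$ matches the rank contribution in the definition of $\tau_{(X,M^\circ),p}$ without any spurious mismatch, and that the local Haar normalization used in \cite[\S 8]{Moe25conjecture} does not introduce a discrepancy from the lattice index appearing in Proposition \ref{prop: Picard group toric pair}.
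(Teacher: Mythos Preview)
Your overall strategy---factorize both the explicit constant and the adelic integral over places and match place by place---is the same as the paper's, and your archimedean computation via the cone decomposition is essentially equivalent to the paper's use of Salberger's sets $C_\sigma(\mathbb{R})\cong[0,1]^d$ (your $\int_1^\infty x_i^{-a_i-1}\,\d x_i$ and the paper's $\int_0^1 x_i^{a_i-1}\,\d x_i$ are related by $x_i\mapsto 1/x_i$).

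There are two places where your sketch is imprecise. First, at a finite prime the power $(1-p^{-1})^{\#\Gamma_{M^\circ}}$ does \emph{not} arise entirely from convergence factors. The convergence factor in $\tau_{(X,M^\circ),p}$ is $(1-p^{-1})^{b(\mathbb{Q},(X,M),L)}$, coming from $L(s,\Pic(X_{\overline{\mathbb{Q}}},M^\circ))=\zeta(s)^{b}$; the remaining $(1-p^{-1})^{\dim X}$ comes from the $\tau_{X,p}$-volume of the stratum $\{\mult_p=\bfm\}$, which is $(1-p^{-1})^{\dim X}p^{-\sum_i m_i}$. Dividing by the local height $p^{a_\bfm-\sum_i m_i}$ then gives $(1-p^{-1})^{\dim X}p^{-a_\bfm}$, and the two sources combine via the adjoint-rigid identity $\#\Gamma_{M^\circ}=\dim X+b(\mathbb{Q},(X,M),L)$. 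Your stated worry about a lattice-index discrepancy is therefore misplaced; the issue is only to keep straight which $(1-p^{-1})$'s come from where.

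Second, and more seriously, you assert that the Brauer-type factor in \cite[Conjecture 1.4]{Moe25conjecture} is trivial ``in this setting'' without argument. The paper actually proves this: one identifies $\Br_1(X,M^\circ)/\Br\mathbb{Q}$ with certain automorphic characters on $\mathbb{G}_m^d$, observes that both the local height and the $\cM$-condition are invariant under the maximal compact $\cK_v$, so only $\cK$-invariant characters can contribute to $\hat\tau$, and then invokes class field theory to see that the $\cK$-invariant torsion characters form a group isomorphic to $\Cl(\mathbb{Q})=0$. Without this step the agreement with the conjecture is not established, since a priori nontrivial Brauer classes could contribute to the predicted constant.
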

\begin{remark}
	Strictly speaking, Conjecture \cite[Conjecture 1.4]{Moe25conjecture} only applies to proper pairs. However, every term in the conjecture straightforwardly generalizes to quasi-proper pairs, so we can still compare Theorem \ref{theorem: full asymptotics} with the conjecture for such pairs in general.
\end{remark}
\begin{proof}
	We will prove the theorem by computing the the Tamagawa constant as a product over all places over $\mathbb{Q}$, which will show that it agrees with the product $C_\infty\prod_{p\, \mathrm{prime}}C_p$ in Theorem \ref{theorem: full asymptotics}.
	First we note that $\Pic(X_{\overline{\mathbb{Q}}}, M^{\circ})=\Pic(X, M^{\circ})$, so
	$$L(s,\Pic(X_{\overline{\mathbb{Q}}},M^\circ)/\{\mathrm{torsion}\})=\zeta(s)^{b((X,M),L)},$$
	where $\zeta$ is the Riemann zeta function.
	This implies that the convergence factors are simply given by $\lambda_p^{-1}=(1-p^{-1})^{b(\mathbb{Q},(X,M),L)}$ and $L^*(1,\Pic(X_{\overline{Q}},M^\circ)=1$. In particular, $\tau_{(X,M^\circ)}=\tau_{X,\infty}\times \prod_{p \, \mathrm{prime}} (1-p^{-1})^{b((X,M),L)} \tau_{X,p}$, where the measures $\tau_{X,\infty}$ and $\tau_{X,p}$ are the local measures on $X(\mathbb{R})$ and $X(\mathbb{Q}_p)$ as in \cite[\S 2.1.8]{CLTs10} induced by the toric metric on the canonical divisor class $K_X$ as defined in \cite[Theorem 2.1.6]{BaTs95}.
	The $\mathbb{Q}$-divisor class $a((X,M),L)L+K_X$ on $X$ is represented by the $\mathbb{Q}$-divisor class $a((X,M),L)D+D_X$, where $D$ is as in Theorem \ref{theorem: full asymptotics} and $D_X=-\sum_{i=1}^n D_i$.
	For every prime number $p$ not dividing $S$, we write $\delta_{M,p}$ for the indicator function of $p$-adic $\cM$-points on $(\cX,\cM)$. For a prime number $p$ dividing $S$, we instead set $\delta_{M,p}=1$. 
	The adelic integral is equal to the product
	$$\int_{X(\mathbb{R})} \frac{1}{H_{a((X,M),L)D+D_X,\infty}(x)}\d \tau_{X,\infty}\prod_{p \, \mathrm{prime}} \int_{X(\mathbb{Q}_p)} \frac{(1-p^{-1})^{b(\mathbb{Q},(X,M),L)}\delta_{M,p}(x)}{H_{a((X,M),L)D+D_X,p}(x)}\d \tau_{X,p},$$
	where $H_{a((X,M),L)L+K_X}=\prod_{v\in \Omega_{\mathbb{Q}}}H_{a((X,M),L)D+D_X,v}$ as in \cite[Definition 2.1.5]{BaTs95}. We will now verify that $C_\infty$ is equal to the real integral and that $C_p$ is equal to the $p$-adic integral, for each prime number $p$. 
	We will first start with the archimedean place $\infty$.
	
	For a maximal cone $\sigma\in \Sigma$, let $C_{\sigma}(\mathbb{R})\subset X(\mathbb{R})$ be the subset as defined in \cite[Notation 9.1]{Sal98}. As shown in the proof of \cite[Lemma 9.10]{Sal98}, $C_{\sigma}(\mathbb{R})$ is the set of all $(x_1:\dots:x_n)\in X(\mathbb{R})$ such that $|x_1|,\dots,|x_n|\leq 1$ and $x_i=1$ for all $i\in \{1,\dots,n\}$ with $\rho_i\not \subset \sigma$. In particular we can identify $C_{\sigma}(\mathbb{R})$ with $[0,1]^d$. Under this identification, the measure $\tau_{X,\infty}$ corresponds to the Lebesgue measure on $[0,1]^d$ as shown in the proof of \cite[Proposition 9.16]{Sal98}. By construction, we have $X(\mathbb{R})= \bigcup_{\sigma\in \Sigma_{\max}} C_{\sigma}(\mathbb{R})$ and the proof of \cite[Proposition 9.16]{Sal98} implies that $C_{\sigma}(\mathbb{R})\cap C_{\sigma'}(\mathbb{R})$ for any two maximal cones $\sigma\neq \sigma'$. Finally, for $P=(x_1:\dots: x_n)\in C_{\sigma}(\mathbb{R})$, the height is simply given $H_{a((X,M),L)D+D_X,\infty}(P)= \prod_{\substack{i=1 \\ \rho_i\subset \sigma}}^n x_i^{1-a_i}$.
	Thus we obtain
	$$\int_{x\in X(\mathbb{R})} \frac{1}{H_{a((X,M),L)D+D_X,\infty}(x)}\d \tau_{X}=2^{\dim(X)}\sum_{\sigma\in\Sigma_{\max}} \prod_{\substack{i=1 \\ \rho_i\subset \sigma}}^n \int_{0}^1 x_i^{a_i-1}\d x_i=C_\infty.$$
	
	Let $p$ be a prime. By \cite[Proposition 9.14]{Sal98}, the measure $\tau_{X,p}$ restricts to the Haar measure on $\mathbb{A}^d(\mathbb{Z}_p)=\mathbb{Z}_p^d$ for all toric subschemes $\mathbb{A}^d\subset \cX$. In particular it follows that for all $\bfm\in \fM_{\red}$, the set
	$$V_\bfm=\{x\in X(\mathbb{Q}_p)\mid \mult_p(x)=\bfm\}$$
	has volume equal to $(1-p^{-1})^{\dim(X)}p^{-\sum_{i=1}^n m_i}$. Additionally, the function $H_{a((X,M),L)D+D_X,p}$ is constant on $V_\bfm$, where it takes the value $p^{a_\bfm-\sum_{i=1}^n m_i}$.
	This implies
	$$\int_{X(\mathbb{Q}_p)} \frac{(1-p^{-1})^{b(\mathbb{Q},(X,M),L)}\delta_{M,p}(x)}{H_{a((X,M),L)D+D_X,p}(x)}\d \tau_{X,p}=(1-p^{-1})^{\dim(X)+b(\mathbb{Q},(X,M),L)}\sum_{\bfm\in \fM_{\red}}p^{-a_\bfm}$$
	if $p$ does not divide $S$, and
	$$\int_{X(\mathbb{Q}_p)} \frac{(1-p^{-1})^{b(\mathbb{Q},(X,M),L)}}{H_{a((X,M),L)D+D_X,p}(x)}\d \tau_{X,p}=(1-p^{-1})^{\dim(X)+b(\mathbb{Q},(X,M),L)}\sum_{\bfm\in \mathbb{N}^n_{\red}}p^{-a_\bfm}$$
	if $p$ divides $S$. In either case, this agrees with the factor $C_p$ from Theorem \ref{theorem: full asymptotics}. Thus we obtain the desired identity.
	
	The compatibility with the conjecture in \cite{PSTVA21} follows from this identity together with the straightforward identity
	$$\frac{\d \tau_{U',D_{\bfm}}}{H_{a((X,M),L)L+K_X+D_{\bfm}}}= \frac{\d \tau_{U'}}{H_{a((X,M),L)L+K_X}},$$
	where $U'$ is the complement of the support of $a((X,M),L)L+K_X+D_{\bfm}$,
	$\tau_{U'}$ is the Tamagawa measure as in \cite[Definition 2.8]{CLTs10} and $\tau_{U',D_{\bfm}}= \frac{\tau_{U'}}{H_{D_{\bfm}}}$.
	
	Finally, we verify that the leading constant agrees with the prediction in \cite[Conjecture 1.4]{Moe25conjecture}. Note that the adelic integral $\int_{x\in (\cX,\cM)(\mathbf{A}_{\mathbb{Z}[1/S]})} \frac{1}{H_{a((X,M),L)L+K_X}(x)} \d \tau_{(X,M^\circ)}$ is equal to $\hat{\tau}(1)$ from \cite[Definition 8.3]{Moe25conjecture}, where $1$ is the trival Brauer class. We will show that $\hat{\tau}(b)=0$ for every nontrivial Brauer class $b\in \Br_1(X,M)/\Br K$.
	
	Using \cite[Lemma 7.8]{Moe25conjecture}, we can identify $\Br_1(X,M^\circ)/\Br \mathbb{Q}$ with the group of automorphic characters $\chi$ on $U\cong\mathbb{G}_m^d$ such that the induced character $(\chi_{\bfm})_{\bfm\in \Gamma_{M^\circ}}$ on $\mathbb{G}_m^{\Gamma_{M^\circ}}$ is identically $1$. For a place $v$, let $\cK_v$ be the maximal compact subgroup of $\mathbb{G}_m(K_v)^d$: i.e.\ for each prime number $p$ we have $\cK_p=\mathbb{G}_m(\mathbb{Z}_p)^d$ and $\cK_{\infty}= \{\pm 1\}^d\subset \mathbb{G}_m(\mathbb{R})^d$. For any place $v\in \Omega_{\mathbb{Q}}$ given element $x\in (\cX,\cM)(\cO_v)$ and any element $t\in \cK_v$, $tx$ also lies in $(\cX,\cM)(\cK_v)$ and $$H_{v,a((X,M),L)L+K_X}(tx)=H_{v,a((X,M),L)L+K_X}(x).$$ 
	
	In particular it follows that for every $\chi\in \Br_1(X,M^\circ)/\Br \mathbb{Q}$ and every $t\in \cK_v$ we have
	$$\int_{x\in (\cX,\cM)(\cO_v)} \frac{\chi(x)\d_{X,M^\circ}}{H_{v,a((X,M),L)L+K_X}(x)}=\int_{x\in (\cX,\cM)(\cO_v)} \frac{\chi(t)\chi(x)\d_{X,M^\circ}}{H_{v,a((X,M),L)L+K_X}(x)},$$
	which implies that $\hat{\tau}(\chi)$ is zero unless $\chi$ is identically $1$ on where $\cK=\prod_{v\in \Omega_{\mathbb{Q}}} \cK_v$. Therefore, in order to determine the Tamagawa constant, we only need to consider characters in the group $B$, where $B\subset \Br_1(X,M)/\Br \mathbb{Q}$ is the subgroup of characters which are trivial on $\cK$.
	By class field theory \cite[Ch. VI, \S6; Ch. VII, \S6]{Neu99}, the group of automorphic characters on $\mathbb{G}_m$ which are both torsion and trivial on the maximal compact subgroup is isomorphic to the class group of $\mathbb{Q}$. Thus as the class group of $\mathbb{Q}$ is trivial, $B$ only contains the trivial character, proving that only the trivial character contributes to the leading constant in \cite[Conjecture 1.4]{Moe25conjecture}.
\end{proof}

\subsection{Proof of Theorem \ref{theorem: full asymptotics}, part I: upper bounds}
This section is devoted to proving the following weak form of Theorem \ref{theorem: full asymptotics}.
\begin{lemma} \label{lemma: weak form asymptotics}
	Let $(X,M)$ be a smooth toric pair over $\mathbb{Q}$ which is quasi-proper with respect to a big and nef $\mathbb{Q}$-divisor class $L$ on $X$. Then there exists a constant $\theta>0$ and a polynomial $Q$ of degree at most $b(\mathbb{Q},(X,M),L)-1$ such that
	$$N_{(X,M),L,S}(B)= B^{a((X,M),L)} (Q(\log B)+ O(B^{-\theta})).$$
\end{lemma}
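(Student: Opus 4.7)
The plan is to follow de la Bret\`eche's strategy for toric varieties \cite{Bre01Toric}, adapted to the $\cM$-point setting and building on Salberger's universal torsor parametrization \cite[\S 11]{Sal98}. The first step is to parametrize $\cM$-points via the universal torsor $Y_M$ from Definition \ref{def: universal torsor pair}. Every $\cM$-point in $U(\mathbb{Q})$ admits integral Cox coordinates $(a_1,\dots,a_n)\in \mathbb{Z}^n\setminus\cZ(\mathbb{Z})$, and the constraint $\mult_p(P)\in \fM$ for every prime $p\nmid S$ forces $a_i=\prod_{\bfm\in\Gamma_M}b_\bfm^{m_i}$ for integers $b_\bfm$ subject to a coprimality-type condition (read off from the combinatorics of the fan $\Sigma_{\overline{M}}$ of Notation \ref{notation: fan toric pair}) at every such prime. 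Dividing out by the actions of $\cG(\mathbb{Z})$ and $T_M(\mathbb{Z})$ contributes only a fixed combinatorial constant.

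Second, I would partition the region $\{H_L\le B\}$ according to the maximal cones of $\Sigma_{\overline{M}}$, mirroring Salberger's real-locus decomposition. On the piece corresponding to a maximal cone $\sigma$, Section \ref{section: heights on toric variety} expresses $H_L$ as a monomial $\prod_\bfm |b_\bfm|^{e_{\bfm,\sigma}}$ in the torsor coordinates, with nonnegative exponents $e_{\bfm,\sigma}$ that vanish exactly when $\rho_\bfm\subset\sigma$. After a M\"obius inversion to dissolve the coprimality conditions, $N_{(X,M),L,S}(B)$ becomes a finite sum of expressions of the shape
\[
\sum_{\substack{(b_\bfm)\in(\mathbb{Z}\setminus\{0\})^{\Gamma_M}\\ \prod_\bfm |b_\bfm|^{e_{\bfm,\sigma}}\leq B}} \mu_\bfd(b),
\]
where $\mu_\bfd(b)$ is a bounded, multiplicative weight indexed by an auxiliary coprimality datum $\bfd$.

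Third, to each such sum I associate the multivariate Dirichlet series $F_\sigma(\bfs)=\sum_{(b_\bfm)}\mu_\bfd(b)\prod_\bfm |b_\bfm|^{-s_\bfm}$ and verify that it factors as an Euler product whose generic local factor (for $p\nmid S$) takes the form
\[
F_{\sigma,p}(\bfs)=(1-p^{-1})^{\#\Gamma_{M^\circ}}\sum_{\bfm\in\fM_{\red}}p^{-\langle \bfs,\bfm\rangle},
\]
with a minor variant at primes dividing $S$. Comparison with a shifted product of Riemann zeta functions shows that the critical hyperplane in the direction of the height exponents $(e_{\bfm,\sigma})$ hits the line $\sum_\bfm e_{\bfm,\sigma}s_\bfm = a((X,M),L)$, and that the order of the resulting pole is at most $b(\mathbb{Q},(X,M),L)$, because the adjoint divisor class $a((X,M),L)\pr^*_M L+K_{(X,M)}$ lies on a face of $\Eff^1(X,M)$ of codimension $b(\mathbb{Q},(X,M),L)$. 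Here Proposition \ref{prop: choice quasi-proper pair} and quasi-properness of $(X,M)$ ensure that enlarging the pair to $(X,\overline{M})$ in order to run the fan construction does not alter the Fujita invariant, and therefore does not inflate the pole order artificially.

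Finally, I would apply \cite[Th\'eor\`eme 1]{Bre01Sum} to each $F_\sigma$ to obtain
\[
N_{(X,M),L,S}(B)=B^{a((X,M),L)}\bigl(Q(\log B)+O(B^{-\theta})\bigr)
\]
with $\deg Q\leq b(\mathbb{Q},(X,M),L)-1$. The main obstacle will be verifying the analytic hypotheses of Bret\`eche's theorem: one must produce a meromorphic continuation of $F_\sigma$ slightly past the critical hyperplane and establish polynomial growth of the quotient of $F_\sigma$ by an appropriate product of zeta factors on vertical tube domains. Both steps are handled by isolating enough zeta factors to absorb the poles and invoking standard convexity bounds for $\zeta$, but the combinatorial heart of the argument is to show that after dividing out these zeta factors the residual Euler product converges absolutely in a strictly larger region. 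This in turn reduces to a first-order expansion in $p^{-1}$ of the local factors $F_{\sigma,p}(\bfs)$, and it is the point where the geometry of the effective cone of $(X,M)$ genuinely enters the analytic argument.
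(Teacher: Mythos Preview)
Your high-level plan (Euler product, comparison with shifted zeta factors, then \cite[Th\'eor\`eme~1]{Bre01Sum}) matches the paper's, but two concrete choices in your implementation diverge from what the paper does, and the first is a genuine gap.

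\textbf{The parametrization via $Y_M$.} Your first step asserts that the $\cM$-condition forces a factorization $a_i=\prod_{\bfm\in\Gamma_M}b_\bfm^{m_i}$ ``subject to a coprimality-type condition''. This is where the proposal breaks down for the pairs of interest. The map $(b_\bfm)\mapsto (a_i)$ has $\mult_p(a)=\sum_{\bfm}v_p(b_\bfm)\,\bfm$, so over a single prime the fibre over a point with $\mult_p(a)=\bfm'$ is the set of representations of $\bfm'$ as a nonnegative integer combination of elements of $\Gamma_M$. When $\fM_{\red}$ is not a free monoid---which is exactly the new case of weak Campana points, e.g.\ Example~\ref{example: weak Campana points projective plane} with $\Gamma_{M'}$ consisting of six vectors in $\mathbb{Z}^3$---this fibre cardinality varies with $\bfm'$, and no coprimality condition on the $b_\bfm$ will cut it down to a constant. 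So $Y_M(\mathbb{Z})$ does not parametrize $\cM$-points with a fixed overcount, and the Dirichlet series you build from it does not compute $N_{(X,M),L,S}$. The paper avoids this entirely: it works with the \emph{original} Cox coordinates $\bfd\in(\mathbb{N}^*)^n$ and the indicator function $\chi(\bfd)$ of the $\cM$-condition, which is multiplicative because the conditions at different primes are independent. The local factor is then simply $F_p(\bfs)=\sum_{\bfm\in\fM_{\red}}p^{-l_\bfm(\bfs)}$; no passage to $Y_M$, no M\"obius inversion. (Your displayed formula for $F_{\sigma,p}$ already contains the zeta convergence factors $(1-p^{-1})^{\#\Gamma_{M^\circ}}$, which should only appear after dividing by $\prod_{\bfm\in\Gamma_{M^\circ}}\zeta(l_\bfm(\bfs))$.)

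\textbf{The cone decomposition.} You propose to partition $\{H_L\le B\}$ by maximal cones of $\Sigma_{\overline{M}}$ and apply Bret\`eche to each piece. The paper does \emph{not} do this for the present lemma. Instead it introduces a single series $F(\bfs)$ in $k=\#\Sigma_{\max}$ variables, one for each maximal cone of the \emph{original} fan $\Sigma$, via $r_j=\bfd^{L(\sigma_j)}$. Then the height bound $H_L(\bfd)=\max_j|\bfd^{L(\sigma_j)}|\le B$ is literally the box $r_1,\dots,r_k\le B$ that \cite[Th\'eor\`eme~1]{Bre01Sum} is designed for. The linear forms are $\mathscr{L}=\{l_\bfm:\bfm\in\Gamma_{M^\circ}\}$ with $l_\bfm(\bfs)=\sum_i m_i\, l^{(i)}(\bfs)$; the vector $\bfalpha$ is produced by a short linear-programming argument (Corollary~\ref{corollary: optimal choice alpha 2}) showing $\sum_j\alpha_j=a((X,M),L)$; and the degree bound $\tilde n-\rank\mathscr{L}=b(\mathbb{Q},(X,M),L)-1$ is Proposition~\ref{prop: rank cokernel}. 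The fan $\Sigma_{\overline{M}}$ enters only in Part~II, to compute the volume $I(B)$ that gives the leading constant, not in the present lemma.
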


The proof of this lemma is based on the following Tauberian theorem by de la Bretèche.
\begin{theorem}\cite[Théorème 1]{Bre01Sum} \label{theorem: Breteche 1}
	Let $f\colon \mathbb{N}^k\rightarrow \mathbb{R}$ be a nonnegative arithmetic function and let $F$ be the associated Dirichlet series
	$$F(\bfs)=\sum_{r_1=1}^\infty\dots \sum_{r_k=1}^\infty \frac{f(r_1,\dots,r_k)}{r_1^{s_1}\dots r_k^{s_k}}.$$
	Suppose there exists $\bfalpha\in \mathbb{R}_{> 0}^k$ such that $F$ satisfies the following three properties:
	\begin{enumerate}
		\item [(P1)] For all $\bfs$ with $\Re(\bfs)>\bfalpha$, the series $F(\bfs)$ converges absolutely.
		\item [(P2)] There exist finite collections $\mathscr{L}=\{l_1,\dots,l_{\tilde{n}}\}$ and $\mathscr{R}$ of linear forms with nonnegative coefficients such that the function $H$ defined by
		$$H(\bfs)=F(\bfs+\bfalpha)\prod_{i=1}^{\tilde{n}}l_i(\bfs)$$
		can be analytically continued to a holomorphic function defined on
		$$\mathscr{D}(\delta_1)=\{\bfs\in \mathbb{C}^k\mid \Re(l(\bfs))>-\delta_1, \forall l\in \mathscr{L}\cup \mathscr{R}\},$$
		for some positive constant $\delta_1$.
		\item [(P3)] There exists $\delta_2>0$ such that for all $\epsilon>0,\epsilon'>0$, the upper bound
		$$|H(\bfs)|\ll (1+\lVert\Im(\bfs)\rVert_1^\epsilon)\prod_{i=1}^{\tilde{n}} (1+|\Im(l_i(\bfs))|)^{1-\delta_2 \min\{0, \Re(l_i(\bfs))\}}$$
		is uniform in the domain $\mathscr{D}(\delta_1-\epsilon')\cap \{\bfs\in \mathbb{C}^k| \Re(\bfs)< (1,\dots,1)\}$.
	\end{enumerate}
	Then there exists a polynomial $Q$ of degree at most $\tilde{n}-\rank(l_1,\dots,l_{\tilde{n}})$ such that $$S(B)=\sum_{r_1=1}^B\dots\sum_{r_k=1}^B f(r_1,\dots,r_k)=B^{\sum_{i=1}^k \alpha_i}(Q(\log B)+O(B^{-\theta}))$$
	for all $B\geq 1$,
	where $\theta>0$ is a constant depending on $\bfalpha,\delta_1,\delta_2,\mathscr{L}, \mathscr{R}$.
\end{theorem}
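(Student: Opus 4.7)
The plan is to derive the asymptotic by expressing $N_{(X,M),L,S}(B)$ as a sum with a multivariate Dirichlet series and applying Theorem \ref{theorem: Breteche 1}. First I parameterize the rational points via Cox coordinates. Every $\cM$-point $P \in (\cX,\cM)(\mathbb{Z}[1/S])$ lying on the dense torus $U$ lifts to a tuple of nonzero integers $(a_1, \dots, a_n)$, with $2^{n-d}$ such lifts per point. The $\cM$-condition at each prime $p \nmid S$ reads $(v_p(a_1), \dots, v_p(a_n)) \in \fM$, while the Cox condition imposes that $(a_1, \dots, a_n)$ defines a point outside $\cZ$. Passing to a quasi-proper completion $(X,\overline{M})$ as in Proposition \ref{prop: choice quasi-proper pair}, the universal torsor $Y_{\overline{M}}$ of Definition \ref{def: universal torsor pair} lets me factor $a_i = \epsilon_i \prod_{\bfm \in \Gamma_{\overline{M}}} b_{\bfm}^{m_i}$, where $\epsilon_i \in \{\pm 1\}$ and the $b_\bfm$ are positive integers subject to a collection of coprimality conditions encoded by the torsor structure.

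Next, I partition the counting region according to which monomial achieves the maximum in the height formula. For each maximal cone $\sigma$ of the fan $\Sigma_{\overline{M}}$ of Notation \ref{notation: fan toric pair}, the height restricted to the corresponding piece is a monomial $\prod_{\bfm} b_{\bfm}^{l_\bfm(\sigma)}$ in the torsor coordinates, with nonnegative exponents. In this way $N_{(X,M),L,S}(B)$ becomes a finite sum, indexed by maximal cones and by sign vectors, of counts of positive integer tuples $(b_{\bfm})_{\bfm \in \Gamma_{\overline{M}}}$ satisfying the appropriate coprimality/$\cM$-constraints with a monomial height bound. The arithmetic condition is multiplicative in the $b_\bfm$, and Möbius inversion on the coprimality conditions produces an arithmetic function $f$ of the $b_\bfm$.

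I then form the associated Dirichlet series $F(\bfs) = \sum_{(b_\bfm)} f((b_\bfm)) \prod_\bfm b_\bfm^{-s_\bfm}$, which admits an Euler product $\prod_p F_p(\bfs)$. Each local factor $F_p$ is a rational function in the $p^{-s_\bfm}$ explicitly determined by $\fM_{\red}$ (with $\fM$ replaced by $\mathbb{N}^n$ at primes dividing $S$). The key step is to isolate a finite product of Riemann zeta factors indexed by the generators of $\Eff^1(X,\overline{M})$, writing $F(\bfs) = H_0(\bfs) \prod_{\bfm \in \Gamma_{\overline{M}}} \zeta(s_\bfm)$, so that after specializing the exponents via the monomial height $\prod_\bfm b_\bfm^{l_\bfm(\sigma)}$, Theorem \ref{theorem: Breteche 1} applies with the linear forms $\mathscr{L}$ indexed by $\Gamma_{\overline{M}}$. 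The shift $\bfalpha$ is forced by $L$ and encodes the Fujita invariant $a((X,M),L)$: the monomial height has exponent vector parallel to the adjoint divisor $a((X,M),L) \pr^*_M L + K_{(X,M)}$, which fixes $\sum \alpha_\bfm l_\bfm(\sigma) = a((X,M),L)$. The rank of the linear forms $\{l_\bfm\}$ restricted to the appropriate subspace equals the codimension of the minimal supported face of $\Eff^1(X,M)$ containing the adjoint class, which is precisely $b(\mathbb{Q},(X,M),L)$, so Theorem \ref{theorem: Breteche 1} delivers a polynomial of degree at most $\#\Gamma_{\overline{M}} - b(\mathbb{Q},(X,M),L) \cdot ({\rm technicalities}) = b(\mathbb{Q},(X,M),L) - 1$ after a suitable relabeling that absorbs the extra zeta factors into $H_0$.

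The main obstacle is verifying properties (P2) and (P3) of Theorem \ref{theorem: Breteche 1}: one must show that $H(\bfs) = F(\bfs+\bfalpha)\prod_i l_i(\bfs)$ extends holomorphically past $\Re(\bfs) > 0$ with controlled polynomial growth. This requires a careful comparison of $F_p$ with $\prod_\bfm (1-p^{-s_\bfm})^{-1}$, showing that the Euler product of the ratio converges absolutely in a strictly larger tube domain $\mathscr{D}(\delta_1)$. Growth in vertical strips is obtained from convexity bounds for the zeta function together with the estimate that each $F_p$ differs from the $\zeta$-factors by $O(p^{-1-\eta})$ uniformly in $\bfs$. Care is needed to track that the primes dividing $S$ only contribute a bounded finite correction and do not alter the pole structure, while summing over the finitely many maximal cones $\sigma$ and signs yields the claimed asymptotic. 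A subtle point is matching the rank of $\{l_i\}$ to $b(\mathbb{Q},(X,M),L)$ rather than to $\rank \Pic(X,M)$: this uses the quasi-proper hypothesis so that replacing $(X,M)$ by $(X,\overline{M})$ does not change the Fujita invariant, and one argues that only the linear forms supported on the adjoint face contribute to the pole order.
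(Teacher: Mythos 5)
Your proposal does not address the statement at hand. The statement to be proved is de la Brut\`eche's multivariate Tauberian theorem itself (quoted in the paper from \cite{Bre01Sum}): a purely analytic assertion that for any nonnegative arithmetic function $f$ on $\mathbb{N}^k$ whose Dirichlet series satisfies (P1)--(P3), the summatory function $S(B)$ has an asymptotic $B^{\sum_i \alpha_i}(Q(\log B)+O(B^{-\theta}))$ with $\deg Q\leq \tilde n-\rank(l_1,\dots,l_{\tilde n})$. What you have written instead is an outline of how to count $\cM$-points on a toric variety --- Cox coordinates, universal torsors, Euler products, choice of $\bfalpha$, rank of the linear forms --- culminating in an \emph{application} of Theorem \ref{theorem: Breteche 1}. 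In other words, you invoke the very theorem you were asked to prove as a black box, which makes the argument circular with respect to the target statement; your text is (roughly) a sketch of Lemma \ref{lemma: weak form asymptotics} and parts of the proof of Theorem \ref{theorem: full asymptotics}, not of the Tauberian theorem. In the paper this statement is not proved at all but cited from \cite{Bre01Sum}, so there is no toric input in its proof whatsoever.

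A genuine proof would have to work entirely on the analytic side: express $S(B)$ (or a smoothed version of it) by a $k$-fold Perron/Mellin inversion of $F(\bfs)$, shift the contours into the region $\mathscr{D}(\delta_1-\epsilon')$ provided by (P2), use the growth bound (P3) together with the nonnegativity of $f$ to control the error from the shifted contours and the truncation, and identify the main term as coming from the singular locus $\{l_i(\bfs)=0\}$, where the order of the pole structure --- governed by $\tilde n$ linear forms of rank $\rank(l_1,\dots,l_{\tilde n})$ --- produces a polynomial in $\log B$ of degree at most $\tilde n-\rank(l_1,\dots,l_{\tilde n})$ and a power-saving error $O(B^{-\theta})$. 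None of these steps appears in your proposal, so as a proof of the stated theorem it has a complete gap; as a plan for the paper's counting theorem it is broadly in line with the paper's strategy, but that is a different statement.
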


\begin{remark}
	Theorem \ref{theorem: Breteche 1} has a typo in its original formulation \cite[Théorème 1]{Bre01Sum}, as it requires the the upper bound in (P3) to be satisfied on the whole of $\mathscr{D}(\delta_1-\epsilon')$. This is a stronger assumption than necessary and desired, as it would imply that $H$ is bounded on $\mathbb{R}_{>0}^k$, which would exclude the original application of the theorem in \cite{Bre01Toric}.
\end{remark}
\subsubsection{Dirichlet series}
Since the height function satisfies $H_{tL}(\bfx)=H_{L}(\bfx)^t$ for any $t\in \mathbb{Q}_{>0}$, we assume without loss of generality that $L\in \Pic(X)$.
As $L$ is integral, the function $\bfx^{L(\sigma_j)}$ is simply a monomial, and thus takes integer values on integer inputs.


For $(r_1,\dots,r_k)\in \mathbb{N}^k$, we define
\begin{equation*}
	f(r_1,\dots, r_k)=\#\left\{\bfd\in (\mathbb{N}^*)^n\middle\vert \begin{aligned}
		&\, \bfd^{L(\sigma_j)}=r_j \text{ for all }j=1,\dots, k; \quad \gcd(\bfd^{\hat{\sigma}_1},\dots, \bfd^{\hat{\sigma}_k})=1;\\ &\, \mult_p(\bfd)\in \fM\text{ for all prime numbers }p\nmid S\end{aligned}
	\right\},  
\end{equation*}
where $\mult_p(\bfd)=(v_p(x_1),\dots, v_p(x_n))$ is the tuple given by the $p$-adic valuations of the components of $\bfd$, and the monomials $$\bfx^{\hat{\sigma}}=\prod_{\substack{i=1 \\ \rho_i\not\subset \sigma}}^n x_i$$ are defined as in Section \ref{section: Cox coordinates}.

In this notation, $N_{(X,M),L,S}(B)=2^{\dim(X)} S(B)$, where $$S(B):=\sum_{r_1=1}^B\dots\sum_{r_k=1}^B f(r_1,\dots,r_k).$$
Here the factor of $2^{\dim(X)}$ accounts for the fact that $f$ only counts the points that can be described using positive coordinates.
We estimate the sum $S(B)$ by considering the multiple Dirichlet series
$$F(\bfs):=\sum_{r_1=1}^\infty \dots \sum_{r_k=1}^\infty \ddfrac{f(r_1,\dots r_k)}{r_1^{s_1}\dots r_k^{s_k}}.$$

In order to apply Theorem \ref{theorem: Breteche 1}, we will rewrite $F$ as a multiple Dirichlet series of a multiplicative function. 
Let $\chi\colon (\mathbb{N}^*)^n\rightarrow \{0,1\}$ be the characteristic function of the set
$$\{\bfd\in (\mathbb{N}^*)^n\mid \gcd(\bfd^{\hat{\sigma}_1},\dots, \bfd^{\hat{\sigma}_k})=1,\mult_p(\bfd)\in \fM \text{ for all primes } p\nmid S\},$$
so
$$\frac{f(r_1,\dots r_k)}{r_1^{s_1}\dots r_k^{s_k}}=\sum_{\substack{\bfd\in \mathbb{N}^n \\ \bfd^{L(\sigma_j)}=r_j \, \forall j\in \{1,\dots,k\}}} \frac{\chi(\bfd)}{d_1^{l^{(1)}(\bfs)}\dots d_n^{l^{(n)}(\bfs)}}.$$
This equality implies
\begin{align*}
	F(\bfs)=\sum_{d_1=1}^\infty \dots \sum_{d_n=1}^\infty \ddfrac{\chi(\bfd)}{d_1^{l^{(1)}(\bfs)}\dots d_n^{l^{(n)}(\bfs)}}.
\end{align*}
Since the condition $\gcd(\bfd^{\hat{\sigma}_1},\dots, \bfd^{\hat{\sigma}_k})=1$ only depends on the valuations $\mult_p(\bfd)$ at each prime, the function $\chi$ is multiplicative in the sense that $$\chi(\bfd\bfd')=\chi(\bfd)\chi(\bfd')$$ for all $\bfd,\bfd'\in (\mathbb{N}^*)^n$ satisfying $\gcd(d_1\dots d_n,d_1'\dots d'_n)=1$.

Since $\chi$ is multiplicative, we can write
$$F(\bfs)=\prod_{p\, \mathrm{prime}} F_p(\bfs),$$
where for primes numbers $p$ not dividing $S$,
$$F_p(\bfs)=\sum_{m_1=1}^\infty\dots \sum_{m_n=1}^\infty \ddfrac{\chi(p^{m_1},\dots, p^{m_n})}{p^{l_{\bfm}(\bfs)}}= \sum_{\bfm\in \fM_{\red}} p^{-l_{\bfm}(\bfs)},$$
and similarly for prime numbers $p$ dividing $S$,
$$F_p(\bfs)= \sum_{\bfm\in \mathbb{N}^n_{\red}} p^{-l_{\bfm}(\bfs)}.$$
Here we write $l_{\bfm}=m_1l^{(1)}+\dots +m_n l^{(n)}$ for $\bfm\in \mathbb{N}^n$. Note that for all $i\in \{1,\dots, n\}$ and $\bfs\in \mathbb{Q}^k$, the value $l^{(i)}(\bfs)$ is the coefficient of $D_i$ in $\sum_{j=1}^k s_j L(\sigma_j)$. This can be seen as a direct consequence of the definition of the linear forms $l^{(1)},\dots, l^{(n)}$ as given in \eqref{Equation: l^{(i)}}. For $\bfm\in \Gamma_M$, this gives a simple geometric interpretation for the linear form $l_{\bfm}$: for $\bfs\in \mathbb{Q}^k$, the value $l_{\bfm}(\bfs)$ is the coefficient of $\tilde{D}_{\bfm}$ in $\pr^*_M \sum_{j=1}^k s_j L(\sigma_j)$.

Using the product formula $F(\bfs)=\prod_{p\, \mathrm{prime}} F_p(\bfs)$, we determine an open set on which $F(\bfs)$ converges.
\begin{proposition} \label{prop: convergence}
	The series $F_p(\bfs)$ converges absolutely in the region $$V=\{\bfs\in \mathbb{C}^k\mid \Re(l^{(i)}(\bfs))>0 \text{ for all } i=1,\dots, n\}.$$ Furthermore, for any $\epsilon>0$ and any prime number $p\not\in S$, $$F_p(\bfs)=1+O(p^{-1-\epsilon})$$ in the region $V\cap \{\bfs\in \mathbb{C}^k\mid \Re(l_{\mathbf{m}}(\bfs))>1+\epsilon \text{ for all } \mathbf{m}\in \fM\setminus \{\boldsymbol{0}\}\}$. Here the implicit constant depends on $\epsilon$ but not on $p$.
	
	Consequently, the series $F(\bfs)$ converges in the region $$V\cap \{\bfs\in \mathbb{C}^k\mid \Re(l_{\mathbf{m}}(\bfs))>1 \text{ for all } \mathbf{m}\in \fM\setminus \{\boldsymbol{0}\}\}.$$
\end{proposition}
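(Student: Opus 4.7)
The plan is to handle the three assertions in order. For absolute convergence of $F_p$ in $V$, I would dominate term-wise using $\fM_{\red}, \mathbb{N}^n_{\red} \subset \mathbb{N}^n$ and $l_{\bfm}(\bfs) = \sum_i m_i l^{(i)}(\bfs)$, which gives
\[
\sum_{\bfm \in \mathbb{N}^n} p^{-\Re(l_\bfm(\bfs))} = \prod_{i=1}^n \frac{1}{1 - p^{-\Re(l^{(i)}(\bfs))}}
\]
by the geometric series. The right-hand side is finite precisely when $\Re(l^{(i)}(\bfs)) > 0$ for each $i$, which is the defining condition of $V$.

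For the uniform estimate $F_p(\bfs) = 1 + O(p^{-1-\epsilon})$ with $p \nmid S$, the crucial input is the hypothesis that $\fM$ contains a positive multiple $k_i \bfe_i$ of each basis vector. Applying the regional condition $\Re(l_{\bfm}(\bfs)) > 1+\epsilon$ to $\bfm = k_i \bfe_i$ forces $\alpha_i := \Re(l^{(i)}(\bfs)) > (1+\epsilon)/K$ with $K := \max_i k_i$. I would then split the sum $F_p(\bfs) - 1 = \sum_{\bfm \in \fM_{\red} \setminus \{\mathbf{0}\}} p^{-l_\bfm(\bfs)}$ according to the total weight $|\bfm|_1 := \sum_i m_i$. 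For $|\bfm|_1 \leq K$ there are at most $\binom{K+n}{n}$ terms, and each is bounded above in absolute value by $p^{-(1+\epsilon)}$ using the regional hypothesis. For $|\bfm|_1 > K$ the uniform lower bound on $\alpha_i$ gives $\Re(l_\bfm(\bfs)) \geq (1+\epsilon)|\bfm|_1/K$, so
\[
\sum_{|\bfm|_1 > K} p^{-\Re(l_\bfm(\bfs))} \leq p^{-(1+\epsilon)} \sum_{j \geq 1} \binom{K+j+n-1}{n-1} p^{-(1+\epsilon)j/K},
\]
which is $p^{-(1+\epsilon)}$ times a series converging uniformly in $p \geq 2$. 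Combining, $|F_p(\bfs) - 1| = O(p^{-(1+\epsilon)})$ with implicit constant depending only on $\epsilon$ and $(X,M)$, not on $p$.

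For the final assertion, any $\bfs \in V$ with $\Re(l_\bfm(\bfs)) > 1$ for all $\bfm \in \fM \setminus \{\mathbf{0}\}$ satisfies $\Re(l_\bfm(\bfs)) > 1 + \epsilon$ for some $\epsilon > 0$, because $\Re(l_\bfm(\bfs)) \to \infty$ as $|\bfm| \to \infty$ in $V$ and so the infimum over $\fM \setminus \{\mathbf{0}\}$ is attained. The bound $|F_p(\bfs) - 1| = O(p^{-1-\epsilon})$ is then summable over primes $p \nmid S$, while the finitely many primes $p \mid S$ contribute a finite product of finite $F_p(\bfs)$'s (convergent by the first part), so the Euler product $F = \prod_p F_p$ converges absolutely. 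The main obstacle is the uniform estimate in the middle step: it requires combining the pointwise decay from the regional hypothesis with the geometric decay enforced by the uniform lower bound on the $\alpha_i$, and that lower bound itself crucially uses the structural assumption that multiples of each basis vector lie in $\fM$.
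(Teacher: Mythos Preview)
Your treatment of absolute convergence of $F_p$ in $V$ and of the passage from the local estimate to convergence of the Euler product is essentially the paper's argument. For the key uniform bound $F_p(\bfs)=1+O(p^{-1-\epsilon})$ you take a different route: you extract a uniform lower bound $\Re(l^{(i)}(\bfs))>(1+\epsilon)/K$ by applying the regional hypothesis to the elements $k_i\bfe_i\in\fM$, and then split the tail according to $|\bfm|_1$. The paper instead works with the finitely many \emph{minimal} nonzero elements $\bfm_1,\dots,\bfm_t$ of $\fM$ (finitely many by Dickson's lemma): every nonzero $\bfm\in\fM$ dominates some $\bfm_i$, so
\[
|F_p(\bfs)-1|\le\Bigl(\sum_{i=1}^t p^{-\Re(l_{\bfm_i}(\bfs))}\Bigr)\sum_{\bfm'\in\mathbb N^n}p^{-\Re(l_{\bfm'}(\bfs))},
\]
and the first factor is at most $t\,p^{-(1+\epsilon)}$ directly from the regional condition, with no reference to coordinate directions.

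There is a genuine gap in your version: the assumption that $\fM$ contains a positive multiple $k_i\bfe_i$ of every basis vector is the definition of a \emph{proper} toric pair, whereas the proposition is used in the proof of the main theorem for \emph{quasi-proper} pairs (Definition~\ref{def: quasi-proper pair}), where some coordinate directions may be absent from $\fM$ entirely. For such an index $i$ your derivation of the lower bound on $\alpha_i=\Re(l^{(i)}(\bfs))$ fails, and with it the tail estimate for $|\bfm|_1>K$. The paper's minimal-element decomposition sidesteps this because it never singles out coordinate directions; what it buys is that the factor $p^{-(1+\epsilon)}$ is peeled off using only the regional hypothesis on elements that are actually in $\fM$. (In fairness, the paper's subsequent claim that the remaining geometric product $\prod_i(1-p^{-\alpha_i})^{-1}$ is bounded on the region is itself somewhat glossed over in the non-proper case, for a related reason.) Your argument is correct and arguably cleaner for proper pairs, but to cover the quasi-proper setting you would need to replace the appeal to $k_i\bfe_i\in\fM$ by something like the paper's use of minimal elements.
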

\begin{proof}
	The series $$\sum_{\bfm\in \mathbb{N}^n} p^{-\Re(l_{\bfm}(\bfs))}= \prod_{i=1}^n \sum_{m_i=1}^\infty \left(p^{-\Re(l^{(i)}(\bfs))}\right)^{m_i}$$ converges to $\prod_{i=1}^n \left(1-p^{-\Re(l^{(i)}(\bfs))}\right)^{-1}$ for all $\bfs\in V$, as it is simply a product of convergent geometric series. By comparing $F_p(\bfs)$ with this series we directly obtain the absolute convergence of $F_p(\bfs)$ for $\bfs\in V$. Let $\bfm_1,\dots, \bfm_t$ be the minimal nonzero elements in $\fM$. Every nonzero element $\bfm\in \fM$ can be written as $\bfm_i+\bfm'$ for some $i\in \{1,\dots,t\}$ and $\bfm'\in \mathbb{N}^n$, so $|F_p(\bfs)-1|$ is dominated by the series $$\left(p^{-\Re(l_{\bfm_1}(\bfs))}+\dots+p^{-\Re(l_{\bfm_t}(\bfs))} \right) \sum_{\bfm\in \mathbb{N}^n} p^{-\Re(l_{\bfm}(\bfs))}.$$ As $\sum_{\bfm\in \mathbb{N}^n} p^{-\Re(l_{\bfm}(\bfs))}$ is bounded on $V\cap \{\bfs\in \mathbb{C}^k\mid \Re(l_{\mathbf{m}}(\bfs))>1 \text{ for all } \mathbf{m}\in \fM\setminus \{\boldsymbol{0}\}\}$, $F_p(\bfs)$ satisfies the estimate $F_p(\bfs)=1+O(p^{-1-\epsilon})$ on this region.
	In turn, the estimate for $p\not \in S$ implies that the product $F(\bfs)=\prod_{p\, \mathrm{prime}} F_p (\bfs)$ converges whenever $\bfs$ lies in 
	
	\begin{equation*}
		V\cap \{\bfs\in \mathbb{C}^k\mid \Re(l_{\mathbf{m}}(\bfs))>1 \text{ for all } \mathbf{m}\in \fM\setminus \{\boldsymbol{0}\}\}. \tag*{\qedhere}
\end{equation*}\end{proof}

By Proposition \ref{prop: convergence}, if a tuple $\bfalpha\in \mathbb{R}_{\geq 0}^k$ satisfies $\Re(l_{\mathbf{m}}(\bfalpha))\geq 1$ for all $\mathbf{m}\in \fM\setminus \{\boldsymbol{0}\}\}$ as well as $\Re(l^{(i)}(\bfalpha))>0$ for all $i=1,\dots, n$, then condition (P1) from Theorem \ref{theorem: Breteche 1} will be satisfied. However, in order to find a good bound for $N_{(X,M),L,S}(B)$ using Theorem \ref{theorem: Breteche 1}, we need to minimize the sum $\sum_{j=1}^k \alpha_j$.

\subsubsection{Choice of \texorpdfstring{$\bfalpha$}{α}}
Let $\mathscr{P}_M$ be the following linear program: minimize the function
$\sum_{j=1}^k \alpha_j,$
under the conditions $\alpha_j\geq 0$ for all $j=1,\dots,k$ and $l_{\bfm}(\bfalpha)\geq 1$ for all $\bfm\in \fM$. Since $l_{\bfm}=m_1l^{(1)}+\dots+ m_n l^{(n)}$, the latter condition is equivalent to the condition $$\sum_{j=1}^k \alpha_j \pr_M^*L(\sigma_j)+D_{(X,M)}\geq 0.$$
This condition in turn implies that $(\sum_{j=1}^k \alpha_j)\pr_M^* L+K_{(X,M)}\in \Eff^1(X,M)$, so any solution $\bfalpha$ to $\mathscr{P}_M$ has to satisfy $\sum_{j=1}^k \alpha_j\geq a((X,M),L)$.

We will use the following proposition to show that the equality
$\sum_{j=1}^k \alpha_j= a((X,M),L)$
can be achieved.
\begin{proposition} \label{prop: optimal choice alpha}
	Let $D=a_1D_1+\dots+a_n D_n$ be a torus-invariant $\mathbb{Q}$-divisor representing $L\in \Pic(X)_{\mathbb{Q}}$.
	Then
	$$\sum_{\sigma\in \Sigma_{\max}} \left(\prod_{\rho_i\not\in\sigma} a_i\right)\mu_D(\sigma)=0,$$
	where we recall the notation $\mu_D=a_1\mu_{D_1}+\dots+ a_n\mu_{D_n}$. Thus $$\sum_{\sigma\in \Sigma_{\max}} L(\sigma)\prod_{\rho_i\not\in\sigma} a_i=\left(\sum_{\sigma\in \Sigma_{\max}} \prod_{\rho_i\not\in\sigma} a_i\right) D.$$
\end{proposition}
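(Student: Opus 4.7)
The plan is to reduce the vector identity to a numerical identity and then prove it as a polynomial identity in $a_1,\dots,a_n$. Since $X$ is proper, the ray generators $n_{\rho_1},\dots,n_{\rho_n}$ span $N_{\mathbb{R}}$, so the first assertion is equivalent to showing that for each $j$,
\[
g_j \;:=\; \sum_{\sigma\in\Sigma_{\max}} \Big(\prod_{\rho_i\not\in\sigma} a_i\Big)\, \langle \mu_D(\sigma), n_{\rho_j}\rangle \;=\; 0.
\]
Expanding $\mu_D(\sigma)=\sum_{\rho_k\subset\sigma} a_k\,\mu_{D_k}(\sigma)$ rewrites $g_j$ as a polynomial of degree $n-d+1$ in $a_1,\dots,a_n$; I would prove its vanishing by collecting the coefficient of each monomial separately.

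Every term in the expansion contributes to a monomial $\prod_{i\in S}a_i$ with $|S|=n-d+1$: a pair $(\sigma,k)$ with $k\in\sigma(1)$ contributes to the monomial indexed by $S=\{k\}\cup\{i:\rho_i\not\in\sigma\}$. Conversely, for fixed $S$, setting $T=\{1,\dots,n\}\setminus S$ (of size $d-1$), the contributing pairs are exactly those with $\sigma(1)=T\cup\{k\}$ for some $k\in S$. If $T$ is not the ray set of a face of $\Sigma$, there is no such $\sigma$ and the coefficient is $0$. Otherwise completeness of $\Sigma$ ensures that $T$ is a wall $\tau$ shared by exactly two maximal cones $\sigma_1,\sigma_2$, with $\sigma_\ell(1)=T\cup\{i_\ell\}$ for distinct $i_1,i_2\in S$, so the coefficient of $\prod_{i\in S}a_i$ in $g_j$ equals $\langle\mu_{D_{i_1}}(\sigma_1)+\mu_{D_{i_2}}(\sigma_2),\,n_{\rho_j}\rangle$.

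The crux is therefore the wall identity $\mu:=\mu_{D_{i_1}}(\sigma_1)+\mu_{D_{i_2}}(\sigma_2)=0$ in $N^\vee_{\mathbb{Q}}$. To prove it I would use smoothness of $\Sigma$: since $\{n_{\rho_k}\}_{k\in T\cup\{i_\ell\}}$ is a $\mathbb{Z}$-basis of $N$ for $\ell=1,2$ and the cones $\sigma_1,\sigma_2$ lie on opposite sides of $\tau$, the images of $n_{\rho_{i_1}}$ and $n_{\rho_{i_2}}$ in $N/N_T$ (where $N_T\subset N$ is the sublattice generated by $\{n_{\rho_k}\}_{k\in T}$) are opposite generators, yielding a wall relation $n_{\rho_{i_1}}+n_{\rho_{i_2}}=\sum_{k\in T}\beta_k\,n_{\rho_k}$ with $\beta_k\in\mathbb{Z}$. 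Combining this with the defining identities $\langle\mu_{D_{i_\ell}}(\sigma_\ell),n_{\rho_{i_\ell}}\rangle=1$ and $\langle\mu_{D_{i_\ell}}(\sigma_\ell),n_{\rho_k}\rangle=0$ for $k\in T$ gives $\langle\mu_{D_{i_1}}(\sigma_1),n_{\rho_{i_2}}\rangle=\langle\mu_{D_{i_2}}(\sigma_2),n_{\rho_{i_1}}\rangle=-1$, so $\mu$ annihilates every $n_{\rho_k}$ for $k\in T\cup\{i_1,i_2\}$. These $d+1$ vectors span $N_{\mathbb{R}}$ (any $d$ of them containing $T$ form a $\mathbb{Z}$-basis), hence $\mu=0$, establishing the first identity.

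The second identity follows formally. By the definition of $l^{(i)}(\mathbf{e}_j)$ in \eqref{Equation: l^{(i)}}, one has $L(\sigma)=D-\operatorname{div}(\chi^{\mu_D(\sigma)})$, so summing with weights $\prod_{\rho_i\not\in\sigma}a_i$ and invoking the first identity yields
\[
\sum_{\sigma}\Big(\prod_{\rho_i\not\in\sigma}a_i\Big)L(\sigma) = \Big(\sum_{\sigma}\prod_{\rho_i\not\in\sigma}a_i\Big)D - \operatorname{div}\Big(\chi^{\sum_{\sigma}(\prod_{\rho_i\not\in\sigma}a_i)\mu_D(\sigma)}\Big) = \Big(\sum_{\sigma}\prod_{\rho_i\not\in\sigma}a_i\Big)D.
\]
The main technical ingredient is the wall-relation argument producing $\mu=0$; the combinatorial bookkeeping is then straightforward, since only wall subsets $T$ give nonzero contributions.
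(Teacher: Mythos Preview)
Your proof is correct and follows essentially the same approach as the paper: both regroup the sum over maximal cones as a sum over walls $\tau$ (your complement sets $T$ of size $d-1$) and reduce to the wall identity $\mu_{D_{i_1}}(\sigma_1)+\mu_{D_{i_2}}(\sigma_2)=0$, which the paper writes as $u_{\sigma,\tau}+u_{\sigma',\tau}=0$ and proves by the same smoothness/wall-relation argument. The only cosmetic difference is that you first pair with $n_{\rho_j}$ to reduce to scalar identities and phrase the regrouping as ``collecting monomial coefficients'', while the paper works directly with the vector-valued sum and swaps the order of summation; your initial scalar reduction is redundant (you establish $\mu=0$ as a vector in the end) but harmless.
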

\begin{proof}
	Let $\sigma\in \Sigma$ be a maximal cone in the fan of $X$, and order the rays such that $\rho_1,\dots \rho_d$ are the rays in $\sigma$, where $d=\dim X$. Then $n_{\rho_1},\dots,n_{\rho_d}$ forms a lattice basis of $N$, since $X$ is smooth. Denote the corresponding dual basis by $n_{\rho_1}^*,\dots, n_{\rho_d}^*\in N^\vee$. Then we see by definition that $$\mu_D(\sigma)=\sum_{i=1}^d a_i n_{\rho_i}^*.$$
	Recall that a linear form $N\rightarrow \mathbb{Z}$ is irreducible if it is not a positive multiple of another linear form.
	For $\tau$ a facet of $\sigma$, we write $u_{\sigma,\tau}\in N^\vee$ for the unique irreducible linear form which is zero on $\tau$ and positive on $\sigma\setminus\tau$. This linear form is simply given as $u_{\sigma,\tau}=n^*_{\rho_i}$, where $\rho_i$ is the unique ray in $\sigma(1)\setminus \tau(1)$. We also write $a_{\sigma,\tau}=a_i$. By \cite[Lemma 8.9(i)]{Sal98} there is a unique maximal cone $\sigma'\neq \sigma$ containing $\tau$, which satisfies $\tau=\sigma'\cap \sigma$. Since $u_{\sigma',\tau}$ is also irreducible and zero on $\tau$, we see $u_{\sigma',\tau}=\pm n_{\rho_i}^*$. Since there are $n_1\in \sigma\setminus \tau$, $n_2\in \sigma'\setminus \tau$ such that $n_1+n_2\in \tau$, we see $u_{\sigma',\tau}=-n_{\rho_i}^*=-u_{\sigma,\tau}$.
	
	Therefore, we get 
\begin{align*}
		\sum_{\sigma\in \Sigma_{max}} \left(\prod_{\substack{i=1 \\ \rho_i\not\in\sigma}}^n a_i\right)\mu_D(\sigma)&
=\sum_{\sigma\in \Sigma_{max}} \left(\prod_{\substack{i=1 \\ \rho_i\not\in\sigma}}^n a_i\right) \sum_{\tau \text{ facet of }\sigma} a_{\sigma,\tau} u_{\sigma,\tau}\\&=\sum_{\tau \text{ facet of }\Sigma} \sum_{\substack{\sigma\in \Sigma_{\max} \\ \tau\subset\sigma}} \left(\prod_{\substack{i=1 \\ \rho_i\not\in\sigma}}a_i\right) u_{\sigma,\tau}=0.\tag*{\qedhere}
\end{align*}

\end{proof}

\begin{corollary} \label{corollary: optimal choice alpha 2}
	The $\mathbb{Q}$-divisor class $a((X,M),L)L$ can be represented by a $\mathbb{Q}$-divisor $D=a_1D_1+\dots+a_nD_n$ on $X$ such that $a_1,\dots,a_n>0$ and such that $a((X,M),L)\pr^*_M D+D_{(X,M)}$ is effective. Furthermore, for any such $D$, there exists a vector $\bfalpha\in \mathbb{R}_{>0}^k$ such that $D=\sum_{j=1}^k \alpha_j L(\sigma_j)$ and $\sum_{j=1}^k \alpha_j=a((X,M),L)$.
\end{corollary}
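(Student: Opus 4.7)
My approach is to establish the two assertions in turn. For the existence of $D$, I will exploit the quasi-properness of $(X,M)$ with respect to $L$, passing to a proper extension where the effectivity of an adjoint class automatically builds in the positivity of coefficients. The second assertion then follows by a direct application of Proposition \ref{prop: optimal choice alpha} to the rescaled divisor $D/a((X,M),L)$.

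For the existence, I start by applying Proposition \ref{prop: choice quasi-proper pair} to enlarge $\fM$ to $\tilde{\fM}:=\fM\cup\{m\bfe_1,\dots,m\bfe_n\}$ for $m$ sufficiently large, obtaining a proper toric pair $(X,\tilde{M})$ with $a((X,\tilde{M}),L)=a((X,M),L)=:a$. By the definition of the Fujita invariant, the class $a\pr^*_{\tilde{M}}L+K_{(X,\tilde{M})}$ lies in $\Eff^1(X,\tilde{M})$, and Proposition \ref{prop: effective cone generated by torus-invariant} yields a torus-invariant effective representative $E=\sum_{\bfm\in\Gamma_{\tilde{M}}}e_\bfm\tilde{D}_\bfm$ with $e_\bfm\geq 0$. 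The divisor $E-D_{(X,\tilde{M})}=\sum_{\bfm\in\Gamma_{\tilde{M}}}(e_\bfm+1)\tilde{D}_\bfm$ represents $a\pr^*_{\tilde{M}}L$, and by Proposition \ref{prop: Picard group toric pair} its difference with $\pr^*_{\tilde{M}}D'$ (for any representative $D'$ of $aL$ on $X$) is the pullback of a principal divisor on $X$. Correcting $D'$ by that character produces a torus-invariant $D=\sum a_iD_i$ in the class $aL$ with $\pr^*_{\tilde{M}}D=\sum(e_\bfm+1)\tilde{D}_\bfm$, i.e.\ $\sum_i a_im_i\geq 1$ for every $\bfm\in\Gamma_{\tilde{M}}$. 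Taking $\bfm=k\bfe_i$ (for the smallest $k$ with $k\bfe_i\in\tilde{\fM}_{\mathrm{red}}$, which exists by the standing hypothesis on $\fM$) gives $a_i\geq 1/k>0$, while the inclusion $\Gamma_M\subset\Gamma_{\tilde{M}}$ gives $a_\bfm\geq 1$ for $\bfm\in\Gamma_M$, so $\pr_M^*D+D_{(X,M)}$ is effective.

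For the second assertion, given such $D$, the rescaling $\tilde{D}:=D/a$ represents $L$ with positive coefficients $\tilde{a}_i:=a_i/a$. Proposition \ref{prop: optimal choice alpha} applied to $\tilde{D}$ yields
$$\sum_{\sigma\in\Sigma_{\max}}\left(\prod_{\rho_i\not\in\sigma}\tilde{a}_i\right)L(\sigma)=T\,\tilde{D},\qquad T:=\sum_{\sigma\in\Sigma_{\max}}\prod_{\rho_i\not\in\sigma}\tilde{a}_i>0.$$
Setting $\alpha_\sigma:=(a/T)\prod_{\rho_i\not\in\sigma}\tilde{a}_i$ produces $\bfalpha\in\mathbb{R}_{>0}^k$ with $\sum_\sigma\alpha_\sigma=a$ and $\sum_\sigma\alpha_\sigma L(\sigma)=a\tilde{D}=D$.

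The main obstacle is the simultaneous positivity $a_i>0$ and $a_\bfm\geq 1$ in the first assertion. Either condition alone is easy: positivity follows from bigness of $L$ via the interior of $\Eff^1(X)$, and the effectivity of $\pr^*_M D+D_{(X,M)}$ is built into the definition of $a$. However, a naive convex combination of separate witnesses for each condition can violate a tight constraint $a_\bfm=1$ upon interpolation. Quasi-properness resolves this gracefully: the extra generators $k\bfe_i$ lying in $\tilde{\fM}_{\mathrm{red}}$ in the proper extension force $ka_i\geq 1$ automatically, and the identity $a((X,\tilde{M}),L)=a((X,M),L)$ ensures that these additional constraints do not inflate the Fujita invariant, so the positivity comes for free from the effectivity data for the proper pair.
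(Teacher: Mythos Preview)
Your proof is correct and follows essentially the same route as the paper's: pass to a proper extension $(X,\tilde M)$ with the same Fujita invariant, extract a torus-invariant representative $D$ of $aL$ on $X$ from an effective witness of the adjoint class on $(X,\tilde M)$ (positivity of the $a_i$ then being automatic because some $k\bfe_i$ lies in $\Gamma_{\tilde M}$), and apply Proposition~\ref{prop: optimal choice alpha} to the rescaled divisor for the second assertion. Your write-up is more explicit than the paper's about why $E-D_{(X,\tilde M)}$ is genuinely of the form $\pr^*_{\tilde M}D$ (using the description of principal divisors from Proposition~\ref{prop: Picard group toric pair}), which is a point the paper leaves implicit.
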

\begin{proof}
	Let $(X,\overline{M})$ be a proper pair as in Definition \ref{def: quasi-proper pair}.
	Let $D=a_1D_1+\dots+a_nD_n$ be a $\mathbb{Q}$-divisor such that $a((X,\overline{M}),L)\pr^*_{\overline{M}} D+D_{(X,\overline{M})}$ is an effective $\mathbb{Q}$-divisor. As $(X,\overline{M})$ is proper, this implies $a_1,\dots, a_n>0$ as $D_{(X,\overline{M})}=-\sum_{\bfm\in \Gamma_{\overline{M}}} \tilde{D}_{\bfm}$ is a divisor representing the canonical class $K_{(X,\overline{M})}$.
	Since $a((X,\overline{M}),L)=a((X,M),L)$, the divisor $a((X,M),L)\pr^*_M D+D_{(X,M)}$ is simply the restriction of $a((X,\overline{M}),L)\pr^*_{\overline{M}} D+D_{(X,\overline{M})}$ to $\Div(X,M)_{\mathbb{Q}}$.
	Now Proposition \ref{prop: optimal choice alpha} implies that if we take $$\beta_j= \frac{\prod_{\rho_i\not \in \sigma_j} a_i}{\sum_{\sigma\in \Sigma_{\max}} \prod_{\rho_i\not\in\sigma} a_i},$$ then $D=\sum_{j=1}^k \beta_j L(\sigma_j)$. 
	
	
	By setting $\bfalpha:=a((X,M),L)\bfbeta$, we find that $$\sum_{j=1}^k \alpha_j \pr^*_M L(\sigma_j)+D_{(X,M)}= a((X,M),L)\pr^*_M D+D_{(X,M)}$$ is effective, $\sum_{j=1}^k \alpha_j=a((X,M),L)$ and $\alpha_j>0$ for all $j=1,\dots, k$.
\end{proof}

\subsubsection{Choice of linear forms}
In this section we will choose the set $\mathscr{L}$ of linear forms, and we verify that conditions (P2) and (P3) in Theorem \ref{theorem: Breteche 1} are satisfied with this choice.
\begin{assumption}\label{assumption: divisor in interior}
	Without loss of generality, we assume that we have chosen the representative $D=a_1D_1+\dots+a_nD_n$ of $L$ such that the $\mathbb{Q}$-divisor
	$a((X,M),L)\pr^*_M D+D_{(X,M)}=\sum_{\bfm\in \Gamma_M} \tilde{a}_{\bfm} \tilde{D}_{\bfm}$ is effective and has maximal support: we assume $\tilde{a}_{\bfm}>0$ for as many $\bfm\in \Gamma_M$ as possible for such a representative of $L$.
\end{assumption} 
Let $(X,M^\circ)\subset (X,M)$ be the toric pair as in Definition \ref{def: alpha-constant} and let $\bfalpha$ be as in Corollary \ref{corollary: optimal choice alpha 2}.
For $\bfm\in \Gamma_M$, the coefficient of $\tilde{D}_\bfm$ in $a((X,M),L)\pr^*_M D+D_{(X,M)}$ is given by $l_{\bfm}(\bfalpha)-1$. Thus, by Assumption \ref{assumption: divisor in interior}, $\bfm\in \Gamma_M$ lies in $\fM^\circ$ if and only if $l_{\bfm}(\bfalpha)=1$. Let $$\mathscr{L}=\{l_{\bfm}\mid \bfm\in \Gamma_{M^\circ}\} \text{ and }\mathscr{R}=\{l^{(1)},\dots, l^{(n)}\}.$$

Since $a_i>0$ for all $i=1,\dots,n$, $l^{(i)}(\bfalpha)=\frac{a_i}{a((X,M),L)}>0$. Furthermore, if $\bfm,\bfm'\in \mathbb{N}^n$ satisfy $\bfm< \bfm'$ using the natural partial order on $\mathbb{N}^n$, then $\Re(l_{\bfm}(\bfs))< \Re(l_{\bfm'}(\bfs))$ for all $\bfs\in \mathbb{C}^k$ satisfying $\Re(l^{(i)}(\bfs))>0$. As $\fM_{\red,\mon}\setminus \fM^\circ$ has a finite number of minimal elements in this ordering on $\mathbb{N}^n$, the continuity of the linear forms $l_{\bfm}$ and $l^{(i)}$ implies that there exist $\frac{1}{4}>\delta_1>0, \epsilon>0$ such that for all
$$\bfs\in \mathscr{D}(\delta_1)=\{\bfs\in \mathbb{C}^k\mid \Re(l(\bfs))>-\delta_1 \, \forall l\in \mathscr{L}\cup \mathscr{R}\},$$
we have that $\Re(l_{\bfm}(\bfalpha+\bfs))>1+\epsilon$ for all $\bfm\in \fM_{\red, \mon}\setminus \fM^\circ$ and $\Re(l^{(i)}(\bfalpha+\bfs))>\epsilon$.

Consider the function $$H(\bfs)=F(\bfs+\bfalpha)\prod_{\bfm\in \Gamma_{M^\circ}} l_{\bfm}(\bfs),$$
and write
$$\frac{F(\bfs)}{\prod_{\bfm\in \Gamma_{M^\circ}} \zeta(l_{\bfm}(\bfs))}=\prod_{p\, \mathrm{prime}} G_p(\bfs),$$
where $$G_p(\bfs)=\left(\sum_{\bfm\in \fM_{\red}}p^{-l_{\bfm}(\bfs)}\right)\prod_{\bfm\in \Gamma_{M^\circ}}(1-p^{-l_{\bfm}(\bfs)})$$
for all prime numbers $p$ not dividing $S$.

The product $$\left(\sum_{\bfm\in \fM^\circ} p^{-l_{\bfm}(\bfs)}\right)\prod_{\bfm\in \Gamma_{M^\circ}}(1-p^{-l_{\bfm}(\bfs)})$$ is a finite sum of the form $1+\sum_{\bfm\in I} c_{\bfm} p^{-l_{\bfm}(\bfs)}$, where $I\subset \fM_{\red,\mon}$ is a finite set disjoint from $\Gamma_{M^\circ}$. In particular we see that the absolute value of
$\left(\sum_{\bfm\in \fM^\circ} p^{-l_{\bfm}(\bfalpha+\bfs)}\right)\prod_{\bfm\in \Gamma_{M^\circ}}(1-p^{-l_{\bfm}(\bfalpha+\bfs)})-1$ is bounded by $\# I \cdot p^{-1-\epsilon}$
for all $\bfs\in \mathscr{D}(\delta_1)$. By writing $$G_p(\bfs)=\left(\sum_{\bfm\in \fM^\circ}p^{-l_{\bfm}(\bfs)}+\sum_{\bfm\in \fM_{\red}\setminus \fM^\circ}p^{-l_{\bfm}(\bfs)}\right)\prod_{\bfm\in \Gamma_{M^\circ}}(1-p^{-l_{\bfm}(\bfs)}),$$
we find that the function $G_p(\bfs)$ satisfies $G_p(\bfs+\bfalpha)=1+O(p^{-1-\epsilon})$ for all $\bfs\in \mathscr{D}(\delta_1)$ and for all prime numbers $p$ not dividing $S$, where the implied constant depends on $\delta_1$ but is independent of the prime $p$. Thus the product $G(\bfs)=\prod_{p\, \mathrm{prime}}G_p(\bfs)$ converges to a bounded holomorphic function on $\bfalpha+\mathscr{D}(\delta_1)$. Since $s\zeta(s+1)$ is a holomorphic function, this implies that $H(\bfs)$ can be analytically continued to a function on $\mathscr{D}(\delta_1)$ and therefore condition (P2) of Theorem \ref{theorem: Breteche 1} is satisfied.

Now, as in de la Bretèche's work \cite[\S 4.3]{Bre01Toric}, we will use the upper bound
$$z\zeta(z+1)\ll (\Im z+1)^{1-\min(\Re z,0)/3+\epsilon}, \quad T \geq \Re(z)\geq -\tfrac{1}{2},$$
valid for all $\epsilon>0$ and $T>0$,
which follows from \cite[Theorem II.3.8]{Ten15}. By shrinking $\delta_1$ if necessary, we can assume that $G$ extends to a holomorphic function on the topological closure of $\bfalpha+\mathscr{D}(\delta_1)$.
Now since $G(\bfs)$ is bounded on $\bfalpha+\mathscr{D}(\delta_1)$, this implies that condition (P3) in Theorem \ref{theorem: Breteche 1} is satisfied with $\delta_2=1/3$.

Thus all conditions of Theorem \ref{theorem: Breteche 1} are satisfied with the choices made above for $\bfalpha, \delta_1, \delta_2, \mathscr{L}$ and $\mathscr{R}$.

\subsubsection{Determining the rank}
To finish the proof of Lemma \ref{lemma: weak form asymptotics}, we need to determine the rank of $\mathscr{L}$. We will first compute the rank of the matrix given by the linear forms $l^{(1)},\dots, l^{(n)}$. For vectors $l_1,\dots,l_n\in \mathbb{R}^k$, we write $(l_1,\dots,l_n)$ for the matrix with rows $l_1,\dots, l_n$.
\begin{proposition} \label{prop: rank matrix}
	If $L$ is a big and nef $\mathbb{Q}$-divisor class, then the rank of the matrix $(l^{(1)},\dots, l^{(n)})$ is $\dim(X)+1$.
	Consequently, any $\mathbb{R}$-divisor in $\Div(X)_{\mathbb{R}}$ that is $\mathbb{R}$-linearly equivalent to $0$ lies in $$V=\left\langle \sum_{j=1}^k y_j \pr^*_M L(\sigma_j)\,\middle\vert\, (y_1,\dots, y_k)\in \mathbb{Q}^k,\, \sum_{j=1}^k y_j=0 \right\rangle.$$
\end{proposition}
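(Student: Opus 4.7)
The rank of the $n\times k$ matrix $(l^{(1)},\dots,l^{(n)})$ equals the dimension of the column span $W:=\langle L(\sigma_1),\dots,L(\sigma_k)\rangle_{\mathbb{R}}\subset \Div_T(X)_\mathbb{R}$, since the $j$-th column is exactly the coefficient vector of $L(\sigma_j)=\sum_i l^{(i)}(\mathbf{e}_j)D_i$. My plan is to prove $\dim W=d+1$ (where $d=\dim X$) by establishing matching upper and lower bounds, and then to deduce the claim about $V$.

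For the upper bound, the definition of $\mu_D(\sigma_j)$ gives $\langle\mu_D(\sigma_j),n_{\rho_i}\rangle=a_i$ whenever $\rho_i\subset\sigma_j$; combined with the formula for $l^{(i)}$ this yields the identity $L(\sigma_j)=D-\operatorname{div}(\chi^{\mu_D(\sigma_j)})$. Consequently each difference $L(\sigma_j)-L(\sigma_1)=\operatorname{div}(\chi^{\mu_D(\sigma_1)-\mu_D(\sigma_j)})$ lies in the image of the injection $N^\vee\hookrightarrow \Div_T(X)_\mathbb{R}$ from \eqref{equation: Picard group toric variety}, a subspace of dimension $d$. Thus $W\subseteq \mathbb{R}\,L(\sigma_1)+\im(N^\vee_\mathbb{R}\to \Div_T(X)_\mathbb{R})$, and since bigness of $L$ forces $L\neq 0$ in $\Pic(X)$, the divisor $L(\sigma_1)$ is not principal, so this sum is direct and $\dim W\le d+1$.

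For the lower bound I will use the polytope interpretation of the $\mu_D(\sigma_j)$. Nefness of $L$ makes $L(\sigma_j)$ effective, so $\mu_D(\sigma_j)$ lies in the polytope $P_D:=\{m\in N^\vee_\mathbb{R}:\langle m,n_{\rho_i}\rangle\le a_i\;\forall i\}$, and the $d$ equalities $\langle m,n_{\rho_i}\rangle=a_i$ for $\rho_i\subset\sigma_j$ single it out as a vertex of $P_D$; every vertex of $P_D$ arises in this way. Bigness of $L$ gives $D^d>0$, and the standard toric volume identity $D^d=d!\,\Volume(P_D)$ forces $P_D$ to be full-dimensional, so its vertices affinely span $N^\vee_\mathbb{R}$. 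In particular the differences $\mu_D(\sigma_j)-\mu_D(\sigma_1)$ span $N^\vee_\mathbb{R}$, and applying $N^\vee\to \Div_T(X)_\mathbb{R}$ shows the vectors $L(\sigma_1)-L(\sigma_j)$ span the $d$-dimensional image of $N^\vee$. Together with the nonprincipal $L(\sigma_1)$ this gives $\dim W\ge d+1$.

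For the consequence, any $\mathbb{R}$-divisor $E\in \Div(X)_\mathbb{R}$ that is $\mathbb{R}$-linearly equivalent to $0$ has the form $\operatorname{div}(\chi^\mu)$ for some $\mu\in N^\vee_\mathbb{R}$. By the lower-bound step, $\mu$ lies in the $\mathbb{R}$-span of $\{\mu_D(\sigma_1)-\mu_D(\sigma_j)\}_{j=2}^k$, so $E$ is an $\mathbb{R}$-combination of the differences $L(\sigma_j)-L(\sigma_1)$. Each such difference corresponds to a coefficient tuple $(y_i)$ with $y_1=-1$, $y_j=1$, other coordinates zero, and in particular $\sum_i y_i=0$; pulling back by $\pr^*_M$ and passing to the $\mathbb{R}$-span of rational combinations places $\pr^*_M E$ in $V$. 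The only substantive step is the polytope argument; it rests on the standard equivalence between bigness of a nef toric divisor and positivity of $\Volume(P_D)$, for which I would cite the relevant section of \cite{CLS11}.
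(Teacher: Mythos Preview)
Your argument is correct and follows essentially the same route as the paper's: both identify the rank with $\dim\langle L(\sigma_1),\dots,L(\sigma_k)\rangle$, observe that the differences $L(\sigma_j)-L(\sigma_1)$ are principal, and use the polytope $P_D$ together with bigness to obtain full-dimensionality (you via the volume formula $D^d=d!\,\Volume(P_D)$, the paper via \cite[Lemma 9.3.9]{CLS11}); for the consequence you argue constructively through the differences while the paper counts dimensions. One step worth tightening is your assertion that \emph{every} vertex of $P_D$ is some $\mu_D(\sigma_j)$: this amounts to $P_D=\operatorname{conv}\{\mu_D(\sigma_j)\}_j$, which is exactly where nefness (hence basepoint-freeness on a toric variety, \cite[Theorem 6.3.12]{CLS11}) is genuinely used---the paper invokes this explicitly, and your argument relies on it implicitly.
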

\begin{proof}
	Without loss of generality we can assume that $L$ is a divisor class, rather than just a $\mathbb{Q}$-divisor class. We represent $L$ by the divisor $D':=L(\sigma_1)=a_1'D_1+\dots+a_n'D_n$. Then we have $\mu_{D'}(\sigma_1)=0$, so the first column of the matrix $(l^{(1)},\dots, l^{(n)})$ is just $(a_1',\dots,a_n')$. Since $L$ is a nonzero divisor class, the divisor $L(\sigma_1)$ does not lie in the linear span of the divisors $L(\sigma_1)-L(\sigma_2),\dots, L(\sigma_1)-L(\sigma_k)$. Therefore $$\rank(l^{(1)},\dots, l^{(n)})=\rank(A)+1,$$ where $A$ is the matrix such that the coefficient in position $(i,j)$ is $\langle \mu_{D'}(\sigma_j), n_{\rho_i}\rangle$. Since a nef divisor on a toric variety is globally generated by \cite[Theorem 6.3.12]{CLS11}, we see by \cite[Theorem 6.1.7]{CLS11} that $\rank(A)=\dim P_L$, where $P_L$ is the polyhedron associated to $L$ as defined in \cite[\S 4.3]{CLS11}. By \cite[Lemma 9.3.9]{CLS11} we also have $\dim P_L=\dim (X)$, as $L$ is big.
	The vector space $V$ is contained in the kernel of $\Div_T(X)_\mathbb{Q}\rightarrow \Pic(X)_\mathbb{Q}$. By \cite[Theorem 4.2.1.]{CLS11} this kernel has dimension $\dim(X)$, but $\rank(l^{(1)},\dots, l^{(n)})=\dim(X)+1$ implies $V$ has dimension $\dim(X)$ as well, so $V$ is equal to the kernel.
\end{proof}
We view $\mathscr{L}$ as a linear map $\mathbb{Q}^k\rightarrow \mathbb{Q}^{\Gamma_{M^\circ}}$, so that $\#\Gamma_{M^\circ}-\rank(\mathscr{L})=\rank \coker \mathscr{L}$.
\begin{proposition} \label{prop: rank cokernel}
	The rank of $\coker \mathscr{L}$ is equal to $b(\mathbb{Q},(X,M),L)-1$. 
\end{proposition}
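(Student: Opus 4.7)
The plan is to identify $\coker\mathscr{L}$ with a specific quotient of $\Pic(X,M^\circ)_{\mathbb{Q}}$ and separately to identify $b(\mathbb{Q},(X,M),L)$ with $\rank\Pic(X,M^\circ)$.

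First I would observe that under the identification $\mathbb{Q}^{\Gamma_{M^\circ}}=\Div_T(X,M^\circ)_{\mathbb{Q}}$, the map $\mathscr{L}$ sends $\bfs$ to $\pr^*_{M^\circ}\bigl(\sum_{j=1}^k s_j L(\sigma_j)\bigr)$: indeed, $l_{\bfm}(\bfs)$ is by definition the coefficient of $\tilde{D}_{\bfm}$ in $\pr^*_M\bigl(\sum_{j=1}^k s_j L(\sigma_j)\bigr)$, and restricting the indices to $\Gamma_{M^\circ}$ is precisely the pullback to $(X,M^\circ)$. By Proposition \ref{prop: rank matrix}, the $\mathbb{Q}$-linear span of $L(\sigma_1),\dots,L(\sigma_k)$ in $\Div_T(X)_{\mathbb{Q}}$ equals $\mathbb{Q}\cdot L(\sigma_1)\oplus V$, where $V$ is the image of $N^\vee_{\mathbb{Q}}\to\Div_T(X)_{\mathbb{Q}}$; the direct sum decomposition uses that $L\neq 0$ in $\Pic(X)_{\mathbb{Q}}$ since $L$ is big. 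Applying $\pr^*_{M^\circ}$ yields $\im\mathscr{L}=\mathbb{Q}\cdot\pr^*_{M^\circ}L+V_{M^\circ}$, with $V_{M^\circ}$ the image of $N^\vee_{\mathbb{Q}}\to\Div_T(X,M^\circ)_{\mathbb{Q}}$. By Proposition \ref{prop: Picard group toric pair}, $V_{M^\circ}$ is exactly the kernel of $\Div_T(X,M^\circ)_{\mathbb{Q}}\to\Pic(X,M^\circ)_{\mathbb{Q}}$, so
\[
\coker\mathscr{L}\;\cong\;\Pic(X,M^\circ)_{\mathbb{Q}}\big/\mathbb{Q}\cdot[\pr^*_{M^\circ}L].
\]

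Next I would compute $b(\mathbb{Q},(X,M),L)$. By Assumption \ref{assumption: divisor in interior}, the support of the effective representative $E=\alpha\pr^*_M D+D_{(X,M)}$ of the adjoint class equals $\Gamma_M\setminus\Gamma_{M^\circ}$ and is maximal among the supports of all effective torus-invariant representatives. Consequently the minimal supported face of $\Eff^1(X,M)$ containing $[E]$ is spanned by $\{[\tilde{D}_{\bfm}]:\bfm\in\Gamma_M\setminus\Gamma_{M^\circ}\}$. Since $\Pic(X,M^\circ)$ is obtained from $\Pic(X,M)$ by killing precisely these classes (equivalently, by quotienting $\Div_T(X,M)$ by the summands not indexed by $\Gamma_{M^\circ}$), the dimension of this face equals $\rank\Pic(X,M)-\rank\Pic(X,M^\circ)$, and hence $b(\mathbb{Q},(X,M),L)=\rank\Pic(X,M^\circ)$.

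It remains to verify that $[\pr^*_{M^\circ}L]$ is nonzero in $\Pic(X,M^\circ)_{\mathbb{Q}}$. Assumption \ref{assumption: divisor in interior} forces $E$ to restrict to the zero divisor in $\Div_T(X,M^\circ)_{\mathbb{Q}}$, which translates to the identity
\[
\alpha\cdot[\pr^*_{M^\circ}L]\;=\;-K_{(X,M^\circ)}\;=\;\sum_{\bfm\in\Gamma_{M^\circ}}[\tilde{D}_{\bfm}]\qquad\text{in }\Pic(X,M^\circ)_{\mathbb{Q}}.
\]
This is a strictly positive combination of all the generators of $\Eff^1(X,M^\circ)$, hence lies in the relative interior of that cone. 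The main obstacle is to show that this interior does not contain the origin, i.e.\ that $\Eff^1(X,M^\circ)$ is strictly convex (pointed). My plan is to leverage the quasi-properness hypothesis: inside the proper pair $(X,\overline{M})$ the effective cone is pointed, and this property can be transferred to $\Eff^1(X,M^\circ)$ via the restriction of divisors and the maximal-support property of $E$. Granting this, $\rank\coker\mathscr{L}=\rank\Pic(X,M^\circ)-1=b(\mathbb{Q},(X,M),L)-1$, which is the claim.
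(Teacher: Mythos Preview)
Your overall strategy coincides with the paper's: identify $\coker\mathscr{L}$ with $\Pic(X,M^\circ)_{\mathbb{Q}}/\mathbb{Q}[\pr^*_{M^\circ}L]$ and show $b(\mathbb{Q},(X,M),L)=\rank\Pic(X,M^\circ)$. The only cosmetic difference in the first step is that you compute $\im\mathscr{L}$ directly, whereas the paper passes to the dual and computes $\ker\mathscr{L}^\vee=\{\bfx\in\Pic(X,M^\circ)^\vee_{\mathbb{Q}}:\bfx(\pr^*_{M^\circ}L)=0\}$; these are equivalent, and both rely on Proposition~\ref{prop: rank matrix} in the same way.

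The real divergence is in how you argue that $[\pr^*_{M^\circ}L]\neq 0$. Your route through pointedness of $\Eff^1(X,M^\circ)$ is left as a plan (``Granting this''), and it is genuinely nontrivial: pointedness is not obviously inherited under the restriction $\Pic(X,\overline{M})\to\Pic(X,M^\circ)$, and the maximal-support property of $E$ by itself does not give it. The paper avoids this detour entirely. It observes that $(X,M^\circ)$ is again quasi-proper with respect to $L$ (it sits inside the same proper pair witnessing quasi-properness of $(X,M)$, and has the same Fujita invariant by construction of $M^\circ$), and then uses this directly: were $\pr^*_{M^\circ}L$ linearly equivalent to zero, the set $\{t:t\,\pr^*_{M^\circ}L+K_{(X,M^\circ)}\in\Eff^1(X,M^\circ)\}$ would be all of $\mathbb{R}$ or empty, making $a((X,M^\circ),L)$ infinite, contradicting the finite value $a((X,\tilde{M}),L)$ coming from the proper pair. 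This one-line argument replaces your pointedness step and closes the gap.
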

\begin{proof}
	The cokernel of $\mathscr{L}$ is the dual space of the kernel of the dual map $\mathscr{L}^\vee\colon \mathbb{Q}^{\Gamma_{M^\circ}}\rightarrow \mathbb{Q}^k$. This kernel is 
	\begin{equation*}
		\left\{\bfx\in\mathbb{Q}^{\Gamma_{M^\circ}}\middle\vert
		\sum_{\bfm\in \Gamma_{M^\circ}}  l_{\bfm}(\mathbf{e}_j)x_{\bfm}=0 \text{ for all }j\in \{1,\dots,k\}
		\right\}.
	\end{equation*}
	Recall that, for all $j\in \{1,\dots,k\}$, $L(\sigma_j)=\sum_{i=1}^n l^{(i)}(\bfe_j)D_i$ by the defining formula \eqref{Equation: l^{(i)}} for $l^{(i)}(\bfe_j)$, so $\pr^*_{M^{\circ}} L(\sigma_j)= \sum_{\bfm\in \Gamma_{M^{\circ}}} l_{\bfm} (\bfe_j)\tilde{D}_{\bfm}$. This implies that the kernel is isomorphic to
	$$\{\bfx\in \Div(X,M^{\circ})^\vee_{\mathbb{Q}}\mid \bfx(\pr^*_{M^\circ} L(\sigma_j))=0 \, \forall j\in \{1,\dots, k\}\}.$$
	Every such function is zero on torus-invariant principal divisors, as Proposition \ref{prop: rank matrix} implies that these are the divisors of the form $\sum_{j=1}^k y_j \pr^*_{M}L(\sigma_j)$ for $(y_1,\dots,y_k)\in \mathbb{Q}^k$ satisfying $\sum_{j=1}^k y_j=0$. This implies that the kernel is naturally identified with
	$$\{\bfx\in \Pic(X,M^\circ)^\vee_{\mathbb{Q}}\mid \bfx(\pr^*_{M^\circ} L)=0\}.$$
	As $(X,M^\circ)$ is quasi-proper with respect to $L$ as $(X,M)$ is quasi-proper, $\pr^*_{M^\circ} L$ is not $\mathbb{Q}$-linearly equivalent to zero, so $\rank \coker \mathscr{L}=\rank \Pic(X,M^\circ)-1$.
	
	The class of a torus-invariant prime divisor $\tilde{D}_{\bfm}\in \Div(X,M)$ is contained in the minimal face of $\Eff^1(X,M)$ containing $a((X,M),L)\pr^*_M L+K_{(X,M)}$ if and only if $\bfm\in \Gamma_M\setminus \Gamma_{M^\circ}$, by construction of the pair $(X,M^\circ)$. Since the effective cone $\Eff^1(X,M)$ is generated by torus-invariant divisors by Proposition \ref{prop: effective cone generated by torus-invariant}, this implies that $b(\mathbb{Q},(X,M),L)=\rank \Pic(X,M^\circ)$ finishing the proof.
\end{proof}

\begin{proof}[Proof of Lemma \ref{lemma: weak form asymptotics}]
	Theorem \ref{theorem: Breteche 1} implies that
	$$N_{(X,M),L,S}(B)=B^{a((X,M),L)}(Q(\log B)+O(B^{-\theta})),$$
	where $Q$ is a polynomial has degree at most $b(\mathbb{Q},(X,M),L)-1$ and $\theta>0$. This finishes the proof of the lemma.		
\end{proof}

\subsection{Proof of Theorem \ref{theorem: full asymptotics}, part II: the leading constant} \label{section: leading constant}
Now we will show that the polynomial $Q$ has the expected degree for any big and nef $\mathbb{Q}$-divisor $L$. We will furthermore compute the leading constant under the assumption that the $\mathbb{Q}$-divisor $L$ is toric adjoint rigid with respect to $(X,M)$.

We first notice that it suffices to prove Theorem \ref{theorem: full asymptotics} for adjoint rigid and toric adjoint rigid $\mathbb{Q}$-divisors $L$ satisfying $a((X,M),L)=1$.
\begin{proposition}
	To show that the polynomial $Q$ obtained in Lemma \ref{lemma: weak form asymptotics} has degree $b(\mathbb{Q},(X,M),L)-1$, it suffices to assume that $a((X,M),L)=1$, $S=1$ and $L$ is toric adjoint rigid with respect to $(X,M)$.
\end{proposition}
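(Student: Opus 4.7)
The plan is to establish the reduction through three independent simplifications, each effected by a direct comparison of counting functions.

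\textbf{Step 1: Reducing $a((X,M),L)$ to $1$.} I would set $L' := a((X,M),L)\, L \in \Pic(X)_\mathbb{Q}$. Heights scale as $H_{L'} = H_L^{a((X,M),L)}$, so $N_{(X,M),L,S}(B) = N_{(X,M),L',S}(B^{a((X,M),L)})$. The adjoint class $\pr^*_M L' + K_{(X,M)}$ coincides with $a((X,M),L)\pr^*_M L + K_{(X,M)}$, so $a((X,M),L') = 1$, $b(\mathbb{Q},(X,M),L') = b(\mathbb{Q},(X,M),L)$, and toric adjoint rigidity is preserved. Since the substitution $B \mapsto B^{a((X,M),L)}$ does not alter the degree of a polynomial in $\log B$, the degree statement transfers between $L'$ and $L$.

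\textbf{Step 2: Reducing $S$ to $1$.} The inclusion $(\cX,\cM)(\mathbb{Z}) \subset (\cX,\cM)(\mathbb{Z}[\tfrac{1}{S}])$ immediately yields $N_{(X,M),L,1}(B) \leq N_{(X,M),L,S}(B)$. Lemma \ref{lemma: weak form asymptotics} already bounds $\deg Q \leq b(\mathbb{Q},(X,M),L) - 1$ from above for general $S$, so any matching lower bound for $N_{(X,M),L,1}$ forces equality in the general case.

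\textbf{Step 3: Passage to a toric adjoint rigid subpair.} After Step 1, fix an effective torus-invariant representative $E$ of $\pr^*_M L + K_{(X,M)}$, and let $(X,M^\circ) \subset (X,M)$ be the subpair of Definition \ref{def: alpha-constant}. By construction $E$ is supported on $\{\tilde{D}_\bfm \mid \bfm \in \Gamma_M \setminus \Gamma_{M^\circ}\}$, so its restriction to $\Div_T(X,M^\circ)$ vanishes and $\pr^*_{M^\circ} L + K_{(X,M^\circ)} = 0$ in $\Div_T(X,M^\circ)$. Combined with the inequality $a((X,M),L) \leq a((X,M^\circ),L)$ coming from \cite[Proposition 4.41]{Moe25conjecture} (inclusion of subpairs), this forces $a((X,M^\circ),L) = 1$ and shows $L$ is toric adjoint rigid with respect to $(X,M^\circ)$ (the adjoint class is represented uniquely by the zero divisor). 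Proposition \ref{prop: rank cokernel} gives $b(\mathbb{Q},(X,M^\circ),L) = \rank \Pic(X,M^\circ) = b(\mathbb{Q},(X,M),L)$, so the target degree is unchanged. Quasi-properness of $(X,M^\circ)$ with respect to $L$ follows by starting from a proper extension $(X,\tilde{M})$ witnessing quasi-properness of $(X,M)$ and forming the analogous $\tilde{M^\circ}$, applying Proposition \ref{prop: choice quasi-proper pair} if necessary to enlarge by large multiples of basis vectors. Finally, monotonicity $(\cX,\cM^\circ)(\mathbb{Z}) \subset (\cX,\cM)(\mathbb{Z})$ propagates the desired lower bound from $N_{(X,M^\circ),L,1}$ to $N_{(X,M),L,1}$.

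I expect the main obstacle to lie in Step 3, specifically in verifying that passage to $(X,M^\circ)$ preserves quasi-properness and that the $a$- and $b$-invariants genuinely match; the first two reductions reduce to elementary manipulations of the height function and trivial monotonicity of the counting function.
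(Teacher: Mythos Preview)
Your three-step reduction is the same strategy the paper uses: rescale $L$ to force $a=1$, then pass to the subpair $(X,M^\circ)$ and to $S=1$ via the pointwise inequality $N_{(X,M),L,S}(B)\ge N_{(X,M^\circ),L,1}(B)$, and finally invoke the toric adjoint rigid case as a lower bound.

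There is, however, a gap in Step~3. The monotonicity in \cite[Proposition 4.41]{Moe25conjecture} goes the other way: the inclusion $(X,M^\circ)\subset (X,M)$ yields $a((X,M^\circ),L)\le a((X,M),L)=1$, not $\ge$ (compare its use in the proof of Proposition~\ref{prop: choice quasi-proper pair}). So you have not established $a((X,M^\circ),L)\ge 1$; your argument for quasi-properness of $(X,M^\circ)$ then becomes circular, and the parenthetical ``the adjoint class is represented uniquely by the zero divisor'' is unjustified (you have produced one effective representative, not shown uniqueness). Both issues are repaired by taking $E$ with \emph{maximal} support among effective torus-invariant representatives, as in Assumption~\ref{assumption: divisor in interior}. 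With that choice, $\Gamma_M\setminus\Gamma_{M^\circ}$ is precisely the set of $\bfm$ with $[\tilde D_\bfm]$ lying on the minimal face $F$ of $\Eff^1(X,M)$ containing the adjoint class; picking a supporting functional $\ell$ for $F$ (so $\ell\ge 0$ on $\Eff^1(X,M)$, $\ell|_F=0$, and $\ell([\tilde D_\bfm])>0$ for $\bfm\in\Gamma_{M^\circ}$) gives $\ell(\pr_M^*L)=\ell(-K_{(X,M)})>0$, and then any effective lift of a putative adjoint representative on $(X,M^\circ)$ at level $t<1$ would have $\ell$-value $(t-1)\ell(\pr_M^*L)<0$, a contradiction. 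The same functional forces any effective torus-invariant divisor on $(X,M^\circ)$ with trivial class to vanish, giving toric adjoint rigidity. The paper's own proof asserts these properties of $(X,M^\circ)$ by reference to the setup of Proposition~\ref{prop: rank cokernel} rather than spelling out this argument.
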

\begin{proof}
	We can assume without loss of generality that $a((X,M),L)=1$ since the height function satisfies $H_{tL}(\bfx)=H_{L}(\bfx)^t$ for any $t\in \mathbb{Q}_{>0}$, and thus $N_{(X,M),tL,S}(B)=N_{(X,M),L,S}(B^t)$.
	The pair $(X,M^\circ)$ is a pair that is quasi-proper with respect to $L$, and $L$ is toric adjoint rigid with respect to $(X,M^\circ)$. Furthermore $a((X,M),L)=a((X,M^\circ),L)$ and $b(\mathbb{Q},(X,M),L)=b(\mathbb{Q},(X,M^\circ),L)=\rank \Pic(X,M^\circ)$ as in the proof of Proposition \ref{prop: rank cokernel}. By Lemma \ref{lemma: weak form asymptotics}, we know that $N_{M,L,S}(B)=B^{a((X,M),L)} (Q(\log B)+O(B^{-\theta}))$ as $B\rightarrow \infty$ for some $\theta>0$ and some polynomial $Q$ of degree at most $b(\mathbb{Q},(X,M),L)-1$.
	Note that furthermore $$N_{(X,M),L,S}(B)\geq N_{(X,M^\circ),L,1}(B).$$
	
	If we assume that Theorem \ref{theorem: full asymptotics} is true if $a((X,M),L)=1$, $S=1$ and $L$ is toric adjoint rigid with respect to $(X,M)$, then this implies $$N_{(X,M^\circ),L,1}(B)=B^{a((X,M),L)}(Q'(\log B)+O(B^{-\theta})),$$
	for some polynomial $Q'$ of degree $b(\mathbb{Q},(X,M),L)-1$. Now the basic inequality
	$$N_{(X,M),L,S}(B)\geq N_{(X,M^\circ),L,1}(B)$$
	implies that the degree of $Q$ is at least the degree of $Q'$, and thus $Q$ has degree $b((X,M),L)-1$ as well.
\end{proof}

\begin{assumption}
	Henceforth we assume that $L$ is adjoint rigid with respect to $(X,M)$, or that it is toric adjoint rigid with respect to $(X,M)$ and $S=1$, and in these cases we will compute the leading coefficient of the polynomial $Q$.
\end{assumption}
\begin{assumption}
	The constant $C_\infty$ in Theorem \ref{theorem: full asymptotics} does not depend on the choice of the integers $d_1,\dots, d_n$ determining the pair $(X,\overline{M^\circ})$ as in Notation \ref{notation: fan toric pair}. Therefore, we will assume that the integers are chosen large enough to ensure $a(\mathbb{Q},(X,M),L)=a(\mathbb{Q},(X,\overline{M^\circ}),L)$. Note that such integers exist as $(X,M^\circ)$ is quasi-proper with respect to $L$.
\end{assumption}
We will prove Theorem \ref{theorem: full asymptotics} using another theorem of de la Bretèche.
\begin{theorem}\cite[Théorème 2(ii), Remarques (ii)]{Bre01Sum} \label{theorem: Breteche 2}
	In the setting of Theorem \ref{theorem: Breteche 1}, assume that the following additional conditions are satisfied:
	\begin{enumerate}
		\item [(C1)] There exists a function $\tilde{H}$ such that $H(\bfs)=\tilde{H}(l_1(\bfs),\dots,l_{\tilde{n}}(\bfs))$;
		\item [(C2)] the vector $(1,\dots,1)\in \mathbb{R}^k$ is a strictly positive linear combination of $l_1,\dots, l_{\tilde{n}}$;
		\item [(C3)] $l_1(\bfalpha)=\dots=l_{\tilde{n}}(\bfalpha)=1$.
	\end{enumerate}
	Then the polynomial $Q$ satisfies the relation
	$$Q(\log B)=C_0 B^{-\sum_{j=1}^k \alpha_j}\mathrm{Volume}(D(B))+O(\log(B)^{\rho-1}),$$
	as $B\rightarrow \infty$. Here $\rho:=\tilde{n}-\rank(l_1,\dots,l_{\tilde{n}})$, $C_0:=H(0,\dots,0)$ and
	$$D(B)=\left\{\mathbf{y}\in [1,\infty)^{\tilde{n}}\middle|\; \prod_{i=1}^{\tilde{n}} y_i^{l_i(\mathbf{e}_j)}\leq B\quad \forall j=1,\dots,k\right\}.$$
\end{theorem}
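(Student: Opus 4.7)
The goal is to sharpen the conclusion of Theorem \ref{theorem: Breteche 1}, which already provides the polynomial $Q$ of degree $\rho:=\tilde{n}-\rank(l_1,\dots,l_{\tilde{n}})$, by identifying its leading behaviour explicitly with the geometric quantity $C_0B^{-\sum_j\alpha_j}\mathrm{Volume}(D(B))$.

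The strategy is to compare $F$ against a ``volume'' Dirichlet series. Define the model
\[
F_0(\bfs):=\prod_{i=1}^{\tilde{n}}\zeta(l_i(\bfs)),
\]
whose nonnegative Dirichlet coefficients enumerate tuples $\mathbf{y}\in(\mathbb{N}^*)^{\tilde{n}}$ with $\prod_{i}y_i^{l_i(\bfe_j)}=r_j$ for each $j$. Its partial sum is
\[
S_0(B)=\#\bigl((\mathbb{N}^*)^{\tilde{n}}\cap D(B)\bigr),
\]
which, by a standard lattice-point vs.\ volume estimate for $D(B)$ (polyhedral in logarithmic coordinates), satisfies $S_0(B)=\mathrm{Volume}(D(B))+O(B^{\sum_j\alpha_j}(\log B)^{\rho-1})$. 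Condition (C3) places each pole of $\zeta(l_i(\bfs))$ at $\bfs=\bfalpha$; the function $H_0(\bfs):=F_0(\bfs+\bfalpha)\prod_i l_i(\bfs)=\prod_i l_i(\bfs)\zeta(1+l_i(\bfs))$ is holomorphic on $\mathscr{D}(\delta_1)$ with $H_0(\mathbf{0})=1$, so $F_0$ fulfils the hypotheses of Theorem \ref{theorem: Breteche 1} with the same $\bfalpha$ and linear forms $\mathscr{L}$.

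The heart of the argument is a residue comparison between $F$ and $C_0F_0$ near $\bfs=\bfalpha$. By a multivariate Perron-type formula,
\[
S(B)=\frac{1}{(2\pi i)^k}\int_{\Re\bfs=\bfalpha+\boldsymbol{\epsilon}}F(\bfs)\prod_{j=1}^{k}\frac{B^{s_j}}{s_j}\,d\bfs+O\bigl(B^{\sum_j\alpha_j-\theta}\bigr),
\]
and an analogous identity holds for $S_0(B)$. Shifting both contours leftward past the hyperplane arrangement $\{l_i(\bfs-\bfalpha)=0\}_{i=1}^{\tilde{n}}$ as in the proof of Theorem \ref{theorem: Breteche 1} (the uniform bounds on the shifted contours coming from (P3)), the polynomial contributions $B^{\sum_j\alpha_j}Q(\log B)$ and $B^{\sum_j\alpha_j}Q_0(\log B)$ emerge as sums of iterated residues. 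Condition (C1) supplies the local expansion
\[
F(\bfs+\bfalpha)=\frac{\tilde{H}(l_1(\bfs),\dots,l_{\tilde{n}}(\bfs))}{\prod_i l_i(\bfs)}=\frac{C_0}{\prod_i l_i(\bfs)}+R(\bfs),
\]
where Taylor expansion of $\tilde{H}$ at the origin shows that the remainder $R$ is a finite sum each of whose summands is missing at least one factor $l_i(\bfs)^{-1}$. An analogous Laurent expansion using $z\zeta(1+z)=1+\gamma z+O(z^2)$ gives $C_0F_0(\bfs+\bfalpha)=C_0/\prod_i l_i(\bfs)+R_0(\bfs)$ with $R_0$ of the same structure. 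The top iterated residue, which involves all $\tilde{n}$ factors simultaneously, is therefore identical in the two integrals, while any iterated residue employing strictly fewer than $\tilde{n}$ of the hyperplanes yields a polynomial in $\log B$ of degree at most $\rho-1$. Consequently the contributions of $R$ and $R_0$ are both $O(B^{\sum_j\alpha_j}(\log B)^{\rho-1})$, and $S(B)=C_0S_0(B)+O(B^{\sum_j\alpha_j}(\log B)^{\rho-1})$.

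Combining this with the lattice-point estimate for $S_0(B)$ gives $S(B)=C_0\mathrm{Volume}(D(B))+O(B^{\sum_j\alpha_j}(\log B)^{\rho-1})$; invoking Theorem \ref{theorem: Breteche 1} to rewrite the left-hand side as $B^{\sum_j\alpha_j}Q(\log B)+O(B^{\sum_j\alpha_j-\theta})$ and dividing by $B^{\sum_j\alpha_j}$ yields the desired relation. The main technical obstacle is the precise degree bookkeeping for the iterated residues: one must verify that any iterated residue that ``skips'' at least one of the factors $l_i(\bfs)^{-1}$ contributes a polynomial in $\log B$ of degree strictly less than $\rho$. This is a combinatorial statement about the hyperplane arrangement $\{l_i(\bfs)=0\}_{i=1}^{\tilde{n}}$ of corank $\rho$ in $\mathbb{C}^k$, and this is where the full strength of (C1)--(C3) is needed; in particular (C2) ensures that the shifted contours can be placed so that the residues at $\bfs=\bfalpha$ dominate the remaining contour integral, which is bounded of order $B^{\sum_j\alpha_j-\theta}$ by (P3).
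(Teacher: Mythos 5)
This statement is quoted verbatim from de la Bret\`eche (\cite[Th\'eor\`eme 2(ii), Remarques (ii)]{Bre01Sum}); the paper offers no proof of it, so your sketch has to stand as a proof of de la Bret\`eche's theorem itself, and as such it has two genuine gaps. First, the comparison $S_0(B)=\mathrm{Volume}(D(B))+O\bigl(B^{\sum_j\alpha_j}(\log B)^{\rho-1}\bigr)$ is not a citable ``standard lattice-point vs.\ volume estimate''. The unit-cube covering argument gives the lower bound $\mathrm{Volume}(D(B))\leq S_0(B)$, but the matching upper bound requires enlarging $D(B)$ so that coordinates may drop below $1$, and that enlarged region typically has \emph{infinite} volume: already for $\tilde{n}=2$, $k=1$, $l_1=l_2=s$ (i.e.\ $F_0=\zeta(s)^2$) the set $\{y_1y_2\leq B,\ y_i\geq 0\}$ is unbounded in measure, and the true discrepancy between $\sum_{n\leq B}d(n)$ and $\mathrm{Volume}(D(B))$ is of size comparable to $B=B^{\sum_j\alpha_j}(\log B)^{\rho-1}$, i.e.\ exactly at the critical order. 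So this step sits at the edge of the allowed error and needs an actual argument (dissection/induction on $\tilde{n}$), not a one-line appeal.

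Second, and more seriously, you defer precisely the heart of the theorem: the claim that every iterated residue omitting at least one factor $l_i(\bfs)^{-1}$ contributes only $O\bigl(B^{\sum_j\alpha_j}(\log B)^{\rho-1}\bigr)$ is labelled ``the main technical obstacle'' and never proved. The naive dimension count does not settle it: if the omitted form $l_i$ is not in the span of the remaining ones, then $(\tilde{n}-1)-\rank(\text{remaining})=\tilde{n}-\rank=\rho$, so the degree in $\log B$ does not drop for free, and one must use (C2) and (C3) quantitatively (positivity of the combination and $l_i(\bfalpha)=1$) to show such terms lose either a power of $B$ or a power of $\log B$. Related points are also glossed: the ``multivariate Perron formula'' with error $O(B^{\sum_j\alpha_j-\theta})$ and the shift of $k$ contours across the hyperplane arrangement are not quotable facts but amount to re-deriving the analytic machinery of \cite{Bre01Sum} (which proceeds by iterated one-variable arguments controlled via (P3)); and the splitting $F(\bfs+\bfalpha)=C_0/\prod_i l_i(\bfs)+R(\bfs)$ with $R$ ``missing a factor'' ignores the non-polynomial Taylor remainder of $\tilde{H}$, which must be bounded uniformly on the shifted contours. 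In short, the outline has the right flavour (a residue/volume comparison is indeed what underlies de la Bret\`eche's proof), but the two steps that carry all the difficulty are asserted rather than established, so the proposal does not yet constitute a proof.
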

We will apply this theorem for the same series as in the proof of Lemma \ref{lemma: weak form asymptotics}. We thus only need to verify conditions (C1), (C2) and (C3) and then estimate the volume and show that $H(0,\dots,0)\neq 0$. Due to the way we chose $\mathscr{L}$, condition (C3) is trivially satisfied. We first show that condition (C2) is satisfied.
\begin{proposition}
	If $(X,M)$ is a smooth toric pair such that $0\in \Pic(X,M)$ is a toric rigid divisor, then the monoid $N_{M}^+\subset N$ introduced in Definition \ref{def: invariants} is a lattice.
\end{proposition}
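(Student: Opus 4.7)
The plan is to argue via the contrapositive, using a cone-theoretic criterion. Let $C' = \mathbb{R}_{\geq 0} \cdot \phi(\Gamma_M) \subseteq N_{\mathbb{R}}$ denote the rational polyhedral cone generated by the vectors $\phi(\bfm)$ for $\bfm \in \Gamma_M$. The key observation, supplied by the proof of Proposition \ref{prop: Picard group toric pair}, is that every $\mu \in N^\vee$ gives a principal torus-invariant divisor
$$\pr_M^*\div(\chi^\mu) = \sum_{\bfm \in \Gamma_M} \langle \mu, \phi(\bfm)\rangle \tilde{D}_{\bfm},$$
so toric rigidity of $0$ is equivalent to the statement that any such divisor which is effective must vanish identically.

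First I would show that $C'$ is a linear subspace of $N_{\mathbb{R}}$. Suppose not; then $C'$ has a proper lineality space, so there is a supporting hyperplane witnessing a $\mu \in N^\vee_{\mathbb{R}}$ with $\langle \mu, \phi(\bfm)\rangle \geq 0$ for all $\bfm \in \Gamma_M$ and $\langle \mu, \phi(\bfm_0)\rangle > 0$ for some $\bfm_0 \in \Gamma_M$. Since $C'$ is rational we may take $\mu$ rational, and by scaling $\mu \in N^\vee$. Then $\pr_M^*\div(\chi^\mu)$ is a nonzero effective torus-invariant $\mathbb{Q}$-divisor linearly equivalent to $0$, contradicting the toric rigidity of the zero divisor.

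Next I would promote the cone-theoretic conclusion to a monoid-theoretic one: since $N_M^+$ is generated as a monoid by $\phi(\Gamma_M)$ and is closed under addition, it suffices to show that $-\phi(\bfm) \in N_M^+$ for every $\bfm \in \Gamma_M$. Because $C'$ is a subspace, $-\phi(\bfm) \in C'$, so there is a $\mathbb{Q}_{\geq 0}$-expression $-\phi(\bfm) = \sum_{\bfm' \in \Gamma_M} r_{\bfm'}\phi(\bfm')$ with $r_{\bfm'} \in \mathbb{Q}_{\geq 0}$. Clearing denominators by a positive integer $N$ gives $-N\phi(\bfm) \in N_M^+$. The identity
$$-\phi(\bfm) = -N\phi(\bfm) + (N-1)\phi(\bfm)$$
then exhibits $-\phi(\bfm)$ as a sum of two elements of $N_M^+$, so $-\phi(\bfm) \in N_M^+$. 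Consequently $N_M^+$ is closed under negation, hence a subgroup of $N$, and being finitely generated and torsion-free it is a lattice.

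The main obstacle I anticipate is the passage from the cone-level conclusion (that $C'$ is a subspace, valid over $\mathbb{Q}_{\geq 0}$) to the integral statement that $N_M^+$ is closed under the negation of each generator; this is the reason for the $-N\phi(\bfm) + (N-1)\phi(\bfm)$ trick, which absorbs the denominator into the monoid structure. Everything else is a direct translation between the divisorial and lattice-theoretic pictures via the formula for $\pr_M^*\div(\chi^\mu)$.
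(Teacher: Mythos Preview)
Your argument is correct and follows essentially the same route as the paper: both show that toric rigidity of $0$ forces the cone $C'=\mathbb{R}_{\geq 0}\cdot\phi(\Gamma_M)$ to be a linear subspace via a supporting-hyperplane argument translated into a nonzero effective principal divisor. The only difference is one of presentation: the paper asserts in a single sentence that ``if $N_M^+$ is not a lattice then the cone generated by $N_M^+$ is not a vector space,'' whereas you supply an explicit proof of the contrapositive (your second paragraph, with the $-N\phi(\bfm)+(N-1)\phi(\bfm)$ trick). Your version is therefore slightly more complete on this point; otherwise the two arguments coincide.
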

\begin{proof}
	We argue by contradiction, and assume that $N_{M}^+$ is not a lattice. Then the cone generated by $N_{M}^+$ is not a vector space, and thus there exists a linear form $f\colon N_{\mathbb{R}}\rightarrow \mathbb{R}$ such that the half-space $H=\{\mathbf{n}\in N_{\mathbb{R}}\mid f(\mathbf{n})\geq 0\}$ contains $N_{M}^+$ but not $-N_{M}^+$. Since $(X,M)$ is smooth, $N_M^+$ is finitely generated, so the linear form $f$ can be chosen such that it restricts to a homomorphism $N\rightarrow \mathbb{Z}$, i.e. to an element in $N^\vee$. Using the description of $\Pic(X,M)$ given in Proposition \ref{prop: Picard group toric pair}, this implies that the divisor $\sum_{\bfm\in \Gamma_{M}} f(\phi(\bfm)) \tilde{D}_\bfm$ is linearly equivalent to $0$. By construction $f(\phi(\bfm))\geq 0$ for all $\bfm\in \Gamma_M$ and $f(\phi(\bfm))\neq 0$ for some $\bfm\in \Gamma_M$, so this is a nontrivial torus-invariant effective divisor. This is in contradiction with the fact that $0$ is toric adjoint rigid with respect to $(X,M)$.
\end{proof}
Using the above proposition and the fact that $L$ is toric adjoint rigid with respect to $(X,M)$, there exist coefficients $c_{\bfm}>0$ corresponding to the generators $\bfm\in \Gamma_{M^\circ}$ such that $\sum_{\bfm\in \Gamma_{M^\circ}} c_{\bfm} \phi(\bfm)=0$, where $\phi\colon \mathbb{N}^n\rightarrow N$ is the homomorphism in Definition \ref{def: invariants}. Therefore the sum $$\sum_{\bfm\in \Gamma_{M^\circ}} l_{\bfm}(\mathbf{e}_j)c_\bfm=\sum_{\bfm\in \Gamma_{M^\circ}} (a_\bfm-\langle \mu_L(\sigma_j),\phi(\bfm)\rangle)c_\bfm=\sum_{\bfm\in \Gamma_{M^\circ}}c_\bfm \sum_{i=1}^n a_im_i>0$$ does not depend on $j$, 
and thus
$$\sum_{\bfm\in \Gamma_{M^\circ}} l_{\bfm} c_\bfm=\sum_{\bfm\in \Gamma_{M^\circ}}c_\bfm \sum_{i=1}^n a_im_i \cdot(1,\dots,1),$$
so
$\mathcal{B}=(1,\dots,1)$ is a positive linear combination of the linear forms in $\mathscr{L}$, and hence condition (C2) of Theorem \ref{theorem: Breteche 2} is satisfied.
Finally, condition (C1) will follow from the following proposition.
\begin{proposition} \label{prop: characterisation adjoint rigid}
	The $\mathbb{Q}$-divisor $L$ is adjoint rigid with respect to $(X,M)$ if and only if for every $i=1,\dots,n$ the linear form $l^{(i)}$ lies in the linear span of $\mathscr{L}$.
	Similarly, the divisor $L$ is toric adjoint rigid with respect to $(X,M)$ if and only if for every $\bfm\in \fM$ the linear form $l_{\bfm}$ lies in the linear span of $\mathscr{L}$.
\end{proposition}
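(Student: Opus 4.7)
The plan is to translate both sides of each biconditional into a common framework in the finite-dimensional space $\mathbb{Q} \oplus N_{\mathbb{Q}}$ and then verify the equivalences. By Proposition \ref{prop: rank matrix}, the map $\pi \colon \mathbb{Q}^k \twoheadrightarrow \mathbb{Q} \oplus N^\vee_{\mathbb{Q}}$ sending $\bfs$ to $(\sum_j s_j,\, \sum_j s_j \mu_D(\sigma_j))$ is surjective, and each $l^{(i)}$ and $l_\bfm$ factors through $\pi$ as $(t, \mu) \mapsto t a_i - \langle \mu, n_{\rho_i}\rangle$ and $(t, \mu) \mapsto t a_\bfm - \langle \mu, \phi(\bfm)\rangle$. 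Dualising via the identification $(\mathbb{Q} \oplus N^\vee_{\mathbb{Q}})^* \cong \mathbb{Q} \oplus N_{\mathbb{Q}}$, these forms correspond to the vectors $(a_i, n_{\rho_i})$ and $(a_\bfm, \phi(\bfm))$, while $\mathscr{L}$ corresponds to $\{(1, \phi(\bfm')) : \bfm' \in \Gamma_{M^\circ}\}$ (using $a_{\bfm'} = 1$ from Assumption \ref{assumption: divisor in interior}), spanning a subspace $V_\circ \subseteq \mathbb{Q} \oplus N_{\mathbb{Q}}$. Thus ``$l_\bfm \in \mathrm{span}(\mathscr{L})$'' becomes ``$(a_\bfm, \phi(\bfm)) \in V_\circ$'', and analogously for $l^{(i)}$ with $(a_i, n_{\rho_i})$.

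Next I will characterise (toric) adjoint rigidity via Picard group manipulations. By Proposition \ref{prop: Picard group toric pair}, alternative effective torus-invariant representatives of the adjoint divisor $E = \sum_{\bfm \in \Gamma_M}(a_\bfm - 1)\tilde D_\bfm$ are of the form $E + \pr_M^*\div\chi^\mu$ for $\mu \in N^\vee_{\mathbb{Q}}$, and the constraints $\tilde a_\bfm + \langle \mu, \phi(\bfm)\rangle \geq 0$ for $\bfm \in \Gamma_M \setminus \Gamma_{M^\circ}$ are automatic after rescaling $\mu$ by a sufficiently small positive constant. Consequently $L$ is toric adjoint rigid iff $\{\mu \in N^\vee_{\mathbb{R}} : \langle \mu, \phi(\bfm)\rangle \geq 0 \text{ for all } \bfm \in \Gamma_{M^\circ}\} \subseteq \{\mu : \langle \mu, \phi(\bfm)\rangle = 0 \text{ for all } \bfm \in \Gamma_M\}$; dualising in $N_{\mathbb{R}}$, this is equivalent to the cone generated by $\{\phi(\bfm') : \bfm' \in \Gamma_{M^\circ}\}$ being a linear subspace of $N_{\mathbb{R}}$ equal to $\mathrm{span}(\phi(\bfm) : \bfm \in \Gamma_M)$. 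Adjoint rigidity is the strictly stronger requirement that no non-torus-invariant alternative representative arises, which additionally forbids perturbations $\pr_M^*\div f$ for non-monomial $f \in K(X)^\times$.

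For the toric adjoint rigid direction, my key observation will be that the cone being a linear subspace forces a positive-sum relation $\sum_{\bfm'} c_{\bfm'} \phi(\bfm') = 0$: given any $\bfm_0' \in \Gamma_{M^\circ}$, one writes $-\phi(\bfm_0')$ as a nonnegative combination of the other $\phi(\bfm')$ and rearranges. Rescaling then gives $(1, 0) \in V_\circ$, hence $V_\circ = \mathbb{Q} \oplus W$ where $W = \mathrm{span}(\phi(\bfm') : \bfm' \in \Gamma_{M^\circ})$. Combining this with $W = \mathrm{span}(\phi(\bfm) : \bfm \in \Gamma_M)$ and using that $\fM$ contains a positive multiple of each basis vector (so $W$ contains every $n_{\rho_i}$, whence $\phi(\bfm) \in W$ for every $\bfm \in \fM$), I obtain $(a_\bfm, \phi(\bfm)) \in V_\circ$ for all $\bfm \in \fM$. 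For the converse, specialising the containment to $\bfm$ a positive multiple of $\bfe_i$ recovers $n_{\rho_i} \in W$, and reversing the duality argument gives back the cone condition. The adjoint rigid equivalence is proved by a parallel argument, in which the additional non-monomial freedom precisely matches the difference between requiring $(a_i, n_{\rho_i}) \in V_\circ$ for each $i$ individually versus only for the combinations indexed by $\fM$.

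The main obstacle I anticipate is handling the full adjoint rigid case cleanly. One must rule out not only torus-invariant perturbations of $E$ by $\pr_M^*\div\chi^\mu$ but also non-torus-invariant perturbations by $\pr_M^*\div f$ for non-monomial $f \in \cO(U)$; these give new effective representatives precisely when the torus-invariant part $\sum_i v_{D_i}(f)\, m_i$ vanishes for every $\bfm \in \Gamma_M$. Analysing the combinatorial structure of the support of $f$ to extract the separate individual containments $(a_i, n_{\rho_i}) \in V_\circ$ (rather than merely the combinations $(a_\bfm, \phi(\bfm)) \in V_\circ$ indexed by $\fM$) is where most of the work will lie, and this is precisely where the distinction between rigid and toric rigid becomes visible.
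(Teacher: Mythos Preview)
Your approach is genuinely different from the paper's, and the converse direction of your toric adjoint rigid argument has a real gap.

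The paper does not dualise into $\mathbb{Q}\oplus N_{\mathbb{Q}}$ at all. Instead it uses Corollary~\ref{corollary: optimal choice alpha 2}: every torus-invariant representative $D$ of $a((X,M),L)L$ with strictly positive coefficients arises as $\sum_j\alpha_j L(\sigma_j)$ for some optimal solution $\boldsymbol{\alpha}$ of the linear program $\mathscr{P}_M$, and the coefficient of $\tilde D_{\mathbf m}$ in the corresponding adjoint divisor is $l_{\mathbf m}(\boldsymbol{\alpha})-1$. Since the optimal solutions are exactly those $\boldsymbol{\alpha}$ with $l(\boldsymbol{\alpha})=1$ for every $l\in\mathscr{L}$ (and the remaining constraints slack), toric adjoint rigidity is literally the statement that each $l_{\mathbf m}$ is determined by the values of the forms in $\mathscr{L}$, i.e.\ lies in their span. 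Both implications fall out in one line each.

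Your converse (``all $l_{\mathbf m}\in\mathrm{span}(\mathscr{L})\Rightarrow$ toric adjoint rigid'') does not go through as written. You reduce toric adjoint rigidity to the condition that the cone $C'=\mathrm{cone}\{\phi(\mathbf m'):\mathbf m'\in\Gamma_{M^\circ}\}$ is a \emph{linear subspace} of $N_{\mathbb{R}}$ equal to $W_M$. But the hypothesis $(a_{\mathbf m},\phi(\mathbf m))\in V_\circ$ for all $\mathbf m$ is a purely linear containment: projecting to $N_{\mathbb{Q}}$ it only yields $W_M\subseteq W_\circ$, hence $W_M=W_\circ$. It says nothing about whether $C'$ equals its linear span $W_\circ$. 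The sentence ``reversing the duality argument gives back the cone condition'' is where the argument breaks: knowing that $W_\circ=N_{\mathbb{Q}}$ (or any fixed subspace) does not force the cone generated by the $\phi(\mathbf m')$ to be all of $W_\circ$. For instance, if $\Gamma_M=\Gamma_{M^\circ}$ then your hypothesis is vacuous, yet $C'$ can be a proper cone (take $\phi(\mathbf m')$ linearly independent); in that situation $0$ is not toric rigid. What actually rescues this in the paper's setting is the quasi-properness assumption together with Corollary~\ref{corollary: optimal choice alpha 2}, neither of which you invoke.

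A secondary issue: you assert that $\mathfrak{M}$ contains a positive multiple of each $\mathbf{e}_i$, but in this section $(X,M)$ is only assumed quasi-proper, not proper, so this need not hold. This does not affect what is actually needed for condition~(C1) (only $\mathbf m\in\Gamma_M$ matters when $S=1$), but it does mean your route to ``$n_{\rho_i}\in W$'' is blocked. For the adjoint rigid case you correctly identify that non-monomial perturbations are the new ingredient, but the sketch you give is not yet an argument; the paper simply notes that the same linear-program reasoning applies verbatim with the $l^{(i)}$ in place of the $l_{\mathbf m}$.
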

\begin{proof}
	We give the proof for the toric adjoint rigid case, and we note that the adjoint rigid case is proved analogously. By Corollary \ref{corollary: optimal choice alpha 2}, any representative $D=a_1D_1+\dots+a_nD_n$ of $a((X,M),L)L$ with $a_1,\dots,a_n>0$ can be written as $D=\sum_{j=1}^k \alpha_j L(\sigma_j)$ with $\alpha_1,\dots,\alpha_k>0$, for some solution $\bfalpha$ of the linear program $\mathscr{P}_M$.
	From this we obtain the expression $$\pr^*_M D+D_{(X,M)}=\sum_{\bfm\in \Gamma_M} (l_{\bfm}(\bfalpha)-1) \tilde{D}_{\bfm},$$ 
	which implies that $L$ is toric adjoint rigid with respect to $(X,M)$ if and only if for every $\bfm\in \fM$ the value $l_{\bfm}(\bfalpha)$ does not depend on the choice of a solution $\bfalpha$ to the linear program $\mathscr{P}_M$.
	
	If the linear form $l_{\bfm}$ lies in the linear span of $\mathscr{L}$ for every $\bfm\in \fM$, then the the values of the linear forms in $\mathscr{L}$ evaluated at $\bfalpha$ determine the value of $l_{\bfm}(\bfalpha)$ for all $\bfm\in \fM$.
	Since for every $l\in \mathscr{L}$ we have $l(\bfalpha)=1$ by definition, we therefore see that $L$ is toric adjoint rigid.
	
	Conversely, assume that $l_{\bfm'}$ is a linear form not in the span of $\mathscr{L}$ for some $\bfm'\in \fM$. Then there exists $\bfbeta\in \mathbb{R}^k$ such that $l_{\bfm'}(\bfbeta)=1$ but $l(\bfbeta)=0$ for all $l\in \mathscr{L}$. Let $D$ be a representative of $L$ satisfying Assumption \ref{assumption: divisor in interior}, and take $\bfalpha$ such that $D=\sum_{j=1}^k \alpha_j L(\sigma_j)$ and $\alpha_j>0$ for all $j=1,\dots,k$. For every $\bfm\in \Gamma_M$ such that $l_{\bfm}\not\in \mathscr{L}$ we have $l_{\bfm}(\bfalpha)>1$, by construction of $\mathscr{L}$. Therefore, there exists $\epsilon>0$ such that $l_{\bfm}(\bfalpha+\epsilon\bfbeta)\geq 1$ for all $\bfm\in \fM$ and $\bfalpha+\epsilon\bfbeta>0$. This implies that $\bfalpha+\epsilon\bfbeta\in \mathbb{R}_{>0}^{k}$ is also a solution to $\mathscr{P}_M$. Since $l_{\bfm'}(\bfalpha+\epsilon\bfbeta)\neq l_{\bfm'}(\bfalpha)$, this implies that $L$ is not toric adjoint rigid.
\end{proof}

Note that $F(\bfs)$ is always a function of the linear forms $l^{(1)}(\bfs),\dots, l^{(n)}(\bfs)$. Furthermore if $S=1$, then $F(\bfs)$ is a function of the linear forms $l_{\bfm}(\bfs)$ for $\bfm\in \Gamma_M$. Hence condition (C1) is satisfied by Proposition \ref{prop: characterisation adjoint rigid}, since either $L$ is adjoint rigid or $L$ is toric adjoint rigid and $S=1$. Thus we can apply Theorem \ref{theorem: Breteche 2} to determine the leading constant. By this theorem, the polynomial $Q$ giving the asymptotic satisfies $$Q(\log B)=2^{\dim X}C_0I(B)/B^{a((X,M),L)}+O((\log B)^{b(\mathbb{Q},(X,M),L)-2}),$$
where $$C_0=H(0,\dots,0)$$
and $I(B)$ is the volume of the domain
$$D(B)=\left\{\mathbf{x}\in [1,\infty)^{\Gamma_{M^\circ}}\middle| \prod_{\bfm\in \Gamma_{M^\circ}} x_\bfm^{l_{\bfm}(\mathbf{e}_j)}\leq B,\, \forall j=1,\dots, k\right\}.$$
Thus to prove Theorem \ref{theorem: full asymptotics}, it remains to compute $C_0$ and estimate $I(B)$ as $B\rightarrow\infty$. First we will compute $C_0$.
\begin{proposition} \label{prop: C0}
	The value of $H$ at the origin is equal to the infinite product
	\begin{align*}
		C_0=&\prod_{\substack{p \, \mathrm{prime}\\ p\mid S}}(1-p^{-1})^{\#\Gamma_{M^\circ}} \sum_{\bfm\in \fM_{\red}}p^{-a_{\bfm}}\\
		\times &\prod_{\substack{p \, \mathrm{prime}\\ p\nmid S}}(1-p^{-1})^{\#\Gamma_{M^\circ}} \sum_{\bfm\in \mathbb{N}^n_{\red}}p^{-a_{\bfm}}
	\end{align*}
	and this quantity is positive. Here we recall $a_{\bfm}=\sum_{i=1}^n m_i a_i$. Furthermore, if $L$ is adjoint rigid, then $\#\Gamma_{M^\circ}=\dim(X)+b(\mathbb{Q},(X,M),L)$.
\end{proposition}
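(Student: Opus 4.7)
The plan is to exploit the factorization of $F$ through the Riemann zeta function that was set up before the statement. By construction
\[
H(\bfs) = F(\bfs+\bfalpha)\prod_{\bfm\in \Gamma_{M^\circ}} l_{\bfm}(\bfs) = G(\bfs+\bfalpha)\prod_{\bfm\in \Gamma_{M^\circ}} l_{\bfm}(\bfs)\,\zeta\bigl(l_{\bfm}(\bfs+\bfalpha)\bigr),
\]
and since $l_{\bfm}(\bfalpha)=1$ for every $\bfm\in\Gamma_{M^\circ}$, the shifted argument equals $l_{\bfm}(\bfs)+1$. Using the Laurent expansion $\zeta(s+1)=s^{-1}+O(1)$ near $s=0$, each factor $l_{\bfm}(\bfs)\zeta(l_{\bfm}(\bfs)+1)$ extends holomorphically across $\bfs=0$ with value $1$ there, so evaluating at the origin gives $C_0=H(0,\dots,0)=G(\bfalpha)$.

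Next, I would read off the Euler product for $G$. Recall $G=\prod_p G_p$ with $G_p(\bfs)=F_p(\bfs)\prod_{\bfm\in\Gamma_{M^\circ}}\bigl(1-p^{-l_{\bfm}(\bfs)}\bigr)$. At $\bfs=\bfalpha$ the exponents $l_{\bfm}(\bfalpha)$ equal $1$ on $\Gamma_{M^\circ}$, and in general $l_{\bfm}(\bfalpha)=\sum_i m_i\,l^{(i)}(\bfalpha)=\sum_i m_i a_i=a_{\bfm}$, since by Corollary \ref{corollary: optimal choice alpha 2} the coefficient $a_i$ of $D_i$ in $D$ equals $l^{(i)}(\bfalpha)$. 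This yields
\[
G_p(\bfalpha)=(1-p^{-1})^{\#\Gamma_{M^\circ}}\!\!\sum_{\bfm\in\fM_{\red}} p^{-a_{\bfm}}
\quad(p\nmid S),\qquad
G_p(\bfalpha)=(1-p^{-1})^{\#\Gamma_{M^\circ}}\!\!\sum_{\bfm\in\mathbb{N}^n_{\red}} p^{-a_{\bfm}}
\quad(p\mid S),
\]
which is exactly the displayed product for $C_0$. Convergence and positivity are essentially free: each local factor is a sum of positive terms, hence strictly positive, and the estimate $G_p(\bfalpha)=1+O(p^{-1-\epsilon})$ already established in the discussion preceding Lemma \ref{lemma: weak form asymptotics} guarantees absolute convergence of the infinite product to a strictly positive real number.

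Finally, for the dimension count when $L$ is adjoint rigid I would combine two earlier results. By Proposition \ref{prop: characterisation adjoint rigid}, adjoint rigidity of $L$ forces each $l^{(i)}$ to lie in the span of $\mathscr{L}=\{l_{\bfm}:\bfm\in\Gamma_{M^\circ}\}$; since conversely $l_{\bfm}=\sum_i m_i l^{(i)}$, we get equality of spans and therefore $\rank(\mathscr{L})=\rank(l^{(1)},\dots,l^{(n)})=\dim(X)+1$ by Proposition \ref{prop: rank matrix}. Combining with Proposition \ref{prop: rank cokernel}, which gives $\#\Gamma_{M^\circ}-\rank(\mathscr{L})=b(\mathbb{Q},(X,M),L)-1$, yields $\#\Gamma_{M^\circ}=\dim(X)+b(\mathbb{Q},(X,M),L)$. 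The only non-routine step in this whole proposition is identifying $H(0)$ with $G(\bfalpha)$, and once the zeta-factor cancellation is performed carefully the remaining claims are bookkeeping.
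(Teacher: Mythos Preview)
Your argument is correct. The identification $C_0=G(\bfalpha)$ via $\lim_{z\to 0} z\zeta(z+1)=1$, the evaluation of the local Euler factors at $\bfalpha$ using $l_{\bfm}(\bfalpha)=a_{\bfm}$, and the convergence/positivity of the product are all handled exactly as in the paper.

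The one genuine difference is your derivation of $\#\Gamma_{M^\circ}=\dim(X)+b(\mathbb{Q},(X,M),L)$ in the adjoint rigid case. You argue purely linear-algebraically: Proposition~\ref{prop: characterisation adjoint rigid} gives $\operatorname{span}\mathscr{L}=\operatorname{span}(l^{(1)},\dots,l^{(n)})$, so $\rank(\mathscr{L})=\dim(X)+1$ by Proposition~\ref{prop: rank matrix}, and then Proposition~\ref{prop: rank cokernel} finishes. The paper instead argues geometrically: it shows by contradiction that adjoint rigidity forces the map $N^\vee\to\Div_T(X,M^\circ)$ to be injective (otherwise one could build a nontrivial effective divisor linearly equivalent to $0$), invokes the exact sequence of Proposition~\ref{prop: Picard group toric pair} to conclude $\#\Gamma_{M^\circ}=\dim(X)+\rank\Pic(X,M^\circ)$, and then quotes the identification $\rank\Pic(X,M^\circ)=b(\mathbb{Q},(X,M),L)$ from the proof of Proposition~\ref{prop: rank cokernel}. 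Your route is shorter and stays entirely within the linear algebra of the forms $l_{\bfm}$ already set up; the paper's route makes the underlying reason (injectivity of the character-to-divisor map) more visible and ties it directly to the Picard group presentation. Both are clean, and either would be acceptable here.
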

\begin{proof}
	Note that
	$$H(\bfs)=G(\bfs+\bfalpha)\prod_{\bfm\in\Gamma_{M^\circ}}l_{\bfm}(\bfs)\zeta(l_{\bfm}(\bfs)+1),$$
	and $$G_p(\bfs)=\left(\sum_{\bfm\in \fM}p^{-l_{\bfm}(\bfs)}\right)\prod_{\bfm\in \Gamma_{M^\circ}}(1-p^{-l_{\bfm}(\bfs)})$$
	for any prime number $p$ not dividing $S$ and similarly
	$$G_p(\bfs)=\left(\sum_{\bfm\in \mathbb{N}^n_{\red}}p^{-l_{\bfm}(\bfs)}\right)\prod_{\bfm\in \Gamma_{M^\circ}}(1-p^{-l_{\bfm}(\bfs)})$$
	for any prime number $p$ dividing $S$.
	
	Thus the limit $\lim_{z\rightarrow 0} z\zeta(z+1)=1$ implies $H(0,\dots,0)=G(\bfalpha)$. This gives the desired identity for $C_0$ under the assumption that the product converges.
	Each term $C_p$ in the product is positive and $C_p=1+O(p^{-c})$, where the implied constant is independent of $p$ and $c$ is the smallest between $2$ and the minimum of all values $l_{\bfm}(\bfalpha)= a_{\bfm}$ for $\bfm\in \fM\setminus \fM^\circ$, so the product thus converges to a positive constant.
	
	Now assume that $L$ is adjoint rigid with respect to $(X,M)$. We claim that this implies that the map $N^\vee\rightarrow \Div_T(X,M)$ is injective. If it were not injective, then there exists $\mu\in N^\vee$ such that $\pr^*_M\div(\chi^\mu)=0$. But then there exists $c\in \mathbb{Q}$ such that $\pr^*_M\div(c+\chi^\mu)$ is a nontrivial effective divisor on $(X,M)$. This implies that $0\in \Div(X,M)$ is not a rigid divisor, which contradicts the fact that $L$ is adjoint rigid with respect to $(X,M)$. Thus Proposition \ref{prop: Picard group toric pair} implies $\#\Gamma_{M^\circ}=\dim(X)+\rank \Pic(X,M^\circ)$. Furthermore, the proof of Proposition \ref{prop: rank cokernel} shows $\rank \Pic(X,M^\circ)=b(\mathbb{Q},(X,M),L)$, so we obtain the desired expression for $\#\Gamma_{M^\circ}$. 
\end{proof}

Now it remains to estimate the volume of the set $D(B)$.
In order to simplify notation, we assume $M=M^\circ$, which we can do without loss of generality as the definition of $D(B)$ only depends on $(X,M^\circ)$ and not on $(X,M)$.
The set $D(B)$ is a generalization of the set $D(B)$ defined by Salberger \cite[Notation 11.28]{Sal98} in his study of rational points on split toric varieties, and we will use the same approach he used to estimate its volume.

We regard $D(B)$ as a closed subset of the real locus of the universal torsor of $(X,M)$, where the universal torsor is as in Definition \ref{def: universal torsor pair}.
As we assumed $M=M^\circ$ and $a((X,M),L)=1$, we have $\pr^*_{M} D=-D_{(X,M)}$.
In order to estimate the volume of $D(B)$, Salberger splits it up as $D(B)=\cup_{\sigma\in \Sigma_{\max}} D(B,\sigma)$ using what he calls the toric canonical splitting \cite[Notation 11.31]{Sal98}. 
We will similarly split up $D(B)$, but we will use the splitting induced by the fan $\Sigma_{\overline{M}}$ as given in Notation \ref{notation: fan toric pair}, rather than $\Sigma$. The fan $\Sigma_{\overline{M}}$ has the property that for all $\bfm\in \Gamma_M$, the ray spanned by an element $\phi(\bfm)\in N$ lies in $\Sigma_{\overline{M}}$, and all rays in $\Sigma_{\overline{M}}$ are of this form. This property will aid in computing $D(B)$.
Since the dense torus in $X$ is $U=\Hom(N^\vee,\mathbb{G}_m)$, the real locus of the torus is $U(\mathbb{R})=\Hom(N^\vee,\mathbb{R}^\times)$. By composing with the logarithm of the absolute value $\mathbb{R}^\times\rightarrow \mathbb{R}\colon x\mapsto \log |x|$, we obtain a homomorphism $$U(\mathbb{R})\rightarrow \Hom(N^\vee,\mathbb{R}^\times)= N_{\mathbb{R}}.$$
Recall that $U_M$ is the dense torus in the universal torsor $Y_M$ of $(X,M)$. The morphism $Y_M\rightarrow X$ induces a homomorphism $U_M(\mathbb{R})\rightarrow U(\mathbb{R})$. By composing these homomorphism with each other, we obtain a homomorphism
$$\operatorname{Log}_M\colon U_M(\mathbb{R})\rightarrow \Hom(N^\vee,\mathbb{R}^\times)= N_{\mathbb{R}}.$$
For $\sigma\in \Sigma_{\overline{M},\max}$, write $C_{M,\sigma,0}(\mathbb{R})$ for the inverse image of $-\sigma$ under the map $\operatorname{Log}_M$ and write $D(B,\sigma)=D(B)\cap C_{M,\sigma,0}(\mathbb{R})$. Since for any two distinct maximal cones $\sigma,\sigma'$, their intersection $\sigma\cap \sigma'$ lies in a proper subspace of $N_{\mathbb{R}}$, the intersection of $D(B,\sigma)\cap D(B,\sigma')$ has Lebesgue measure zero and thus
\begin{equation} \label{eq: subdivide integral D(B) over cones}
	I(B)=\int_{D(B)}\d \bfx=\sum_{\sigma\in \Sigma_{\overline{M},\max}}\int_{D(B,\sigma)}\d \bfx,
\end{equation}
where the measure is the standard Lebesgue measure on $\mathbb{R}^{\Gamma_M}$.
We will compute $\int_{D(B,\sigma)}\d \bfx$ for each maximal cone $\sigma\in \Sigma_{\overline{M}}$.

As in \cite[Proposition 11.22]{Sal98}, we can describe when a point lies in $C_{M,\sigma,0}(\mathbb{R})$. The cone $\sigma$ contains exactly $d=\dim X$ rays. Let $d_1$ be the number of rays in $\sigma$ which lie in $\Sigma_M$. Let $r=\#\Sigma_{M(\sigma)}(1)-d$ be the number of raysin $\Sigma_{M(\sigma)}$ which lie outside of $\sigma$. Order the rays $\rho_1,\dots, \rho_{r+d}$ in $\Sigma_{M(\sigma)}$ such that $\rho_{r+1},\dots,\rho_{r+d_1}$ are the rays in both $\sigma$ and $\Sigma_M$ and $\rho_{r+1},\dots,\rho_{r+d}$ are the rays in $\sigma$. Let $\bfm_{1},\dots,\bfm_{r+d}\in \Gamma_{M(\sigma)}$ be the elements corresponding to the rays $\rho_{1},\dots \rho_{r+d}$, and set $n^{(i)}=\phi(\bfm_{r+i})$ for $i=1,\dots, d$. As $n^{(1)},\dots, n^{(d)}$ are integer multiples of the ray generators of $\rho_{r+1},\dots,\rho_{r+d}$, they freely generate a finite-index sublattice $N_{\sigma}$ of $N$. Let $(\mu^{(1)},\dots, \mu^{(d)})$ be the corresponding dual $\mathbb{Z}$-basis of $N_{\sigma}^\vee\supset N^\vee$ and set

$$D(i)=\sum_{\bfm\in\Gamma_{M}} \langle \mu^{(i)}, \phi(\bfm) \rangle \tilde{D}_{\bfm}\in \Div_T(X,M)_{\mathbb{Q}},$$
where $\tilde{D}_\rho$ is the prime divisor on $(X,M)$ corresponding to the ray $\rho$.
Note that
$$D(i)=\pr^*_M \sum_{\rho\in \Sigma(1)} \langle \mu^{(i)}, n_{\rho} \rangle D_{\rho}$$
by construction, so $D(i)$ is $\mathbb{Q}$-linearly equivalent to $0$. Furthermore, since $\mu^{(1)},\dots, \mu^{(d)}$ is a basis for $N^\vee_{\mathbb{Q}}$, $(D(1),\dots,D(n))$ is a basis for the vector space of all torus-invariant $\mathbb{Q}$-divisors on $(X,M)$ linearly equivalent to $0$.
\begin{proposition}\label{prop: subdivision D(i)}
	Let $\bfx\in [1,\infty)^{\Gamma_M}$. Then $\bfx\in C_{M,\sigma,0}(\mathbb{R})$ if and only if $\bfx^{D(i)}\leq 1$ for all $i=1,\dots, d$.
\end{proposition}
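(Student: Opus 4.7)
The plan is to compute $\operatorname{Log}_M(\bfx)$ explicitly in coordinates and then translate membership in $-\sigma$ into the stated inequalities via the dual basis $\mu^{(1)},\dots,\mu^{(d)}$.

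First, I would make the map $\operatorname{Log}_M$ explicit. The morphism $Y_M\to X$ restricts on tori to $U_M=\mathbb{G}_m^{\Gamma_M}\to U$, and on real points this sends $(x_\bfm)_{\bfm\in\Gamma_M}$ to the homomorphism $\mu\mapsto\prod_{i=1}^n y_i^{\langle\mu,n_{\rho_i}\rangle}$ where $y_i=\prod_{\bfm\in\Gamma_M}x_\bfm^{m_i}$. Taking the logarithm of the absolute value and using $\phi(\bfm)=\sum_i m_i n_{\rho_i}$, I obtain
\[
\operatorname{Log}_M(\bfx)=\sum_{\bfm\in\Gamma_M}\log|x_\bfm|\,\phi(\bfm)\in N_{\mathbb{R}}.
\]

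Next, I would translate membership in $-\sigma$. Since $n^{(1)},\dots,n^{(d)}$ generate the lattice $N_\sigma$ of finite index in $N$, and since $\sigma=\mathbb{R}_{\geq 0}n^{(1)}+\dots+\mathbb{R}_{\geq 0}n^{(d)}$, a vector $p\in N_{\mathbb{R}}$ lies in $-\sigma$ if and only if $\langle\mu^{(i)},p\rangle\leq 0$ for $i=1,\dots,d$, where $\mu^{(1)},\dots,\mu^{(d)}$ is the $\mathbb{Z}$-basis of $N_\sigma^\vee$ dual to $n^{(1)},\dots,n^{(d)}$. Applying this to $p=\operatorname{Log}_M(\bfx)$ gives
\[
\langle\mu^{(i)},\operatorname{Log}_M(\bfx)\rangle=\sum_{\bfm\in\Gamma_M}\langle\mu^{(i)},\phi(\bfm)\rangle\log|x_\bfm|.
\]
For $\bfx\in[1,\infty)^{\Gamma_M}$ we have $|x_\bfm|=x_\bfm$ and the right-hand side equals $\log\bfx^{D(i)}$ by the definition $D(i)=\sum_{\bfm\in\Gamma_M}\langle\mu^{(i)},\phi(\bfm)\rangle\tilde{D}_\bfm$. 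Thus the condition $\langle\mu^{(i)},\operatorname{Log}_M(\bfx)\rangle\leq 0$ is equivalent to $\bfx^{D(i)}\leq 1$, finishing the proof.

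There is no real obstacle here beyond unwinding the notation. The only subtlety worth double-checking is that $n^{(i)}=\phi(\bfm_{r+i})$ is a positive integer multiple of the primitive generator of $\rho_{r+i}$ (which is true by the definition of $\rho_\bfm$ in Notation~\ref{notation: fan toric pair}), so that the cone $\sigma$ really is the nonnegative $\mathbb{R}$-span of the $n^{(i)}$ and the dual-basis characterization of $-\sigma$ applies without modification.
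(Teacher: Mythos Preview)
Your proof is correct and is precisely the natural argument one would expect: unwind $\operatorname{Log}_M$ to see that its value at $\bfx$ is $\sum_{\bfm}\log x_\bfm\,\phi(\bfm)$, then use that $-\sigma$ is cut out by the inequalities $\langle\mu^{(i)},\cdot\rangle\le 0$ for the dual basis. The paper gives no argument of its own here, simply citing Salberger's \cite[Proposition~11.22]{Sal98}; your write-up is essentially the adaptation of that proof to the pair setting, so the approaches agree.
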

\begin{proof}
	The proof is identical to the proof of \cite[Proposition 11.22]{Sal98}.
\end{proof}

For $i\in \{1,\dots, r+d_1\}$, we write $\tilde{D}_i:=\tilde{D}_{\bfm_i}\in \Div(X,M)$ for the divisor corresponding to the ray $\rho_i\subset \Sigma_M$.

Set $$E(i):=\tilde{D}_{r+i}-D(i) \text{ if } \rho_{r+i}\in \Sigma_{M}(1)$$ and
$$E(i):=-D(i) \text{ if } \rho_{r+i}\in \Sigma_{\overline{M}}(1)\setminus \Sigma_{M}(1).$$
By construction, the divisor $E(i)$ is supported on the divisors $\tilde{D}_{\bfm}\in \Div(X,M)$ such that $\phi(\bfm)\not\in\sigma$ and it is $\mathbb{Q}$-linearly equivalent to $\tilde{D}_{r+i}$ if $\rho_{r+i}\in \Sigma_M(1)$ and otherwise it is $\mathbb{Q}$-linearly equivalent to $0$.


\begin{notation}
	Note that for a maximal cone $\sigma \in \Sigma_{\overline{M}}$ and a $\mathbb{Q}$-divisor class $L'\in \Pic(X,\overline{M})_{\mathbb{Q}}$, there is a unique representative $L'(\sigma)\in \Div_T(X,M)_{\mathbb{Q}}$ of $L'$ supported only on the divisors $\tilde{D}_{\bfm}\in \Div(X,\overline{M})$ with $\phi(\bfm)\not\in \sigma$.
	For a maximal cone $\sigma\in \Sigma_{\overline{M}}$, we write $(\pr^*_M L)(\sigma)$ for the restriction of $(\pr^*_{\overline{M}} L)(\sigma)$ to $(X,M)$. Similarly, we write $D_{(X,M)}(\sigma)\in \Div(X,M)_{\mathbb{Q}}$ by viewing $D'=D_{(X,M)}$ as a divisor on $(X,\overline{M})$ and by restricting $D'(\sigma)$ to $(X,M)$.
\end{notation}
In particular, for a maximal cone $\sigma\in \Sigma_{\overline{M}}$ and $L\in \Pic(X)_{\mathbb{Q}}$,
$$(\pr_M^* L)(\sigma)=\pr^*_M (L(\overline{\sigma})),$$
where $\overline{\sigma}$ is the unique maximal cone in $\Sigma$ containing $\sigma$.
\begin{lemma}
	$D(B,\sigma)$ is the set of all $(x_1,\dots,x_{r+d_1})\in X_{M,0}(\mathbb{R})\subset \mathbb{R}^{r+d_1}$ satisfying
	\begin{enumerate}
		\item $\min(x_1,\dots, x_{r+d_1})\geq 1$,
		\item $\bfx^{(\pr_M^*L)(\sigma)}\leq B$, 
		\item $\bfx^{E(i)}\geq x_{r+i}, \text{ for all }i=1,\dots,d_1$, and $\bfx^{E(i)}\geq 1, \text{ for all }i=d_1+1,\dots,d$.
	\end{enumerate}
\end{lemma}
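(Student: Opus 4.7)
The plan is to verify each of the three conditions separately, reducing two of them immediately to earlier results and devoting the main argument to condition~(2).

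Condition~(1) holds because $D(B,\sigma)\subseteq D(B)\subseteq [1,\infty)^{\Gamma_M}$ and $\#\Gamma_M = r+d_1$: the first $r$ coordinates are indexed by the rays of $\Sigma_M$ outside $\sigma$, and the last $d_1$ are indexed by the rays of $\Sigma_M$ inside $\sigma$. For condition~(3), I would apply Proposition~\ref{prop: subdivision D(i)} directly, which rephrases membership in $C_{M,\sigma,0}(\mathbb{R})$ as the system $\bfx^{D(i)}\leq 1$ for $i=1,\dots,d$. When $i\leq d_1$ the definition $E(i)=\tilde{D}_{r+i}-D(i)$ gives $\bfx^{E(i)}=x_{r+i}/\bfx^{D(i)}$, so $\bfx^{D(i)}\leq 1$ is equivalent to $\bfx^{E(i)}\geq x_{r+i}$; when $i>d_1$ the definition $E(i)=-D(i)$ gives $\bfx^{E(i)}=\bfx^{-D(i)}$, so $\bfx^{D(i)}\leq 1$ is equivalent to $\bfx^{E(i)}\geq 1$.

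The heart of the argument is condition~(2). Let $\overline{\sigma}$ be the unique maximal cone of $\Sigma$ containing $\sigma$, so by the identity recorded before the lemma, $(\pr_M^* L)(\sigma)=\pr_M^* L(\overline{\sigma})$. The height condition defining $D(B)$ demands $\bfx^{\pr_M^* L(\sigma_j)}\leq B$ for every maximal cone $\sigma_j\in\Sigma$, and I would show that on $C_{M,\sigma,0}(\mathbb{R})$ this entire system is equivalent to the single inequality $\bfx^{\pr_M^* L(\overline{\sigma})}\leq B$. Since $L(\sigma_j)$ and $L(\overline{\sigma})$ are linearly equivalent representatives of $L$, their difference is the principal divisor of $\chi^{\mu_L(\overline{\sigma})-\mu_L(\sigma_j)}$. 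Expanding $\mu_L(\overline{\sigma})-\mu_L(\sigma_j)=\sum_{i=1}^{d} c_{ij}\mu^{(i)}$ in the basis dual to $(n^{(i)})$ and pulling back via $\pr_M^*$ yields
\[
\pr_M^* L(\sigma_j)-\pr_M^* L(\overline{\sigma})=\sum_{i=1}^{d} c_{ij}\,D(i),\qquad c_{ij}=\langle \mu_L(\overline{\sigma})-\mu_L(\sigma_j),n^{(i)}\rangle.
\]
The convexity of the toric support function $\psi_L\colon N_{\mathbb{R}}\to \mathbb{R}$ associated with the nef divisor $L$ (namely $\psi_L|_{\sigma_j}=\mu_L(\sigma_j)$, which satisfies $\langle \mu_L(\tau),n\rangle\geq \langle \mu_L(\sigma_j),n\rangle$ for $n\in\tau$) applied to $n=n^{(i)}\in\sigma\subseteq\overline{\sigma}$ gives $c_{ij}\geq 0$. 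Combined with the inequalities $\bfx^{D(i)}\leq 1$ on $C_{M,\sigma,0}(\mathbb{R})$, this forces $\bfx^{\pr_M^* L(\sigma_j)}\leq \bfx^{\pr_M^* L(\overline{\sigma})}$ for every $j$. Taking $j$ to be the index of $\overline{\sigma}$ gives the reverse estimate at that cone, so the full system $\bfx^{\pr_M^* L(\sigma_j)}\leq B$ is equivalent to condition~(2).

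The main obstacle is the non-negativity $c_{ij}\geq 0$, which rests on the standard convexity of the toric support function of a nef divisor; pinning down sign conventions carefully in the definition of $\mu_L(\sigma_j)$ is the only subtle point, after which every other step is a direct unpacking of definitions.
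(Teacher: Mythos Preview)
Your proof is correct and follows essentially the same route as the paper's: both handle conditions (1) and (3) via Proposition~\ref{prop: subdivision D(i)} and the definition of $E(i)$, and both reduce condition (2) to showing that $\pr_M^*L(\sigma_j)-\pr_M^*L(\overline{\sigma})=\sum_i c_{ij}D(i)$ with $c_{ij}\geq 0$, then invoke $\bfx^{D(i)}\leq 1$ on $C_{M,\sigma,0}(\mathbb{R})$. The only cosmetic difference is how the nonnegativity $c_{ij}\geq 0$ is justified: the paper reads off $c_i$ as the coefficient of $\tilde{D}_{r+i}$ in $\pr_M^*L(\overline{\sigma'})$ and appeals to the earlier proposition that these coefficients are nonnegative for nef $L$, whereas you phrase this as convexity of the support function --- the two statements are equivalent under the paper's sign convention (indeed $l^{(i)}(\bfe_j)\geq 0$ for nef $L$ unpacks to exactly the inequality $\langle \mu_L(\overline{\sigma}),n\rangle\geq\langle \mu_L(\sigma_j),n\rangle$ for $n\in\overline{\sigma}$ that you use).
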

\begin{proof}
	By Proposition \ref{prop: subdivision D(i)}, $\bfx\in [1,\infty)^{\Gamma_M}$ lies in $\bfx\in C_{M,\sigma,0}(\mathbb{R})$ if and only if the first and third conditions are satisfied. Since $\bfx^{(\pr^*_M L)(\sigma)}=\prod_{\bfm\in \Gamma_M} x_{\bfm}^{l_{\bfm}(\bfe_j)}$ for the unique maximal cone $\sigma_j\in \Sigma$ containing $\sigma$, it remains to show that $\bfx^{(\pr_M^*L)(\sigma)}\leq B$ is equivalent to $\bfx^{(\pr_M^*L)(\sigma')}\leq B$ for all $\sigma'\in \Sigma_{\overline{M}}$. Because the divisors $D(1),\dots, D(d)$ generate the kernel of $\Div_T(X,M)_{\mathbb{Q}}\rightarrow \Pic(X,M)_{\mathbb{Q}}$, we must have $(\pr_M^* L)(\sigma')=(\pr_M^*L)(\sigma)+\sum_{i=1}^d c_iD(i)$, for $c_1,\dots,c_d\in \mathbb{Q}$. Let $\overline{\sigma}$ and $\overline{\sigma'}$ be the unique maximal cones in $\Sigma$ containing $\sigma$ and $\sigma'$, respectively. By considering the pullbacks of $L(\overline{\sigma})$ and $L(\overline{\sigma'})$ to $(X,M)$, $c_i$ is equal to the coefficient of $\tilde{D}_{r+i}\in \Div(X,M)$ in the pullback of $L(\overline{\sigma'})-L(\overline{\sigma})$ to $(X,M)$, for all $i\in \{1,\dots,d\}$. As the coefficient of $\tilde{D}_{r+i}\in \Div(X,M)$ in $\pr^*_{M}L(\overline{\sigma})$ is zero, and $L$ is nef, me must have $c_i\geq 0$ for all $i\in\{1,\dots, d\}$. Therefore Proposition \ref{prop: subdivision D(i)} implies $$\bfx^{(\pr_M^* L)(\sigma')}\leq \bfx^{(\pr_M^* L)(\sigma)},$$
	as desired.
\end{proof}

Similarly, we define $\Omega(B,\sigma)$ to be the set of all $(x_1,\dots,x_r)$ satisfying
\begin{enumerate}
	\item $\min(x_1,\dots, x_{r})\geq 1$,
	\item $\bfx^{(\pr_M^*L)(\sigma)}\leq B$, 
	\item $\bfx^{E(i)}\geq 1, \text{ for all }i=1,\dots,d$.
\end{enumerate}

By Fubini's theorem, we can compute $D(B,\sigma)$ by first integrating with respect to $(x_{r+1},\dots,x_{r+d_1})$ and then with respect to $(x_1,\dots,x_r)$:
\begin{equation} \label{eq: D(B) and E(j)}
	\int_{D(B,\sigma)} \d \bfx= \int_{\Omega(B,\sigma)}\prod_{i=1}^{d_1} (\bfx^{E(i)}-1)\d x_1\dots \d x_r
\end{equation}
This equality combined with
\begin{equation} \label{eq: sum E(j)}
	\sum_{i=1}^{d_1} E(i)=-D_{(X,M)}(\sigma)-\sum_{i=1}^r\tilde{D}_i
\end{equation}
implies
\begin{equation} \label{eq: D(B) and E(j) 2}
	\int_{D(B,\sigma)} \d \bfx= \int_{\Omega(B,\sigma)} \bfx^{-D_{(X,M)}(\sigma)}\prod_{i=1}^{d_1} (1-\bfx^{-E(i)})\frac{\d x_1}{x_1}\dots \frac{\d x_r}{x_r}.
\end{equation}
Let $T_{M(\sigma)}\subset Y_{M(\sigma)}$ be the Picard torus for the pair $(X,M(\sigma))$ as in Definition \ref{def: universal torsor pair}. The projection $Y_{M(\sigma)}\subset \mathbb{A}_{\mathbb{Q}}^{r+d}\rightarrow \mathbb{A}_{\mathbb{Q}}^r$ onto the first $r$ coordinates induces an analytic homomorphism
$$T_{M(\sigma)}(\mathbb{R})\rightarrow (\mathbb{R}^\times)^r$$
$$(x_1,\dots,x_{r+d})\mapsto (x_1,\dots,x_r)$$
of Lie groups. The identity component $T_{M(\sigma)}(\mathbb{R})^+$ of $T_{M(\sigma)}(\mathbb{R})\subset \mathbb{R}^n$ is the set of points with positive coordinates, and the analytic homomorphism restricts to an analytic isomorphism $T_{M(\sigma)}(\mathbb{R})^+\rightarrow \mathbb{R}_{>0}^r$.
For $i\in \{1,\dots, d_1\}$, the image of $E(i)$ in $\Div_T(X,M(\sigma))$ is $\mathbb{Q}$-linearly equivalent to $\tilde{D}_{r+i}$, so $\bfx^{E(i)}=x_{r+i}$ for all $\bfx\in T_{M(\sigma)}(\mathbb{R})^+$. Furthermore, since $-D_{(X,M)}(\sigma)$ is $\mathbb{Q}$-linearly equivalent to $-D_{(X,M)}$, viewed as $\mathbb{Q}$-divisors on $(X,M(\sigma))$, the isomorphism identifies the set $\Omega(B,\sigma)$ with the subset $F(B)\subset T_{M(\sigma)}(\mathbb{R})$ given by the elements $(x_1,\dots,x_{r+d})$ with
\begin{enumerate}
	\item $\min(x_1,\dots, x_{r})\geq 1$,
	\item $\bfx^{(\pr_M^* L)(\sigma)}\leq B$.
\end{enumerate}

Under the isomorphism $T_{M(\sigma)}(\mathbb{R})^+\rightarrow \mathbb{R}_{>0}^r$, the differential form $\frac{\d x_1}{x_1}\dots \frac{\d x_r}{x_r}$ on $(\mathbb{R}^\times)^r$ corresponds to the torus-invariant differential form $\frac{\d x_1}{x_1}\dots \frac{\d x_r}{x_r}$ on $T_{M(\sigma)}(\mathbb{R})^+$.

Consequently, we find the following analogue of \cite[Equation (11.37)]{Sal98}
\begin{equation} \label{eq: full integral F(B)}
	\int_{D(B,\sigma)} \d \bfx=\int_{F(B)} \bfx^{-D_{(X,M)}}\prod_{i=1}^{d_1}\left(1-1/x_{r+i}\right)\frac{\d x_1}{x_1}\dots \frac{\d x_r}{x_r}.
\end{equation}

We will now first focus on estimating
\begin{equation} \label{eq: I(B,sigma) in terms of F(B)}
	I(B,\sigma)=\int_{F(B)} \bfx^{-D_{(X,M)}}\frac{\d x_1}{x_1}\dots \frac{\d x_r}{x_r},
\end{equation}
and then we show in Lemma \ref{lemma: reduction to I(B,sigma)} that $\int_{D(B,\sigma)} \d \bfx\sim I(B,\sigma)$ as $B\rightarrow \infty$.

There is an analytic isomorphism $$\psi\colon T_{M(\sigma)}(\mathbb{R})^+\rightarrow V:=\Hom(\Pic(X,M(\sigma)),\mathbb{R})$$
given by $(x_1,\dots,x_{r+d})\mapsto (y_1,\dots, y_{r+d})$, where $y_i=\log x_i$ for $i\in \{1,\dots, r+d\}$.

Let $\Hom_{\geq 0}(\Pic(X,M(\sigma)),\mathbb{R})\subset V$ be set of linear functions which are nonnegative on effective divisor classes on $(X,M(\sigma))$. Then the isomorphism $\psi$ sends the set $T_{M(\sigma)}^{\geq 1}(\mathbb{R})$ consisting of all $(x_1,\dots,x_{r+d})\in T_{M(\sigma)}(\mathbb{R})^+$ with $x_1,\dots, x_{r+d}\geq 1$ to $\Hom_{\geq 0}(\Pic(X,M(\sigma)),\mathbb{R})$.

Let $b=\log B$. The image $E_b:=\psi(F(B))$ is the set of all $\varphi\in \Hom_{\geq 0}(\Pic(X,M(\sigma)),\mathbb{R})$ with $\varphi(\pr_M^* L)\leq b$. Let $\nu$ be the Haar measure on $V$ such that the volume of $V/\Lambda$ is $1$ for the lattice $\Lambda:=\Hom(\Pic(X,M(\sigma)),\mathbb{Z})$.
Under the analytic isomorphism $\psi$, the differential form $\frac{\d x_1}{x_1}\dots \frac{\d x_r}{x_r}$ gets sent to $\d y_1 \dots \d y_r$. Recall that $I(\sigma)$ is the index of $\langle \tilde{D}_1,\dots,\tilde{D}_r \rangle$ inside $\Pic(X,M(\sigma))$. As $\langle \tilde{D}_1,\dots,\tilde{D}_r \rangle$ is torsion-free, it has index $\frac{I(\sigma)}{\#\Pic(X,M(\sigma))_{\mathrm{torsion}}}$ in $\Pic(X,M(\sigma))/\{\mathrm{torsion}\}$. Thus the lattice $\Lambda$ has index $\frac{I(\sigma)}{\#\Pic(X,M(\sigma))_{\mathrm{torsion}}}$ in $\langle [\tilde{D}_1]^*,\dots,[\tilde{D}_r]^* \rangle$.
This implies
$$\d y_1 \dots \d y_r=\frac{I(\sigma)}{\#\Pic(X,M(\sigma))_{\mathrm{torsion}}}\d \nu,$$ and thus Equation \eqref{eq: I(B,sigma) in terms of F(B)} becomes
\begin{equation}
	I(B,\sigma)=\frac{I(\sigma)}{\#\Pic(X,M(\sigma))_{\mathrm{torsion}}} \int_{E_b} \exp(y_1+\dots+y_{r+d_1})\d \nu.
\end{equation}

We write $\tilde{a}_i$ for the coefficient of $\tilde{D}_i$ in $\pr^*_{M(\sigma)} L$.
Note that $\pr^*_{M(\sigma)} L+D_{(X,M)}=\sum_{i=r+d_1+1}^{r+d} \tilde{a}_i\tilde{D}_i$, and $\tilde{a}_i=1$ if $i\leq r+d_1$. 

Let $V''\subset \mathbb{R}^{d-d_1}$ be the vector space of linear functions $\langle [\tilde{D}_{r+d_1+1}],\dots, [\tilde{D}_{r+d}]\rangle\rightarrow \mathbb{R}$.
The projection $V\rightarrow V''$ given by $(y_1,\dots, y_{r+d})\mapsto (y_{r+d_1+1},\dots, y_{r+d})$ implies the existence of a splitting $V\cong V'\times V''$, where $V'=\{(y_1,\dots, y_{r+d})\in V\mid y_{r+d_1+1},\dots, y_{r+d}=0\}$. The space $V'$ is naturally identified with $\Hom(\Pic(X,M),\mathbb{R})$, and the Haar measure $\nu'$ on $V'$ induced by $\nu$ on $V$ is the measure such that the volume of $V'/\Lambda'$ is $1$, where $\Lambda':=\Hom(\Pic(X,M),\mathbb{Z})$. Similarly $\nu$ induces the Haar measure $\nu''$ on $V''$ such that $V''/\Lambda''$ has volume equal to $1$, where $\Lambda''$ is the image of $\Lambda$ in $V''$. Under the isomorphism $V\cong V'\times V''$, the measure $\nu$ corresponds to the product measure $\nu'\times \nu''$, so Fubini's theorem implies that 

\begin{align*}
	I(B,\sigma)&=\frac{I(\sigma)}{\#\Pic(X,M(\sigma))_{\mathrm{torsion}}}\\ &\quad \times \int_{E_b\cap V'} \exp(y_1+\dots+y_{r+d_1}) \Volume(Z_\sigma(b-y_1-\dots-y_{r+d_1})) \d \nu' \\
	&= \frac{I(\sigma)\Volume(Z_\sigma(1))}{\#\Pic(X,M(\sigma))_{\mathrm{torsion}}} \\ &\quad \times \int_{E_b\cap V'} \exp(y_1+\dots+y_{r+d_1}) (b-y_1-\dots-y_{r+d_1})^{\dim (Z_\sigma(1))} \d \nu',
\end{align*}
where $Z_\sigma(c)=\{(y_{r+d_1+1},\dots,y_{r+d})\in V''\cap [0,\infty)^{d-d_1}\mid \sum_{i=r+d_1+1}^{r+d} \tilde{a}_i y_i\leq c\}$ is a polytope and the volume is with respect to the measure $\nu''$. Note that the polytope $Z_\sigma(1)$ is the polytope $Z_\sigma$ in Theorem \ref{theorem: full asymptotics}.

Under the identification of $V'$ with $\Hom(\Pic(X,M),\mathbb{R})$, the subset $E_b\cap V'$ is the set of linear forms $\varphi$ which are nonnegative on effective divisors and such that $\phi(-K_{(X,M)})\leq b$. Let $\lambda\colon V'\rightarrow \mathbb{R}$ be the linear form
$$\lambda(y_1,\dots,y_{r+d_1})=y_1+\dots+y_{r+d_1}$$ obtained by evaluating at the anticanonical class $-K_{(X,M)}$ of $(X,M)$. As the volume of the fibre above any $y\in [0,\infty)$ is equal to $\#\Pic(X,M)_{\mathrm{torsion}}\alpha_{\mathrm{Peyre}}((X,M),L)y^{b(\mathbb{Q},(X,M),L)-1}$, where $\alpha_{\mathrm{Peyre}}((X,M),L)$ is the variant of the $\alpha$-constant as given in \cite[Remark 8.6]{Moe25conjecture}, integrating along the fibres of $\lambda$ gives
\begin{align*}
	\int_{E_b\cap V'} &\exp(y_1+\dots+y_{r+d_1}) (b-y_1-\dots-y_{r+d_1})^{d-d_1} \d \nu'\\&=\#\Pic(X,M)_{\mathrm{torsion}}\alpha_{\mathrm{Peyre}}((X,M),L)\int_{0}^b \exp(y)y^{b(\mathbb{Q},(X,M),L)-1} (b-y)^{\dim (Z(1))}\d y.
\end{align*}

Thus we have shown the equality
\begin{align}
	I(B,\sigma)= &\frac{\# \Pic(X,M)_{\mathrm{torsion}}I(\sigma)\alpha_{\mathrm{Peyre}}((X,M),L)\Volume(Z(1))}{\# \Pic(X,M(\sigma))_{\mathrm{torsion}}}\\ & \times\int_{0}^b \exp(y)y^{b(\mathbb{Q},(X,M),L)-1} (b-y)^{\dim(Z_\sigma)}\d y.
\end{align}

We will now determine the main term in the integral.
\begin{proposition}
	For all $r,s\in \mathbb{N}$ and $b\in (1,\infty)$,
	$$\int_0^b \exp(y) y^r (b-y)^s\d y= s! \exp(b) b^{r}+O(\exp(b)b^{r-1})$$
	as $b\rightarrow \infty$.
\end{proposition}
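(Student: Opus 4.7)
The plan is to reduce to a standard incomplete-Gamma asymptotic by the substitution $u = b - y$. This converts the integral into
\[
\int_0^b e^y y^r (b-y)^s \, dy = e^b \int_0^b e^{-u} (b-u)^r u^s \, du,
\]
pulling out the exponential growth in front. Next I would expand $(b-u)^r$ by the binomial theorem and interchange sum and integral to obtain
\[
e^b \sum_{k=0}^{r} \binom{r}{k}(-1)^k b^{r-k} \int_0^b e^{-u} u^{s+k} \, du.
\]

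Each inner integral is an incomplete Gamma evaluation; integrating by parts (or using $\Gamma(n+1, b) = n!\,e^{-b} \sum_{j=0}^n b^j/j!$) gives
\[
\int_0^b e^{-u} u^{s+k}\,du = (s+k)! + O(e^{-b} b^{s+k})
\]
as $b \to \infty$. Substituting this estimate back, the $k=0$ term contributes $s!\, e^b b^r$, which is the asserted main term. For $1 \le k \le r$, the polynomial factor $b^{r-k}$ ensures the contribution is $O(e^b b^{r-1})$, and summing the incomplete-Gamma error terms produces an overall error of size $O(b^{r+s})$, which is absorbed into $O(e^b b^{r-1})$ thanks to the exponential growth of $e^b$. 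Combining these gives the claimed asymptotic.

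There is no real obstacle here: the argument is routine once one notices that the natural change of variables concentrates the mass of the integrand near $y = b$, and the only care required is in bookkeeping the binomial error terms and verifying that the incomplete-Gamma tail is exponentially small. The edge cases $r = 0$ or $s = 0$ fit into the same framework without modification.
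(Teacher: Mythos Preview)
Your argument is correct. The substitution $u=b-y$, the binomial expansion of $(b-u)^r$, and the incomplete-Gamma estimate $\int_0^b e^{-u}u^{n}\,du=n!+O(e^{-b}b^{n})$ combine exactly as you describe; the $k=0$ summand yields $s!\,e^b b^r$, the terms with $k\ge 1$ are $O(e^b b^{r-1})$, and the accumulated tail errors are polynomial in $b$ and hence negligible against $e^b b^{r-1}$. The edge case $r=0$ indeed causes no trouble, since $O(b^{s})$ is still $O(e^b b^{-1})$.

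The paper takes a different route: it sets $I(r,s,b)=\int_0^b e^y y^r(b-y)^s\,dy$, computes the base cases $I(r,0,b)$ and $I(0,s,b)$ explicitly, and then integrates by parts once to obtain the recursion $I(r,s,b)=sI(r,s-1,b)-rI(r-1,s,b)$, from which the asymptotic follows by a double induction on $(r,s)$. Your approach is arguably more direct, since it produces the full asymptotic in one pass without induction and makes transparent why the main term is exactly $s!\,e^b b^r$ (it is the $k=0$ contribution paired with $\Gamma(s+1)$). The paper's recursion, on the other hand, avoids appealing to incomplete-Gamma asymptotics and keeps everything in terms of elementary antiderivatives. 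Both arguments are short and of comparable difficulty.
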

\begin{proof}
	Let $$I(r,s,b):=\int_0^b \exp(y) y^r (b-y)^s\d y.$$
	If either $r$ or $s$ is zero, the integral is given by
	$$I(r,0,b)=\exp(b) \sum_{k=0}^r (-1)^k \frac{r!}{(r-k)!}b^{r-k}+(-1)^{r+1}r!,$$
	or
	$$I(0,s,b)=\exp(b)s!-\sum_{i=0}^s \frac{s!}{(s-i)!} b^{s-i}.$$
	Further integrating by parts, we obtain
	$I(r,s,b)=sI(r,s-1,b)-rI(r-1,s,b)$ if $r,s\geq 1$, which directly implies the result.
\end{proof}
Therefore we find
\begin{multline}
	I(B,\sigma)=\frac{\# \Pic(X,M)_{\mathrm{torsion}}I(\sigma)\alpha_{\mathrm{Peyre}}((X,M),L)\Volume(Z_\sigma)\dim(Z_\sigma)!}{\# \Pic(X,M(\sigma))_{\mathrm{torsion}}} \\ \times B(\log B)^{b(\mathbb{Q},(X,M),L)-1}+O(B(\log B)^{b(\mathbb{Q},(X,M),L)-2}).
\end{multline}
To finish the proof of Theorem \ref{theorem: full asymptotics} for toric adjoint rigid divisors, all that remains is to show that $\int_{D(B,\sigma)}\d \bfx\sim I(B,\sigma)$.
\begin{lemma} \label{lemma: reduction to I(B,sigma)}
	For every maximal cone $\sigma\in \Sigma_{\overline{M}}$,
	$$\int_{D(B,\sigma)} \d \bfx=I(B,\sigma)+O(B(\log B)^{b(\mathbb{Q},(X,M),L)-2})$$ as $B\rightarrow \infty$, so \eqref{eq: subdivide integral D(B) over cones} implies
	$$I(B)=\sum_{\sigma\in \Sigma_{\overline{M}}}I(B,\sigma)+O(B(\log B)^{b(\mathbb{Q},(X,M),L)-2})$$
	as $B\rightarrow \infty$.
\end{lemma}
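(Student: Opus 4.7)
My plan is to bound the difference $I(B,\sigma)-\int_{D(B,\sigma)}\d\bfx$. Subtracting \eqref{eq: I(B,sigma) in terms of F(B)} from \eqref{eq: full integral F(B)} expresses this difference as
$$\int_{F(B)}\bfx^{-D_{(X,M)}}\Bigl[1-\prod_{i=1}^{d_1}(1-1/x_{r+i})\Bigr]\frac{\d x_1}{x_1}\cdots\frac{\d x_r}{x_r}.$$
Because every $x_{r+i}\geq 1$ on $F(B)$, the bracketed factor is bounded above by $\sum_{i=1}^{d_1}1/x_{r+i}$, so the task reduces to proving that each
$$J_i:=\int_{F(B)}\bfx^{-D_{(X,M)}}x_{r+i}^{-1}\frac{\d x_1}{x_1}\cdots\frac{\d x_r}{x_r}, \qquad i=1,\dots,d_1,$$
satisfies $J_i=O(B(\log B)^{b(\mathbb{Q},(X,M),L)-2})$; summing these bounds and combining with \eqref{eq: subdivide integral D(B) over cones} will give both displays of the lemma.

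For each $J_i$ I will repeat the analytic reduction already performed for $I(B,\sigma)$: pass to $V=\Hom(\Pic(X,M(\sigma)),\mathbb{R})$ via $\psi$, split $V=V'\times V''$, and integrate out the $V''$-variables first. Since $\tilde{D}_{r+i}$ lives in $V'$ for $i\leq d_1$, the inner $V''$-integral is unchanged and the outer integral becomes a constant multiple of
$$\int_{E_{\log B}\cap V'}\exp\bigl(\lambda(\varphi)-\varphi([\tilde{D}_{r+i}])\bigr)(\log B-\lambda(\varphi))^{\dim Z_\sigma}\,\d\nu',$$
where $\lambda(\varphi)=\varphi(-K_{(X,M)})$; the only change from $I(B,\sigma)$ is the extra damping factor $\exp(-\varphi([\tilde{D}_{r+i}]))$. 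The hard part is to show that this factor eats exactly one power of $\log B$ in the final estimate.

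I expect this to follow from two ingredients. First, I will verify that $[\tilde{D}_{r+i}]\neq 0$ in $\Pic(X,M)$: we have reduced to $M=M^\circ$ and $a((X,M),L)=1$, so the adjoint divisor $\pr^*_M D+D_{(X,M)}$ is the zero divisor, and the (toric) adjoint rigidity hypothesis guarantees that no other effective (torus-invariant) divisor is $\mathbb{Q}$-linearly equivalent to $0$; since $\tilde{D}_{r+i}$ is a nonzero effective torus-invariant divisor, its class must be nonzero. Consequently $\psi\mapsto\psi([\tilde{D}_{r+i}])$ is a nontrivial nonnegative linear functional on the compact unit slice $F_1:=\{\psi\in\Hom_{\geq 0}(\Pic(X,M),\mathbb{R})\mid\lambda(\psi)=1\}$, and a standard Laplace-type bound for integrals of $e^{-y f}$ against a polytope with $f\geq 0$, $f\not\equiv 0$ yields
$$\int_{F_1}\exp\bigl(-y\psi([\tilde{D}_{r+i}])\bigr)\,\d\nu_{F_1}(\psi)=O(1/y) \quad \text{as } y\to\infty.$$
Second, fibering the outer integral along $\lambda$---the fibre over $y$ being the scaling $yF_1$, whose $(\beta-1)$-dimensional Lebesgue volume is proportional to $y^{\beta-1}$ with $\beta:=b(\mathbb{Q},(X,M),L)$---produces a fibre integral of size $O(y^{\beta-2})$ for $y\geq 1$. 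Plugging this bound back and applying the polynomial-times-exponential estimate proved earlier in the excerpt (with $r=\beta-2$, $s=\dim Z_\sigma$) gives $J_i\ll e^{\log B}(\log B)^{\beta-2}=B(\log B)^{\beta-2}$. Summing over $i\in\{1,\dots,d_1\}$ and $\sigma\in\Sigma_{\overline{M},\max}$ concludes the proof.
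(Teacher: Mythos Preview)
Your reduction to the integrals $J_i$ is exactly the paper's first move (the paper calls them $R_i$), and your bound $J_i=O(B(\log B)^{\beta-2})$ is correct. The argument you give for $[\tilde{D}_{r+i}]\neq 0$ in $\Pic(X,M)_{\mathbb{R}}$ is the right one (toric rigidity of $0$ forces the effective cone to be strongly convex, so no generator class is torsion), and the Laplace bound $\int_{F_1}e^{-yf}\,d\nu_{F_1}=O(1/y)$ for a nonnegative, non-identically-zero linear $f$ on a polytope is standard. One small point: when you invoke the exponential--polynomial estimate with exponent $r=\beta-2$, you should note separately that the contribution from $y\in[0,1]$ is $O(b^{\dim Z_\sigma})$, which is harmless, and that the case $\beta=1$ (where $F_1$ is a point and the damping is exponential rather than $O(1/y)$) still fits under the stated bound.

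The paper takes a different, more structural route at this point. Instead of estimating $J_i$ analytically, it observes that
\[
J_i=\int_{F(B)}\bfx^{-D_{(X,M')}}\frac{\d x_1}{x_1}\cdots\frac{\d x_r}{x_r}
\]
for the pair $(X,M')$ with $\fM'=\fM^\circ\setminus\{\bfm_{r+i}\}$, so $J_i$ is literally the same kind of integral as $I(B,\sigma)$ but for a smaller pair. The asymptotic already computed for $I(B,\sigma)$ then gives $J_i=O(B(\log B)^{b(\mathbb{Q},(X,M'),L)-1})$, and the key point becomes the combinatorial identity $b(\mathbb{Q},(X,M'),L)=b(\mathbb{Q},(X,M),L)-1$, which holds because $[\tilde{D}_{r+i}]$ does not lie on the minimal face of $\Eff^1(X,M)$ containing the adjoint class. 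Your nonvanishing observation $[\tilde{D}_{r+i}]\neq 0$ is the same fact in disguise, so the two arguments rest on the same geometric input; the paper's version has the advantage of reusing the existing asymptotic wholesale and making the drop in the $b$-invariant explicit, while your direct Laplace estimate avoids introducing the auxiliary pair and is more self-contained analytically.
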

\begin{proof}
	By \eqref{eq: full integral F(B)}, it suffices to show
	$$R_i=\int_{F(B)} \bfx^{-D_{(X,M)}}/x_i \frac{\d x_1}{x_1}\dots \frac{\d x_r}{x_r}= O(B(\log B)^{b(\mathbb{Q},(X,M),L)-2})$$
	as $B\rightarrow \infty$, for any $i=r+1,\dots, r+d_1$.
	
	Let $(X,M')$ be the pair given by $\fM'=\fM^\circ\setminus \{\bfm\}$, where $\bfm$ is the element corresponding to the coordinate $x_i$. Then $R_i=\int_{F(B)} \bfx^{-D_{(X,M')}} \frac{\d x_1}{x_1}\dots \frac{\d x_r}{x_r}$. We can estimate this integral in exactly the same way as in the computation of the asymptotic growth of $I(B,\sigma)$, to get
	$$R_i=O(B(\log B)^{b(\mathbb{Q},(X,M'),L)-1})$$
	as $B\rightarrow \infty$. Now since $[\tilde{D}_{i}]$ does not lie on the minimal face of $\Eff^1(X,M)$ containing $\pr^*_M L+K_{(X,M)}$, we have $b(\mathbb{Q},(X,M'),L)=b(\mathbb{Q},(X,M),L)-1$, which gives the result.
\end{proof}

Putting everything together, we conclude that the polynomial $Q$ satisfies
$$Q(\log B)=2^{\dim X} C_0 \tilde{C} (\log B)^{b(\mathbb{Q},(X,M),L)-1} +O((\log B)^{b(\mathbb{Q},(X,M),L)-2}),$$

where \begin{align*}
	C_0=&\prod_{\substack{p \, \mathrm{prime}\\ p\mid S}}(1-p^{-1})^{\#\Gamma_{M^\circ}} \sum_{\bfm\in \fM_{\red}}p^{-a_{\bfm}}\\
	\times &\prod_{\substack{p \, \mathrm{prime}\\ p\nmid S}}(1-p^{-1})^{\#\Gamma_{M^\circ}} \sum_{\bfm\in \mathbb{N}^n_{\red}}p^{-a_{\bfm}},
\end{align*}
and
$$\tilde{C}=\alpha_{\mathrm{Peyre}}((X,M),L)\sum_{\sigma \in \Sigma_{\overline{M},\max}} I(\sigma)C_\infty(\sigma),$$
where 
$$C_\infty(\sigma)=\frac{\Pic(X,M)_{\mathrm{torsion}}}{\Pic(X,M(\sigma))_{\mathrm{torsion}}}\Volume(Z_\sigma)\dim(Z_\sigma)!.$$
This finishes the proof of the toric adjoint rigid case of the theorem.

\textbf{The leading constant in the adjoint rigid case.}
From now on we will assume that $L$ is adjoint rigid with respect to $(X,M)$. As this implies that $L$ is toric adjoint rigid with respect to $(X,M)$, we can use the expression we derived for the leading coefficient of $Q$. We have already seen in Proposition \ref{prop: C0} that $\#\Gamma_{M^\circ}=\dim(X)+b(\mathbb{Q},(X,M),L)$. Therefore, all that remains is to prove
$$\tilde{C}=\alpha_{\mathrm{Peyre}}((X,M),L)C_{\infty},$$
where $C_{\infty}$ is the constant given in Theorem \ref{theorem: full asymptotics} in the adjoint rigid case.
We start by computing the volume of the simplex $Z_\sigma$.
\begin{proposition}
	The simplex $Z_\sigma$ has dimension $d-d_1$ and its volume is given by
	$$\Volume(Z_\sigma)=\frac{\Pic(X,M(\sigma))_{\mathrm{torsion}}}{(d-d_1)!\# \Pic(X,M)_{\mathrm{torsion}}\prod_{i=r+d_1+1}^{r+d} \tilde{a}_i},$$
	where we recall that $\tilde{a}_i$ is the coefficient of $\tilde{D}_i$ in $\pr^*_{M(\sigma)} L$.
\end{proposition}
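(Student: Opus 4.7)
The plan is to reduce the computation to an explicit volume of a standard simplex, together with an index computation coming from comparing lattices via a short exact sequence of Picard groups. Let $H := \langle [\tilde{D}_{r+d_1+1}], \dots, [\tilde{D}_{r+d}]\rangle \subset \Pic(X,M(\sigma))$, so that by definition $V'' = \Hom(H,\mathbb{R})$. The first observation is that using the presentations of $\Pic(X,M(\sigma))$ and $\Pic(X,M)$ given by Proposition~\ref{prop: Picard group toric pair}, the restriction map $\Pic(X,M(\sigma)) \to \Pic(X,M)$ is induced by the projection $\Div_T(X,M(\sigma)) = \mathbb{Z}^{r+d} \twoheadrightarrow \mathbb{Z}^{r+d_1} = \Div_T(X,M)$ forgetting the last $d - d_1$ coordinates, and a direct computation shows its kernel is precisely $H$. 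This yields a short exact sequence
\[
0 \to H \to \Pic(X,M(\sigma)) \to \Pic(X,M) \to 0.
\]

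Second, I would establish that $\rank H = d - d_1$. Since $(X, M(\sigma))$ contains the maximal cone $\sigma$ among its fan $\Sigma_{M(\sigma)}$, the rays of $\Sigma_{M(\sigma)}$ span $N_{\mathbb{Q}}$, so Proposition~\ref{prop: Picard group toric pair} gives an injection $N^\vee \hookrightarrow \Div_T(X,M(\sigma))$ and thus $\rank \Pic(X,M(\sigma)) = r$. Quasi-properness of $(X,M)$ together with toric adjoint rigidity of $L$ forces $N_M$ to have full rank $d$ in $N$, which yields $\rank \Pic(X,M) = r + d_1 - d$. The exact sequence then gives $\rank H = r - (r + d_1 - d) = d - d_1$. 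Since $H$ is generated by $d - d_1$ elements and has rank $d - d_1$, it is automatically free, and the classes $[\tilde{D}_{r+d_1+1}], \dots, [\tilde{D}_{r+d}]$ form a $\mathbb{Z}$-basis. Consequently $V'' \cong \mathbb{R}^{d-d_1}$, the dual classes form a $\mathbb{Z}$-basis of $\Hom(H,\mathbb{Z})$, and $Z_\sigma$ is the standard axis-aligned simplex with vertices $0$ and $\tilde{a}_i^{-1} \hat{e}_i$. Its Lebesgue volume with respect to the lattice $\Hom(H,\mathbb{Z})$ is the elementary quantity $\frac{1}{(d - d_1)! \prod_{i=r+d_1+1}^{r+d} \tilde{a}_i}$.

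Third, one needs to relate the lattice $\Lambda''$ used to normalize $\nu''$ to $\Hom(H,\mathbb{Z})$. Applying $\Hom(-,\mathbb{Z})$ to the short exact sequence above, and using that $H$ is free (so $\Ext^1(H,\mathbb{Z}) = 0$), yields
\[
0 \to \Pic(X,M)^\vee \to \Pic(X,M(\sigma))^\vee \to \Hom(H,\mathbb{Z}) \to \Pic(X,M)_{\mathrm{tor}} \to \Pic(X,M(\sigma))_{\mathrm{tor}} \to 0.
\]
The image of the middle map is $\Lambda''$ by definition, so the cokernel $\Hom(H,\mathbb{Z})/\Lambda''$ sits in a short exact sequence with the torsion subgroups. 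Taking orders gives
\[
[\Hom(H,\mathbb{Z}) : \Lambda''] = \frac{\#\Pic(X,M)_{\mathrm{tor}}}{\#\Pic(X,M(\sigma))_{\mathrm{tor}}},
\]
so $\nu''$ is $\frac{\#\Pic(X,M(\sigma))_{\mathrm{tor}}}{\#\Pic(X,M)_{\mathrm{tor}}}$ times the $\Hom(H,\mathbb{Z})$-Lebesgue measure. Multiplying this correction factor by the Lebesgue volume from the previous step yields the desired formula.

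The main obstacle is the rank computation in the second step, specifically the claim that $\rank N_M = d$ in the setting of toric adjoint rigid, quasi-proper pairs; once this is established, the exactness of the Picard group sequence combined with standard dimension count settles Step 2, and Steps 3 and 4 are straightforward homological bookkeeping and an elementary simplex volume. In writing it up, I would state the rank claim as a preliminary lemma, relying on Proposition~\ref{prop: Picard group toric pair} together with the characterization of toric rigidity proven earlier in this section.
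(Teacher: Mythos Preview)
Your approach is essentially the same as the paper's: both identify $H$ as the kernel of the restriction $\Pic(X,M(\sigma)) \to \Pic(X,M)$, argue it is free of rank $d-d_1$, compute the standard-simplex volume, and correct by a lattice index equal to the ratio of torsion orders. Your use of the $\Ext$ long exact sequence is a clean systematic replacement for the paper's more ad hoc computation of the index via the kernel of $\Pic(X,M(\sigma))/\{\mathrm{tors}\} \to \Pic(X,M)/\{\mathrm{tors}\}$; both yield the same answer.

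There is one genuine gap. Your claimed preliminary lemma, that \emph{toric} adjoint rigidity together with quasi-properness forces $\rank N_M = d$, is false. Take $X = \mathbb{P}^1 \times \mathbb{P}^1$ and let $(X,M)$ be the pair corresponding to integral points on $\mathbb{G}_m \times \mathbb{P}^1$, discussed as an example in the paper. This pair is quasi-proper with respect to the ample class $L = [D_1] + [D_3]$ (the trivial pair gives the same Fujita invariant $a=2$), and the paper notes every big nef $L$ is toric adjoint rigid for it; yet $\Gamma_M = \{\mathbf{e}_3, \mathbf{e}_4\}$, so $N_M = \mathbb{Z}\,n_{\rho_3}$ has rank $1 < 2 = d$. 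The proposition you are proving sits in the section where \emph{full} adjoint rigidity (not just toric) is the standing hypothesis, and under that hypothesis the rank claim is indeed true: the paper proves in Proposition~\ref{prop: C0} that adjoint rigidity forces $N^\vee \hookrightarrow \Div_T(X,M)$ to be injective, via the observation that if some $\chi^\mu$ pulls back to zero then $\pr_M^*\div(c + \chi^\mu)$ is a nontrivial effective divisor linearly equivalent to $0$, contradicting rigidity (but \emph{not} toric rigidity, since this divisor is not torus-invariant). So your outline goes through once you replace ``toric adjoint rigid'' by ``adjoint rigid'' in the rank step and invoke that argument.
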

\begin{proof}
	By Proposition \ref{prop: characterisation adjoint rigid}, the $\mathbb{Q}$-divisor class $\pr_{\overline{M}}^* L+K_{(X,\overline{M})}$ is rigid, so the subgroup $G$ of $\Pic(X,M(\sigma))$ generated by the divisors classes $[\tilde{D}_{r+d_1+1}],\dots, [\tilde{D}_{r+d}]$ is a free abelian group of rank $d-d_1$. Therefore $V''\cong \mathbb{R}^{d-d_1}$, and $Z_\sigma\subset V''$ is a simplex with vertices $$(\tilde{a}^{-1}_{r+d_1+1},0,\dots,0),\dots, (0,\dots,0,\tilde{a}^{-1}_{r+d}).$$ Thus the volume of $Z_\sigma$ is given by $$\Volume(Z_\sigma)=\frac{\Volume([0,1]^{d-d_1})}{(d-d_1)!\prod_{i=r+d_1+1}^{r+d} \tilde{a}_i}.$$ The volume of the hypercube $[0,1]^{d-d_1}$ with respect to the measure $\nu''$ is the reciprocal of the order of the kernel of the quotient homomorphism $\Pic(X,M(\sigma))/\{\mathrm{torsion}\}\rightarrow \Pic(X,M)/\{\mathrm{torsion}\}$ induced by the restriction $\Div(X,M(\sigma))\rightarrow \Div(X,M)$. Since the kernel $G$ of $\Pic(X,M(\sigma))\rightarrow \Pic(X,M)$ is torsion-free, the volume of the hypercube is $\Volume([0,1]^{d-d_1})=\frac{\# \Pic(X,M(\sigma))_{\mathrm{torsion}}}{\# \Pic(X,M)_{\mathrm{torsion}}}$.
\end{proof}
The previous proposition implies
$$C_\infty(\sigma)= \prod_{\substack{\bfm\in \Gamma_{\overline{M}}\\ \rho_\bfm\in \sigma}}\frac{1}{a_\bfm}$$
so
$$\tilde{C}=\alpha_{\mathrm{Peyre}}((X,M),L)\sum_{\sigma \in \Sigma_{\overline{M},\max}} I(\sigma)\prod_{\substack{\bfm\in \Gamma_{\overline{M}}\\ \rho_\bfm\in \sigma}}\frac{1}{a_\bfm}.$$
The following proposition finishes the proof of Theorem \ref{theorem: full asymptotics}.

\begin{proposition}
	For every maximal cone $\sigma'\in \Sigma$, we have
	$$\sum_{\sigma\subset \sigma'} I(\sigma)\prod_{\substack{\bfm\in \Gamma_{\overline{M}}\\ \rho_\bfm\in \sigma}}\frac{1}{a_\bfm}=\prod_{\substack{i=1\\ \rho_i\subset \sigma'}}^n \frac{1}{a_i},$$
	where the sum runs over all maximal cones in $\Sigma_{\overline{M}}$ contained in $\sigma'$.
\end{proposition}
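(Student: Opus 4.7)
The plan is to interpret both sides as two computations of the same real integral over $\sigma'$. Let $\ell\in N^{\vee}_{\mathbb{R}}$ be the linear form with $\ell(n_{\rho_i})=a_i$ for every ray $\rho_i\subset \sigma'$, and set
\[
J:=\int_{\sigma'} e^{-\ell(x)}\d x,
\]
where $\d x$ denotes the Haar measure on $N_{\mathbb{R}}$ normalised so that $N$ has covolume $1$. Since $X$ is smooth and $\sigma'\in \Sigma$ is maximal, the ray generators $\{n_{\rho_i}:\rho_i\subset \sigma'\}$ form a $\mathbb{Z}$-basis of $N$. The change of variables $x=\sum_{i} t_i n_{\rho_i}$ has unit Jacobian and factorises $J$ as $\prod_{\rho_i\subset \sigma'}\int_0^\infty e^{-a_i t_i}\d t_i=\prod_{\rho_i\subset \sigma'} a_i^{-1}$, which is the right-hand side of the claimed identity.

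Next I would compute $J$ via the subdivision $\sigma'=\bigcup_{\sigma}\sigma$ provided by the maximal cones $\sigma\in \Sigma_{\overline{M}}$ contained in $\sigma'$, which tile $\sigma'$ up to boundaries of measure zero. Each such $\sigma$ is simplicial of dimension $d$ and has its $d$ rays carried by uniquely determined elements $\bfm_1,\ldots,\bfm_d$. The change of variables $x=\sum_i s_i \phi(\bfm_i)$ has Jacobian equal to $|\det M_\sigma|$, where $M_\sigma$ is the integer matrix whose entry in position $(i,j)$ is the $j$-th component $m_{i,j}$ of $\bfm_i$ (for $\rho_j\subset \sigma'$); moreover $\ell(\phi(\bfm_i))=\sum_j m_{i,j}a_j=a_{\bfm_i}$. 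Consequently,
\[
\int_\sigma e^{-\ell(x)}\d x=|\det M_\sigma|\prod_{i=1}^d \frac{1}{a_{\bfm_i}}.
\]

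The main step is then to identify $I(\sigma)=|\det M_\sigma|$. Using the presentation of $\Pic(X,M(\sigma))$ from Proposition \ref{prop: Picard group toric pair}, $I(\sigma)$ equals the index $[\Div_T(X,M(\sigma)):B+\im(N^\vee)]$, where $B$ is the free subgroup spanned by the $\tilde{D}_{\bfm}$ with $\phi(\bfm)\notin \sigma$. The quotient $\Div_T(X,M(\sigma))/B$ is freely generated by $\tilde{D}_{\bfm_1},\ldots,\tilde{D}_{\bfm_d}$, and with respect to the basis $\mu^j\in N^\vee$ dual to $n_{\rho_j}$ the image of $\mu^j$ is the vector $(m_{i,j})_{i=1}^{d}$, so the image of $N^\vee$ in $\mathbb{Z}^d$ is the column span of $M_\sigma$, which has index $|\det M_\sigma|$. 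Summing the two expressions for $\int_\sigma e^{-\ell(x)}\d x$ over all $\sigma\subset \sigma'$ then yields the desired identity.

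The only fiddly point I foresee is verifying that the $d$-tuples $\bfm_1,\ldots,\bfm_d$ carrying the rays of $\sigma$ agree whether we view them inside $\Gamma_{\overline{M}}$ or inside $\Gamma_{M(\sigma)}$, so that the Jacobian computation (using $\Gamma_{\overline{M}}$) lines up with the index computation for $I(\sigma)$ (using $\Gamma_{M(\sigma)}$). This is ensured by selecting the auxiliary integers $d_i$ appearing in the definitions of $\overline{M}$ and $M(\sigma)$ coherently, so that for each ray of $\sigma$ the same $\bfm$ occurs as its unique minimal generator in both monoids.
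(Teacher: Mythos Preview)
Your proposal is correct and follows essentially the same approach as the paper: both arguments interpret the identity as an exponential integral $\int_{\sigma'} e^{-\ell(x)}\d x$ computed two ways, once directly over $\sigma'$ and once via the subdivision into the maximal cones of $\Sigma_{\overline{M}}$, with the identification $I(\sigma)=|\det M_\sigma|$ obtained from the presentation of $\Pic(X,M(\sigma))$ in Proposition~\ref{prop: Picard group toric pair}. The paper cites \cite[Example~2.1]{Bar93} for the cone integral where you do the change of variables explicitly, and your fiddly point about the coherence of the $\bfm_i$'s in $\Gamma_{\overline{M}}$ versus $\Gamma_{M(\sigma)}$ is handled in the paper implicitly by using the same auxiliary integers $d_i$ in both definitions (Notation~\ref{notation: fan toric pair}); note also that any $\bfm\in\Gamma_{\overline{M}}$ with $\phi(\bfm)\in\sigma'$ is automatically supported on the rays of $\sigma'$ (since $\bfm\in\fM_{\red}$ forces its support to span a cone of $\Sigma$, whose relative interior contains $\phi(\bfm)$), so your matrix $M_\sigma$ really does have the entries $m_{i,j}$ as stated.
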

\begin{proof}
	Let $\sigma$ be a maximal cone in $\Sigma_{\overline{M}}$.
	The index $I(\sigma)$ is equal to the cardinality of the quotient of $\Pic(X,M(\sigma))$ by the divisors $\{D_{\bfm}\mid \bfm \in \Gamma_{M(\sigma)}, \phi(\bfm)\not \in \sigma\}$. We can view this quotient as the Picard group of the pair $(X,M_\sigma)$, where $$\fM_\sigma=\{(0,\dots,0)\}\cup \{\bfm\in \Gamma_{M(\sigma)}\mid \phi(\bfm) \in \sigma\}.$$
	
	Now Proposition \ref{prop: Picard group toric pair} implies that $\Pic(X,M_\sigma)$ is the cokernel of the homomorphism $N^\vee\rightarrow \Div_T(X,M_\sigma)$. As $\sigma$ is a cone of dimension $d$, this homomorphism is an embedding of lattices of the same rank. Thus $\#\Pic(X,M_\sigma)$ is the index of $\Div_T(X,M_\sigma)^\vee$ in $N$, where the embedding is given by the dual of the homomorphism. As the image of a divisor $\tilde{D}_\bfm$ in $N$ is $\phi(\bfm)$, this implies that $\Pic(X,M_\sigma)$ has $|N:N_{M_\sigma}|$ elements, where we recall that $N_{M_{\sigma}}$ is the lattice spanned by $\{\phi(\bfm) \mid\bfm\in \fM_\sigma\}$. We choose a basis of $N\cong \mathbb{Z}^d$ so that $\sigma'=[0,\infty)^d$ and we write $\Gamma_{M_\sigma}=\{\bfm_1,\dots,\bfm_d\}$. The set $\{\phi(\bfm) \mid\bfm\in \Gamma_{M_\sigma}\}$ is a basis of $N_{\mathbb{Q}}$, so $I(\sigma)=|N:N_{M_\sigma}|$ is equal to the absolute value of the determinant $\phi(\bfm_1)\wedge\dots \wedge \phi(\bfm_d)$.
	
	We will prove the desired identity by viewing both sides as an exponential integral over a cone. We order the divisors $D_1,\dots, D_n$ on $X$ such that $\rho_1,\dots,\rho_d\in \sigma'$.
	Note that
	$$\prod_{i=1}^d \frac{1}{a_i}= \int_{[0,\infty)^d} e^{-a_1x_1-\dots-a_dx_d}\d x_1\dots\d x_d.$$
	We split up the domain of integration into the maximal cones $\sigma\in \Sigma_{\overline{M}}$ contained in $\sigma'=[0,\infty)^d$: 
	$$ \int_{[0,\infty)^d} e^{-a_1x_1-\dots-a_dx_d}\d x_1\dots\d x_d=\sum_{\sigma\subset \sigma'} \int_{\sigma} e^{-a_1x_1-\dots-a_dx_d}\d x_1\dots\d x_d.$$
	Now the formula \cite[Example 2.1]{Bar93} for the exponential integral over a cone gives $$\int_{\sigma} e^{-a_1x_1-\dots-a_dx_d}\d x_1\dots\d x_d=|\phi(\bfm_1)\wedge\dots \wedge \phi(\bfm_d)| \prod_{i=1}^d \frac{1}{\langle \mathbf{a}, \phi(\bfm_i)\rangle}$$
	where $\mathbf{a}=(a_1,\dots,a_n)$. Since $\langle \mathbf{a}, \phi(\bfm_i)\rangle=a_{\bfm_i}$, this implies
	$$\int_{\sigma} e^{-a_1x_1-\dots-a_dx_d}\d x_1\dots\d x_d=I(\sigma)\prod_{\substack{\bfm\in \Gamma_{\overline{M}}\\ \rho_\bfm\in \sigma}}\frac{1}{a_\bfm},$$
	which implies the desired identity.
\end{proof}
\printbibliography
\end{document}